\documentclass[11pt,leqno]{amsart}

\usepackage{amsmath}
\usepackage{amssymb}
\usepackage{a4wide}
\usepackage{bbm}
\usepackage{graphics}
\usepackage{epsfig}
\usepackage{mathrsfs}
\usepackage[sans]{dsfont}

\newtheorem{definition}{Definition}[section]
\newtheorem{theorem}[definition]{Theorem}
\newtheorem{corollary}[definition]{Corollary}
\newtheorem{lemma}[definition]{Lemma}

\newtheorem{proposition}[definition]{Proposition}
\newtheorem{remark}[definition]{Remark}
\newtheorem{example}[definition]{Example}
\newtheorem{assumption}{Assumption}

\newcounter{hypo}
\newcounter{hypoa}

\newcounter{hypoaa}
\newcounter{hypobb}

\def\C{{\mathbb C}}

\def\N{{\mathbb N}} 
\def\R{{\mathbb R}} 

\def\Z{{\mathbb Z}}

\def\T{{\mathbb T}}
\def\S{{\mathbb S}}

\def\CC{\mathcal {C}}

\def\CE{\mathcal {E}}
\def\CF{\mathcal {F}}

\def\CI{{\mathcal I}}
\def\CJ{{\mathcal J}}

\def\CM{\mathcal {M}}

\def\CO{\mathcal {O}}
\def\CP{\mathcal {P}}
\def\CQ{\mathcal {Q}}

\def\CU{\mathcal {U}}
\def\CV{\mathcal {V}}

\def\SQ{\mathscr {Q}}

\def\ker{\mathop{\rm Ker}\nolimits}

\def\one{\mathds{1}}

\newcommand{\re}{\operatorname{Re}}
\newcommand{\im}{\operatorname{Im}}

\newcommand{\Id}{\operatorname{Id}}

\newcommand{\supp}{\operatorname{supp}}

\def\dist{\mathop{\rm dist}\nolimits}

\def\sgn{\mathop{\rm sgn}\nolimits}

\def\<{\langle}
\def\>{\rangle}

\def\rank{\mathop{\rm Rank}\nolimits}

\makeatletter
 \@addtoreset{equation}{section}
 \makeatother
 
\title{Semiclassical Schr\"{o}dinger operators with purely imaginary potential}
\author[V. Arnaiz, J.-F. Bony and  L. Michel]{Victor Arnaiz, Jean-Fran\c{c}ois Bony and Laurent Michel} 
\address{Universit\'e de Bordeaux, Institut Math\'ematique de Bordeaux}
\email{victor.arnaiz@math.u-bordeaux.fr}
\email{jean-francois.bony@math.u-bordeaux.fr}
\email{laurent.michel@math.u-bordeaux.fr}

\begin{document}

\begin{abstract}
We consider Schr\"odinger operators with purely imaginary potential $P = - h^{2} \Delta + i V ( x )$ on a bounded domain. Assuming that near its critical points the potential $V$ can be approximated by an homogeneous polynomial, we show that in the limit $h \to 0$ the leftmost eigenvalues of $P$ are asymptotically given by the local model associated to the most degenerated critical points of $V$. We give applications of this result to the associated evolution problem including shear flows in fluid mechanics.
\end{abstract}

\maketitle

\tableofcontents
\section{Introduction}

\subsection{Motivations}
The mathematical analysis of the low-lying eigenvalues of Schr\"odinger operators $P=-h^2\Delta+V(x)$ in the semiclassical limit goes back to the work of Combes--Duclos--Seiler \cite{CoDuSe83}, Simon \cite{Si83_01}. When the minima of the potential $V$ are non-degenerated, one can associate to each minimum a model operator with quadratic potential. Using the particular scaling of this quadratic model, this yields a first order approximation of the bottom of the spectrum of $P$. These asymptotics were next sharpened by Helffer--Sj\"ostrand \cite{HeSj84_01} using WKB methods, with also a study of the tunneling effect (see also the monographs \cite{DiSj99_01}, \cite{He88_01}). The proof of these results uses crucially the selfadjointness of $P$, through maxi-min principle or resolvent estimates.

When the potential $V$ is complex valued, the situation is drastically different since the operator becomes non-selfadjoint and the resolvent is not controlled by the distance to the spectrum. The simple example of complex harmonic oscillator $- \partial_{x}^{2} + i x^{2}$ on $\R$ studied in \cite{Da99_01} captures easily this phenomenon: the spectrum is made of the complex numbers $e^{i \pi / 4} ( 2 \N + 1 )$ and the norm of the resolvent verifies $\lim_{r \rightarrow + \infty} \Vert ( - \partial_{x}^{2} + i x^{2} - r e^{i \theta} )^{- 1} \Vert = + \infty$ for all $\vert \theta \vert < \frac{\pi}{2}$ (see also \cite{DeSjZw04_01} for a microlocal point of view and generalization of this result). The notion of pseudospectrum \cite{Tr97_01}, gives a general mathematical framework to this remark. Of course, this lack of control of the resolvent leads to serious complications if one aims to study more general Schr\"odinger operators. Another difficulty arises when one tries to go beyond the Morse assumption on the potential $V$ for at least two reasons. First, the normal form theory of degenerate critical points is much more complicated \cite{Ar76_01}. Moreover, in dimension at least $2$, the resulting Schr\"odinger operators have neither separated variables nor natural scaling in the general case. Hence, it seems natural to study first the case of homogeneous critical points. In the case of real valued potentials, this is done for instance in \cite{MaRo88_01}. The aim of this paper is to study the case of Schr\"odinger operators with purely imaginary potentials. Such operators appear in the literature in the study of supraconductivity (see \cite{Al08_01} and references therein), in the study of shear flows in fluid mechanics (see \cite{DrRe81_01} for an introduction), and also in sub-Riemannian control problems \cite{BeHeHeRo15_01}. We shall now introduce our precise framework.

\subsection{Framework and main result} \label{s4}

Consider the semiclassical Schr\"odinger operator 
\begin{equation} \label{a12}
P = - h^{2} \Delta + i V ( x ) ,
\end{equation}
acting on $L^{2} ( X )$, where $X$ is either the $d$-dimensional torus $\T^{d} = \R^{d} / 2 \pi \Z^{d}$ or a bounded open connected Lipschitz subset of $\R^{d}$, $d \geq 1$. In the latter case, we assume that $P$ satisfies the Dirichlet or Neumann boundary conditions at $\partial X$. In all the paper, we assume that $V$ is a $C^{1} ( \overline{X} )$ real valued potential. Under this assumption, the operator $P$ with domain $H^{2}( X )$, with boundary condition if $X \subset \R^{d}$, is maximal accretive. It has compact resolvent, its spectrum is made of finite multiplicity eigenvalues and
\begin{equation}
\sigma ( P ) \subset [ 0 , + \infty [ + i V ( \overline{X} ) .
\end{equation}
By analogy with the hypoelliptic theory, the leftmost eigenvalues are intuitively given by the critical points of $V$. In this paper, we will assume that these critical points are homogeneous degenerate of finite order. More precisely,

\begin{assumption}[Degenerate Morse]\sl \label{h1}
Let $E \in \R$. For all $c \in V^{-1} ( E ) \subset \overline{X}$, we have either

$i)$ $c$ is a regular point (i.e. $\nabla V ( c ) \neq 0$).

$ii)$ $c$ is a critical point (i.e. $\nabla V ( c ) = 0$). In that case, we assume $c \in X$ and
\begin{equation} \label{a4}
\nabla \big( V ( x ) - V_{c} ( x ) \big) = o \big( \vert x - c \vert^{\alpha_{c} - 1} \big) ,
\end{equation}
for $x$ near $c$, where $V_{c}$ can be written
\begin{equation} \label{a83}
V_{c} ( x ) = E + v_{c} ( \theta ) r^{\alpha_{c}} ,
\end{equation}
in the spherical coordinates $( r = \vert x - c \vert , \theta ) \in \R^{+} \times \S^{d -1}$ centered at $c$, with $\alpha_c>1$ and $v_c\in C^{1} ( \S^{d - 1} )$. Moreover, we make the non-degeneracy assumption
\begin{equation} \label{a81}
\forall x \in \R^{d} \setminus \{ c \} , \quad \nabla_{x} V_{c} ( x ) \neq 0 .
\end{equation}
\end{assumption}

It follows immediately from this assumption that, near a critical point $c \in X$, we have
\begin{equation} \label{a57}
V ( x ) = V_{c} ( x ) + o ( \vert x - c \vert^{\alpha_{c}} ) ,
\end{equation}
with $V_c$ homogeneous of degree $\alpha_{c}$. Nevertheless, \eqref{a57} does not imply \eqref{a4}. The assumptions $\alpha_{c} > 1$ and $v_{c} \in C^{1} ( \S^{d- 1 } )$ guarantee that the potential $V_{c} ( x )$ is always in $C^{1} ( \R^{d} )$, but \eqref{a4} does not imply that $V ( x )$ is better than $C^{1}$ near $c$ even for large $\alpha_{c}$ and $v_{c} \in C^{\infty} ( \S^{d - 1 } )$. When $V \in C^{\infty} ( \overline{X} )$, we have $\alpha_{c} \in \N$, $\alpha_{c} \geq 2$ and $V_{c}$ is the first non-zero homogeneous part in the Taylor expansion of $V$ at $c$.

\begin{remark}\sl \label{a85}
For $g \in C^{1} ( \R^{d} )$ and $x = r \theta \neq 0$, we have $\nabla_{x} g ( x ) = 0$ iff $\partial_{r} g ( r \theta ) = 0$ and $\nabla_{\theta} g ( r \theta )= 0$. Then, since $\partial_{r} V_{c} = \alpha_{c} r^{\alpha_{c} - 1} v_{c} ( \theta )$ and $\nabla_{\theta} V_{c} = r^{\alpha_{c}} \nabla_{\theta} v_{c} ( \theta )$, the non-degeneracy assumption \eqref{a81} is equivalent in dimension $d \geq 2$ to
\begin{equation} \label{a82}
\forall \theta \in \S^{d - 1} , \quad v_{c} ( \theta ) = 0 \quad \Longrightarrow \quad \nabla_{\theta} v_{c} ( \theta ) \neq 0 .
\end{equation}
\end{remark}

Depending on the setting, it may be more appropriate to check \eqref{a81} or \eqref{a82} to verify Assumption~\ref{h1}. By homogeneity, \eqref{a81} implies $\vert \nabla V_{c} ( x ) \vert \gtrsim \vert x - c \vert^{\alpha_{c} - 1}$. Combined with \eqref{a4}, it shows that the critical points in $V^{- 1} ( E )$ are isolated under Assumption~\ref{h1}. Since $\overline{X}$ is compact, the set of critical points of energy $E$ is finite. Moreover, the energies close to $E$ have only regular points. Then, \eqref{a81} is not a technical assumption, but needed for the results below (see also Example \ref{a93}).

\begin{example}\rm
In dimension $d = 1$, $\S^{d - 1} = \{ + , - \}$ and \eqref{a83} writes
\begin{equation} \label{a84}
V_{c} ( x ) = E + \left\{ \begin{aligned}
&v_{c}^{+} \vert x - c \vert^{\alpha_{c}}   &\text{for } x \geq c , \\
&v_{c}^{-} \vert x - c \vert^{\alpha_{c}}   &\text{for } x < c  ,
\end{aligned} \right.
\end{equation}
for some $\alpha_{c} > 1$ and $v_{c}^{\pm} \in \R^{*}$. In dimension $1$, the present proof allows to treat the critical points at the boundary of $X$. When $c \in \partial X$, we assume $V_{c} ( x ) = E + v_{c} \vert x - c \vert^{\alpha_{c}}$ for some $\alpha_{c} > 1$ and $v_{c} \in \R^{*}$.
\end{example}

Under \eqref{a82}, the zeros of $v_{c}$ form a finite number of non-intersecting closed hypersurfaces of $\S^{d - 1}$. Moreover, $v_{c}$ changes sign at each of these hypersurfaces. By \eqref{a57}, the critical point $c$ is isolated in $V^{- 1} ( E )$ in dimension $d \geq 2$ iff $v_{c}$ is globally positive or globally negative.

\begin{example}\rm
In $X = B ( 0 , 1 ) \subset \R^{2}$, we consider the potential
\begin{equation*}
V ( x ) = \sin ( 4 t ) r^{2} = 4 \frac{x_{1}^{3} x_{2} - x_{1} x_{2}^{3}}{x_{1}^{2} + x_{2}^{2}} ,
\end{equation*}
in the variables $r \theta = r ( \cos ( t ) , \sin ( t ) )$. It has a single critical point $c = 0$ and $V_{c} = V$. Since $v_{c} ( \theta ) = \sin ( 4 t ) = 4 \theta_{1}^{3} \theta_{2} - 4 \theta_{1} \theta_{2}^{3}$ is non-degenerate, this potential satisfies Assumption~\ref{h1}. Note that $v_{c}$ vanishes $8$ times in $\S^{1}$.
\end{example}

By the max-min principle, the asymptotic of the low-lying eigenvalues of the selfadjoint operator $P = - h^{2} \Delta + V ( x )$ requires only an equivalent of $V$ near its global minimum. Thus, $V \in L^{2}_{\rm loc}$ and \eqref{a57} (with $v_{c} > 0$) is enough. For purely imaginary potential, we require that $V \in C^{1}$ and \eqref{a4}. This is natural since the leftmost eigenvalues are now given by all the critical points of $V$ (and not just the global minima) and \eqref{a57} does not exclude that $V$ has an infinite number of critical points (of order greater than $\alpha_{c}$) in any neighborhood of $c$. See Example \ref{b31} for such a situation. In other words, the shape of the potential is not sufficient to get the asymptotic of the small eigenvalues unlike the selfadjoint case.

Let us denote by
\begin{equation*}
\CC ( E )  = \{ c \in X ; \ V ( c ) = E \text{ and } \nabla V ( c ) = 0 \} ,
\end{equation*}
the set of critical points of $V$ at energy $E$. The idea is that the eigenvalues closest to the imaginary axis near $i E$ are given by the most degenerate critical points. Then, we define $\alpha = \alpha ( E ) = \max \{ \alpha_{c} ; \ c \in \CC ( E ) \}$ the maximal order of vanishing and the set $\CC_{\max} ( E ) = \{ c \in \CC ( E ) ; \ \alpha_{c} = \alpha ( E ) \}$ of critical points of maximal vanishing at energy $E$. We also denote
\begin{equation} \label{a19}
\beta = \frac{2}{\alpha+ 2} \qquad \text{ and } \qquad \sigma = \alpha \beta = \frac{2 \alpha}{\alpha + 2} .
\end{equation}
Roughly speaking, $\beta$ is the scaling order, whereas $\sigma$ corresponds to the order of the first eigenvalues.

For $c \in \CC_{\max} ( E )$, the leading part of $P$ near $c$ is defined by
\begin{equation} \label{a48}
P_{c} = - h^{2} \Delta  + i V_{c} ( x ) ,
\end{equation}
on $\R^{d}$. If $c \in \partial X$ in dimension $d = 1$, the operator $P_{c}$ acts on $[ c , + \infty [$ or $] - \infty , c ]$ depending on the situation with the same boundary condition than $P$ at $c$. Using the conjugation operator $U_{c}$, which is the isometry on $L^{2} ( \R^{d} )$ defined by
\begin{equation} \label{a5}
U_{c} ( u ) ( x ) = h^{\frac{d \beta}{2}} u ( c + h^{\beta} x ) ,
\end{equation}
a direct computation gives
\begin{equation} \label{a32}
U_{c} P_{c} U_{c}^{*} = h^{\sigma} \CP_{c} + i E ,
\end{equation}
where $\CP_{c}$ is the model operator acting on $L^{2} ( \R^{d} )$
\begin{equation} \label{b49}
\CP_{c} = - \Delta + i \CV_{c} ( x )  \qquad \text{ with } \qquad \CV_{c} ( x ) = V_{c} ( c + x ) = v_{c} ( \theta ) r^{\alpha} ,
\end{equation}
in the spherical coordinates $x = r \theta$. If $c \in \partial X$ in $1$D, $\CP_{c}$ is defined on $\R^{\pm}$ with the same boundary condition at $0$ than $P$ at $c$. The spectral properties of this operators are described in details in Section \ref{s2}. To state our main result, we only need the following proposition which is a consequence of Propositions \ref{a46} and \ref{a47}.

\begin{proposition}\sl \label{a6}
The operator $\CP_{c}$  with domain $C^{\infty}_{0} ( \R^{d} )$ is closable. The spectrum of its closed extension, still denoted $\CP_{c}$, consists of a sequence (possibly empty or finite) of eigenvalues of finite multiplicity whose real part is positive and goes to $+ \infty$ (in the case of infinitely many eigenvalues). In the sequel, we denote
\begin{equation*}
\Lambda_{c} = \sigma ( \CP_{c} ) .
\end{equation*}
\end{proposition}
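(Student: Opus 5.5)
The plan is to show that $\CP_{c} = - \Delta + i \CV_{c}$ has compact resolvent. This gives discreteness and finite multiplicity. Positivity of the real parts then comes from accretivity plus an argument excluding purely imaginary eigenvalues.

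\emph{Closability and accretivity.} For $u \in C^{\infty}_{0} ( \R^{d} )$ we have
\begin{equation*}
\re \< \CP_{c} u , u \> = \Vert \nabla u \Vert^{2} \geq 0 ,
\end{equation*}
because $\CV_{c}$ is real. So $\CP_{c}$ is accretive, and accretive operators are closable. Its formal adjoint $- \Delta - i \CV_{c}$ is accretive as well, so the closure is maximal accretive: if $( \CP_{c} + 1 ) u = 0$ in the distributional sense with $u \in L^{2}$, then local elliptic regularity and a cutoff argument give $u = 0$. Hence $\{ \re z < 0 \}$ lies in the resolvent set, with $\Vert ( \CP_{c} - z )^{- 1} \Vert \leq ( - \re z )^{- 1}$ there.

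\emph{Compact resolvent (main step).} I expect this to be the main obstacle. The real part only controls $\nabla u$, and $\CV_{c}$ vanishes on the cone $\{ v_{c} ( \theta ) = 0 \}$. So I need a weighted bound of the form
\begin{equation*}
\Vert \< x \>^{\gamma} u \Vert + \Vert \nabla u \Vert \lesssim \Vert \CP_{c} u \Vert + \Vert u \Vert
\end{equation*}
for some $\gamma > 0$. Compactness then follows from Rellich's theorem. To obtain this bound I would use the non-degeneracy \eqref{a81}: by homogeneity, $\vert \nabla \CV_{c} ( x ) \vert \gtrsim \vert x \vert^{\alpha - 1}$. The standard technique for such ``imaginary potential'' estimates is a multiplier or commutator argument. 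One computes $\im \< \CP_{c} u , \phi ( \CV_{c} ) u \>$ with $\phi$ a bounded regularization of the sign function, for instance $\phi ( s ) = s / \< s \>$ rescaled on a dyadic region $\vert x \vert \sim R$. This yields $\< \vert \CV_{c} \vert \, \phi' \ldots \>$-type control, and the commutator terms are $\< \nabla \phi ( \CV_{c} ) \cdot \nabla u , u \>$, bounded by $\Vert \nabla u \Vert$ times $\vert \nabla \CV_{c} \vert / \lambda$. Near the zero cone, the gradient lower bound gives control via a one-dimensional uncertainty principle in the direction transverse to the cone: on $\{ \vert x \vert \sim R \}$, the operator $- \partial_{t}^{2} + i R^{\alpha - 1} t$ has a resolvent controlled at scale $R^{- ( \alpha - 1 ) / 3}$. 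This should give $\Vert R^{2 ( \alpha - 1 ) / 3} u \Vert_{\vert x \vert \sim R} \lesssim \Vert \CP_{c} u \Vert + \Vert u \Vert$ after a partition of unity in dyadic shells. Summing over shells produces $\gamma = 2 ( \alpha - 1 ) / 3 > 0$, since $\alpha > 1$. If the direct dyadic approach is too messy, a fallback would be to adapt the known results for $- \Delta + i V$ with $\vert \nabla V \vert \to \infty$ (Almog-type estimates), localized on each shell with rescaling by homogeneity.

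\emph{Conclusion.} With compact resolvent, the spectrum is a discrete set of finite-multiplicity eigenvalues; it can be empty, as in the Airy-type case $d = 1$, $\alpha_{c} = 1$-like situations. Since $\re \< \CP_{c} u , u \> = \Vert \nabla u \Vert^{2}$, any eigenvalue $\lambda$ with eigenvector $u$ satisfies $\re \lambda \, \Vert u \Vert^{2} = \Vert \nabla u \Vert^{2}$. The case $\re \lambda = 0$ would force $\nabla u = 0$, so $u$ is constant and hence $0$ in $L^{2}$. Therefore $\re \lambda > 0$. Finally, to see that $\re \lambda \to + \infty$ along an infinite sequence, I would use that the resolvent estimate above holds uniformly on the strips $\{ \re z \leq M \}$ for $\vert \im z \vert$ large. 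Replacing $\CV_{c}$ by $\CV_{c} - \im z$ and running the same multiplier argument, with the critical cone now at level $\im z$, where the potential is regular, gives invertibility for $\vert \im z \vert \geq C ( M )$. Combined with discreteness, each strip then contains only finitely many eigenvalues.
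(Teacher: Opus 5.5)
Your proposal is correct and follows essentially the paper's route. The steps coincide: accretivity and closability from $\re\langle \CP_c u,u\rangle=\|\nabla u\|^2$; maximal accretivity by killing $L^2$ solutions of the \emph{adjoint} equation $(-\Delta-i\CV_c+a)u=0$ with cutoffs (you wrote $(\CP_c+1)u=0$, a harmless slip); compactness from a sign-type multiplier plus a transverse Poincar\'e estimate on a layer of width $R^{-(\alpha-1)/3}$ around the zero cone; exclusion of $\re z=0$ via $\nabla u=0$; and the same multiplier argument near $\CV_c^{-1}(\lambda)$ for $|\lambda|$ large. That last step must give $\|(\CP_c-i\lambda)^{-1}\|\to0$, not merely boundedness, so that a Neumann series reaches $\re z=M$; this holds because $|\nabla \CV_c|$ grows on $\CV_c^{-1}(\lambda)$. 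The differences are cosmetic: the paper uses the globally $1$-Lipschitz multiplier $\phi(\sgn\CV\,\dist(x,\CV^{-1}(0)))$ in coordinates where $\CV=\widetilde{x}_2\widetilde{x}_1^{\alpha-1}$, and proves tightness $\int_{|x|\ge R}|u|^2\lesssim R^{(1-\alpha)/3}$ followed by Kolmogorov--Riesz--Fr\'echet rather than a weighted bound plus Rellich. Your sharp weight $\gamma=2(\alpha-1)/3$ is more than this multiplier argument yields and more than you need, since any $\gamma>0$ suffices.
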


We do not know if $\Lambda_{c}$ is always an infinite set or at least a non-empty set. Nevertheless, if $V_{c}$ has a sign (see Proposition \ref{a65}) or if $V_{c} = v ( x - c )^{n}$ is a polynomial in $1$D (see Proposition \ref{a65} and Theorem \ref{a66}), $\Lambda_{c}$ is an infinite set. Note that the last setting covers the case of smooth $V \in C^{\infty} ( \overline{X} )$ in $1$D with critical points of finite order. The semiclassical distribution of the eigenvalues of $P$ near $E$ is given by the following.

\begin{theorem}[Asymptotic of eigenvalues]\sl \label{a7}
Suppose that $P$ and $E$ satisfy Assumption~\ref{h1} and let $\varepsilon > 0$ be small enough. Fix $M > 0$ such that $M \notin \bigcup_{c \in \CC_{\max} ( E )} \re \Lambda_{c}$ and denote $\Omega = [ 0 , M h^{\sigma} ] + i [ E - \varepsilon , E + \varepsilon ] $. Then, for $h$ small enough, there exist a finite set $J$, a labelling $( \lambda_{j} )_{j \in J}$ of $\sigma ( P ) \cap \Omega$ and a labelling $( \mu_{j} )_{j \in J}$ of $\cup_{c \in \CC_{\max} ( E )} \Lambda_{c} \cap \{ \re z \leq M \}$ such that for all $j \in J$
\begin{equation}\label{b35}
\lambda_{j} = i E + h^{\sigma} \mu_{j} + o ( h^{\sigma} ) .
\end{equation}
Here the labelling of eigenvalues is done according to their algebraic multiplicity.

Moreover, for all $\gamma > 0$, there exists $C > 0$ such that
\begin{equation} \label{a8}
\Vert ( P - z )^{- 1} \Vert \leq C h^{- \sigma} ,
\end{equation}
for $h$ small enough and $z \in \Omega$ with $\dist ( z , \cup_{c \in \CC_{\max} ( E ) } ( i E+ h^{\sigma} \Lambda_{c} ) ) \geq \gamma h^{\sigma}$.
\end{theorem}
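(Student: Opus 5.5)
The plan is to build a Grushin-type (or resolvent gluing) argument that reduces the spectral problem for $P$ in $\Omega$ to the model operators $\CP_{c}$, $c \in \CC_{\max} ( E )$. Write $z = i E + h^{\sigma} \zeta$ with $\zeta$ in the rescaled box $\widetilde{\Omega} = [ 0 , M ] + i [ - \varepsilon h^{- \sigma} , \varepsilon h^{- \sigma} ]$. We use a partition of unity $1 = \sum_{c \in \CC ( E )} \chi_{c}^{2} + \chi_{0}^{2}$. Here $\chi_{c}$ is supported in a ball $B ( c , R h^{\beta_{c}} )$ with $\beta_{c} = 2 / ( \alpha_{c} + 2 )$, in a slightly enlarged region, and $\chi_{0}$ is supported away from these shrinking balls. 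The IMS-type localization formula gives
\begin{equation*}
( P - z ) \chi_{j} = \chi_{j} ( P - z ) - [ h^{2} \Delta , \chi_{j} ] ,
\end{equation*}
and the commutator is of order $h^{2 - \beta_{c}} \cdot h^{-\beta_c}\cdot$ (derivative loss). The key is to choose the localization scale slightly larger than $h^{\beta}$, say $h^{\beta - \delta}$, so that the commutators are $o ( h^{\sigma} )$ relative to the local inverses.

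\emph{Step 1 (regions without maximal critical points).} First I will prove an a priori estimate $\Vert u \Vert \lesssim h^{- \sigma'} \Vert ( P - z ) u \Vert$ with $\sigma' < \sigma$ for $u$ supported away from $\CC_{\max} ( E )$.

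\begin{itemize}
\item Near a regular point, or where $\vert V - E \vert \geq \varepsilon'$, I will use a multiplier/commutator argument. Concretely, I take $\im \langle ( P - z ) u , \varphi u \rangle$ with $\varphi$ a function of $V$, and combine it with $\re \langle ( P - z ) u , u \rangle$. With $\nabla V \neq 0$, the standard estimate for $- h^{2} \Delta + i V$ yields a resolvent bound of order $h^{- 2 / 3}$. Since $\sigma > 2/3$ exactly when $\alpha > 2$, this is $o ( h^{- \sigma} )$ in the relevant cases. When $\alpha = 2$ I will need the regular-point bound $h^{-2/3}$, which is still smaller than $h^{-1}$.
\item Near a critical point $c$ with $\alpha_{c} < \alpha$, the rescaled model $\CP_{c}$ appears at scale $h^{\sigma_{c}}$ with $\sigma_{c} < \sigma$. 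Propositions \ref{a46}--\ref{a47} give that $\CP_{c}$ has spectrum with real part bounded below and resolvent bounded on $\{ \re \zeta \leq M' \}$ away from its spectrum. Since $M h^{\sigma} \ll h^{\sigma_{c}}$, the point $z$ rescales to near $0$. So I need $0 \notin \overline{\mathrm{num}}$-type control: a bound $\Vert ( \CP_{c} - \zeta )^{- 1} \Vert \leq C$ uniformly for $\re \zeta$ near $0$ and all $\im \zeta$. I expect this to come from the model analysis, namely the positivity of the real part of $\Lambda_{c}$ together with a large-$\vert \im \zeta \vert$ estimate via the regular-point argument on the model.
\item Far from $c$ inside the ball, $\vert x - c \vert \gg h^{\beta}$, the homogeneous structure plus \eqref{a4} and \eqref{a81} reduce matters to the regular-point estimate with effective gradient $\vert x - c \vert^{\alpha_{c} - 1}$. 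A dyadic decomposition in $\vert x - c \vert$, rescaling each annulus, gives a summable bound $o ( h^{- \sigma} )$.
\end{itemize}

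\emph{Step 2 (maximal critical points).} Near $c \in \CC_{\max} ( E )$, conjugate by $U_{c}$ using \eqref{a32}. By \eqref{a4}, the perturbation $V - V_{c}$ becomes $o ( 1 )$ relative to $h^{\sigma}$ on $B ( c , R h^{\beta} )$, in the appropriate relatively bounded sense. Concretely, I will use a gradient bound to control it against $\CV_{c}$ on the growing domain $\vert x \vert \leq h^{-\delta}$. Then build the Grushin problem with $R_{+} , R_{-}$ given by the generalized eigenvectors of $\CP_{c}$ for the eigenvalues in $\{ \re \zeta \leq M \}$, cut off and pulled back. The exterior resolvent is glued from Step 1 and from $( \CP_{c} - \zeta )^{- 1}$ restricted to the complement of the spectral subspace. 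This gives an effective $N \times N$ matrix $E_{-+} ( \zeta ) = \bigoplus_{c} ( \zeta - \CP_{c} \vert_{\text{Riesz}} ) + o ( 1 )$, uniformly on compact $\zeta$-sets. For large $\vert \im \zeta \vert$ with $\re \zeta \leq M$, the model resolvent is uniformly bounded and no eigenvalues occur.

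\emph{Step 3 (counting).} Rouché's theorem applied to $\det E_{-+}$ on contours around each model eigenvalue gives the labelling with algebraic multiplicities and \eqref{b35}. Here $M \notin \re \Lambda_{c}$ ensures the boundary $\re \zeta = M$ stays a fixed distance from the model spectrum. The resolvent formula $( P - z )^{- 1} = E - E_{+} E_{-+}^{- 1} E_{-}$ yields \eqref{a8} when $\dist ( \zeta , \Lambda_{c} ) \geq \gamma$.

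\emph{Main obstacle.} I expect the hardest part to be the uniform non-selfadjoint resolvent bound for $\CP_{c}$ on the whole strip $\re \zeta \leq M$, $\vert \im \zeta \vert \to \infty$, together with the annular/dyadic regular-point estimate near degenerate critical points. Pseudospectral effects forbid deducing either from spectral information alone. I would attack both with the same weighted multiplier $\im \langle ( P - z ) u , \psi ( V - \re\text{-shift} ) u \rangle$, exploiting \eqref{a81} through the lower bound $\vert \nabla V_{c} \vert \gtrsim r^{\alpha - 1}$.
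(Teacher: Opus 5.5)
Your overall architecture (a Grushin problem, Rouch\'e applied to $\det E_{-+}$, local models at every critical point, dyadic regular-point estimates) differs from the paper's. As written, however, Step 2 has a genuine gap where the localization scale meets the perturbation $V - V_{c}$.

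\textbf{The gap: power-law localization scale.} Assumption \eqref{a4} is a pure little-$o$ with no rate. Suppose you cut off at the power scale $h^{\beta - \delta}$, which is what makes your commutators $o(1)$. On $B ( c , h^{\beta - \delta} )$ you then only know $\vert V - V_{c} \vert = o ( h^{\sigma - \alpha \delta} )$. Against a local inverse of size $h^{- \sigma}$ this is $o ( h^{- \alpha \delta} )$, which is not small.

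Your proposed repair is to treat the rescaled perturbation $W_{h} ( x ) = h^{- \sigma} ( V - V_{c} ) ( c + h^{\beta} x )$ as relatively small with respect to $\CP_{c}$. All you know is $\vert W_{h} ( x ) \vert \leq \eta ( h ) \vert x \vert^{\alpha}$ on $\vert x \vert \leq h^{- \delta}$, with $\eta ( h ) \to 0$ arbitrarily slowly, and this repair fails for two reasons. First, $\Vert \vert x \vert^{\alpha} ( \CP_{c} - \zeta )^{- 1} \Vert$ is not bounded uniformly in $\im \zeta$, since quasimodes concentrate on $\{ \CV_{c} = \im \zeta \}$. Second, when $v_{c}$ changes sign in $d \geq 2$, $\vert x \vert^{\alpha}$ is not even $\CP_{c}$-bounded. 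Take $\CV_{c} = x_{1} x_{2}$, which satisfies \eqref{a81}, and $u = r^{1/6} \chi ( x_{1} - r ) \phi ( r^{1/3} x_{2} )$. Then $\Vert u \Vert \sim 1$ and $\Vert \CP_{c} u \Vert \sim r^{2/3}$, but $\Vert \vert x \vert^{2} u \Vert \sim r^{2}$. So on $\vert x \vert \leq h^{- \delta}$ the relative size of $W_{h}$ can be of order $\eta ( h ) h^{- 4 \delta / 3} \to \infty$.

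\textbf{The fix.} The missing idea is the paper's. Cut off at $R h^{\beta}$ with $R$ \emph{fixed}, so that $\sup_{B ( c , 3 R h^{\beta} )} \vert V - V_{c} \vert = o_{h \to 0}^{R} ( h^{\sigma} )$ in sup norm, with no relative bound needed. Accept commutator errors of size $\CO ( R^{- 1} )$, controlled by the gradient bound of Corollary \ref{a49}. Then let $h \to 0$ first and $R \to \infty$ afterwards; the $o ( h^{\sigma} )$ in \eqref{b35} comes from $\gamma > 0$ being arbitrary. A scale $L ( h ) h^{\beta}$ with $L ( h ) \to \infty$ chosen slowly, in terms of the modulus in \eqref{a4}, also works. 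With either fix, your Step 1 no longer gives $h^{- \sigma^{\prime}}$ with $\sigma^{\prime} < \sigma$ on the annulus $\vert x - c \vert \gtrsim R h^{\beta}$. There the bound is only of order $R^{- 2 ( \alpha - 1 ) / 3} h^{- \sigma}$, which suffices but is small only through $R$.

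\textbf{Further points and comparison with the paper.} Step 1 gives a priori bounds only for $u$ supported away from $\CC_{\max} ( E )$. Your Grushin problem, however, needs an actual exterior inverse, or else an injectivity estimate for the full Grushin matrix combined with a Fredholm index-zero argument; you supply neither. The paper obtains this at no cost with $Q = P + G^{2}$. The identity $\re \langle u , ( Q - i \lambda ) u \rangle = h^{2} \Vert \nabla u \Vert^{2} + \Vert G u \Vert^{2}$ and the multiplier $\im \langle \chi u , ( Q - i \lambda ) u \rangle$ hold for every $u \in D ( P )$, and they give $\Vert ( Q - z )^{- 1} \Vert \leq M^{- 1} h^{- \sigma}$ directly (Proposition \ref{a33}).

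In that argument, lower-order critical points need no model operator at all. At the resolution $\delta = s h^{\beta}$, their natural scale $h^{2 / ( \alpha_{c} + 2 )} \ll h^{\beta}$ is invisible (see the proof of Lemma \ref{b30}). So your local models at $c \notin \CC_{\max} ( E )$ and the dyadic gluing can be avoided, although your observation that they rescale to $\re \zeta \approx 0$, where Propositions \ref{a46}--\ref{a47} bound the model resolvent, is correct.

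For the counting, the paper uses the two-term parametrix \eqref{a24} and compares ranks of Riesz projectors rather than applying Rouch\'e to $\det E_{-+}$. Once the scale issue is fixed, your Grushin packaging is a legitimate alternative for that step. Finally, $\sigma > 2/3$ holds for every $\alpha > 1$, not only for $\alpha > 2$.
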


In this result, the set $J$ is independent of $h$ small enough. It is possible to describe the sum of the spectral projectors associated to the eigenvalues of $P$ close to an element of $\cup_{c \in \CC_{\max} ( E )} ( i E + h^{\sigma} \Lambda_{c} )$ in terms of the spectral projectors of $\CP_{c}$ (see \eqref{a38}). When $\sigma ( \CP_{c} ) \neq 0$, the eigenvalue free region of size $h^{\sigma}$ and the resolvent estimate \eqref{a8} are optimal. Moreover, the asymptotic \eqref{b35} shows that the leftmost eigenvalues of $P$ are the one given by the critical values of $V$ which are the more degenerated. Note that the multiplicity of $\lambda \in \sigma ( P )$ may arise either from a multiple eigenvalue of a model operator $P_{c}$, or from several model operators $P_{c}$ sharing the same eigenvalue.

To the best of our knowledge, there are very few results concerning spectral asymptotics for the operator $P$. Let us mention the paper by Almog--Henry \cite{AlHe16_01} for non-critical potential $V$ (see also \cite{AlGrHe19_01}, \cite{AlHe20_01} where various boundary conditions are considered and \cite{AlGrHe18_01} on exterior domains). In all these papers the potential does not have any critical point and the spectrum is described by some Airy operator. As far as we know, results obtained when critical points are present appear to be even rarer. In his PhD thesis, Henry \cite{He13_02} obtained an asymptotic equivalent of the real part of the leftmost eigenvalue in the case of Morse potentials. In dimension $1$, Averseng--Frantz--H\'erau--Raymond \cite{AvFrHeRa25_01} have studied the tunneling effect (and the asymptotic of eigenvalues) between two symmetric wells. In a relatively close framework, let us also mention the work by Morin--Raymond--V{\~u} Ng{\d{o}}c \cite{MoRaVu23_01} on magnetic Schr\"odinger operator with complex potential.

The resolvent estimate \eqref{a8} is the natural estimate one hopes to get in this situation, generalizing the estimate of the resolvent of order $\dist ( z , \sigma ( P ) )^{- 1}$ in the selfadjoint case. Similar estimates have been obtained by Malkov \cite{Ma26_01} for Schr\"odinger operators with complex potentials (not necessarily purely imaginary) in non-degenerate case (see also \cite{HeHiSj08_02} for general results on non-selfadjoint real second order differential operators).
Estimate \eqref{a8} should be compared with the results of Coti Zelati--Gallay \cite{CoGa23_01} where operators like $P$ were considered (see also \cite{BeCoZ17_01} for previous results on related topics). In Proposition 2.4 of \cite{CoGa23_01}, it is proven that there exists a constant $C_*>0$ such that \eqref{a8} holds true for $\re z \leq C_*h^{\sigma}$, under some assumptions on the potential $V$ which are similar to our in dimension $1$ and for Morse potential in higher dimension. 
Theorem \ref{a7} improves their result in several directions. First we identify the best constant $C_*$ as the infimum of the real part of the spectrum of the model operators. Second, we go beyond the vertical line $\re z =C_*h^{\sigma}$ and compute an approximation of the bottom of the spectrum. Let us also mention that our degenerate Morse assumption requires only the potential to be $C^{1}$.
Though these are improvements of the results of \cite{CoGa23_01}, our approach is based on their paper which was a great source of inspiration.

\begin{figure}
\begin{center}
\begin{picture}(0,0)%
\includegraphics{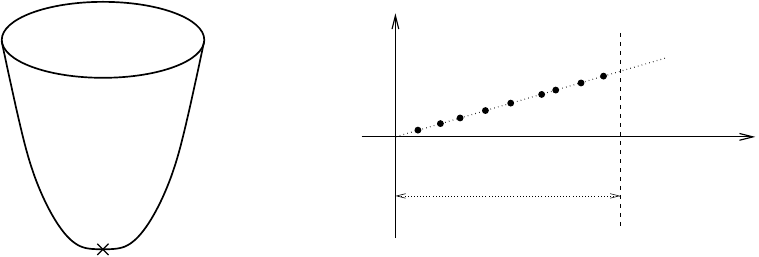}%
\end{picture}%
\setlength{\unitlength}{1184sp}%
\begingroup\makeatletter\ifx\SetFigFont\undefined%
\gdef\SetFigFont#1#2#3#4#5{%
  \reset@font\fontsize{#1}{#2pt}%
  \fontfamily{#3}\fontseries{#4}\fontshape{#5}%
  \selectfont}%
\fi\endgroup%
\begin{picture}(20177,6813)(-10543,-2344)
\put(2701,-361){\makebox(0,0)[lb]{\smash{{\SetFigFont{9}{10.8}{\rmdefault}{\mddefault}{\updefault}$h^{\frac{2 \alpha}{\alpha + 2}}$}}}}
\put(-8999,2939){\makebox(0,0)[b]{\smash{{\SetFigFont{9}{10.8}{\rmdefault}{\mddefault}{\updefault}$V$}}}}
\put(-299,1064){\makebox(0,0)[rb]{\smash{{\SetFigFont{9}{10.8}{\rmdefault}{\mddefault}{\updefault}$0$}}}}
\put(7576,2864){\makebox(0,0)[lb]{\smash{{\SetFigFont{9}{10.8}{\rmdefault}{\mddefault}{\updefault}$e^{i \frac{\pi}{\alpha + 2}} \R^{+}$}}}}
\put(-7799,-1786){\makebox(0,0)[b]{\smash{{\SetFigFont{9}{10.8}{\rmdefault}{\mddefault}{\updefault}$0$}}}}
\end{picture}%
\end{center}
\caption{The potential $V$ and the eigenvalues of $P$ in Example \ref{a92}.} \label{f4}
\end{figure}

\begin{example}\rm \label{a92}
In $X = B ( 0 , 1 ) \subset \R^{d}$, we consider the potential
\begin{equation*}
V ( x ) = \vert x \vert^{\alpha} ,
\end{equation*}
for some $\alpha > 1$. In the energy surface $E = 0$, it has a single critical point $c = 0$ and $V$ satisfies Assumption~\ref{h1}. Then, Theorem \ref{a7} and Proposition \ref{a65} show that, modulo $o ( h^{\frac{2 \alpha}{\alpha + 2}} )$ terms, the eigenvalues of $P$ near $0$ are given by
\begin{equation*}
e^{i \frac{\pi}{\alpha + 2}} \sigma ( - \Delta + \vert x \vert^{\alpha} ) h^{\frac{2 \alpha}{\alpha + 2}} .
\end{equation*}
This setting is illustrated in Figure \ref{f4}.
\end{example}

\begin{example}\rm \label{a93}
In a neighborhood $X$ of $0 \in \R^{2}$, we consider the potentials
\begin{equation*}
V_{1} ( x_{1} , x_{2} ) = x_{1}^{2} \qquad \text{ and } \qquad V_{1} ( x_{1} , x_{2} ) = x_{1}^{2} + x_{2}^{4} .
\end{equation*}
For these two potentials, $c = 0$ is a critical point of energy $E = 0$ and $V_{c} = x_{1}^{2} = v_{c} ( \theta ) r^{\alpha_{c}}$ with $v_{c} ( \theta ) = \theta_{1}^{2}$ and $\alpha_{c} = 2$. But, $v_{c}$ is degenerate and Assumption~\ref{h1} is not satisfied. Note that the spectrum of $P_{c} = - \Delta + i x_{1}^{2}$ on $L^{2} ( \R^{2} )$ is not discrete and the conclusions of Theorem \ref{a7} cannot be true. Moreover, $V_{1}^{- 1} ( 0 ) =\{ 0 \} \times \R$ and $V_{2}^{- 1} ( 0 ) =\{ 0 \}$ are all critical points of energy $E = 0$, showing that the critical points may or may not be isolated without \eqref{a81}, regardless of $P_{c}$. Nevertheless, it should be possible to give the asymptotic of the small eigenvalues for these potentials by adapting our approach.
\end{example}

\begin{remark}\sl \label{a94}
In the statement of Theorem \ref{a7}, we implicitly assume that $V$ has at least one critical point of energy $E$ (i.e. $\CC ( E ) \neq \emptyset$). If the energy $E$ is regular (i.e. $\CC ( E ) = \emptyset$), there exist $C , m , \varepsilon > 0$ such that, for $h$ small enough, $P$ has no eigenvalue in $[ 0 , m h^{2 / 3} ] + i [ E - \varepsilon , E + \varepsilon ]$ and that
\begin{equation*}
\Vert ( P - z )^{- 1} \Vert \leq C h^{- 2 / 3} ,
\end{equation*}
for $z$ in that set. To prove that, it is enough to follow Section \ref{s1} and to take $\delta = h^{2 / 3}$. Note that this was already proved in \cite{AlHe16_01} and \cite{CoGa23_01}. From a global perspective, if $X=\Pi^d$ then $V$ admits necessarily some extrema and hence there exists some energy $E$ such that the set $\CC ( E )$ is non-empty. However, in the case where $X$ is a bounded subset of $\R^d$, it is possible that the potential $V$ does not have any critical point. In that case, one can apply \cite{AlHe16_01} to get the spectral asymptotics.
\end{remark}

The remainder term in Theorem \ref{a7} is only a $o ( h^{\sigma} )$. Under Assumption~\ref{h1}, it seems difficult to have a better estimate. But if we assume $\nabla ( V ( x ) - V_{c} ( x ) ) = \CO ( \vert x - c \vert^{\alpha_{c} - 1 + \nu} )$ for some $\nu > 0$ instead of \eqref{a4}, it should be possible to replace the $o ( h^{\sigma} )$ by a $\CO ( h^{\sigma + \widetilde{\nu}} )$ for some $\widetilde{\nu} > 0$. For that, one should replace the big constants $R , S, s^{- 1}$ by some negative powers of $h$ in the proof of Theorem \ref{a7}. One may also ask whether the eigenvalues admit a full asymptotic expansion in fractional powers of $h$ when $V \in C^{\infty}$ (or more generally when $V$ has near any critical point $c$ an  asymptotic expansion in fractional powers of $\vert x - c\vert$). However, the advantage of considering $C^{1}$ potentials is that it allows to study the degeneracies in $\vert x - c \vert^{\alpha_{c}}$ for every $\alpha_{c} > 1$ and then to have all the possible values of $\sigma \in ] 2 / 3 , 2[$ for the asymptotic of the eigenvalues.

\begin{figure}
\begin{center}
\begin{picture}(0,0)%
\includegraphics{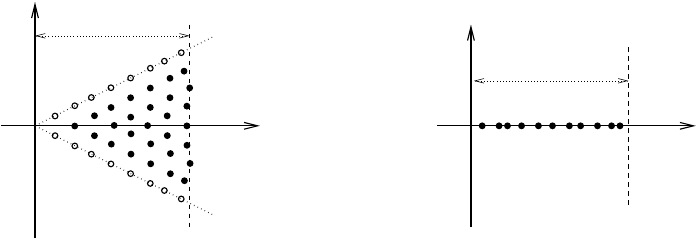}%
\end{picture}%
\setlength{\unitlength}{1184sp}%
\begingroup\makeatletter\ifx\SetFigFont\undefined%
\gdef\SetFigFont#1#2#3#4#5{%
  \reset@font\fontsize{#1}{#2pt}%
  \fontfamily{#3}\fontseries{#4}\fontshape{#5}%
  \selectfont}%
\fi\endgroup%
\begin{picture}(18591,6366)(-932,-2194)
\put(11326,1064){\makebox(0,0)[rb]{\smash{{\SetFigFont{9}{10.8}{\rmdefault}{\mddefault}{\updefault}$0$}}}}
\put(13501,2489){\makebox(0,0)[lb]{\smash{{\SetFigFont{9}{10.8}{\rmdefault}{\mddefault}{\updefault}$h^{6 / 5}$}}}}
\put(5176,3164){\makebox(0,0)[lb]{\smash{{\SetFigFont{9}{10.8}{\rmdefault}{\mddefault}{\updefault}$e^{i \frac{\pi}{6}} \R^{+}$}}}}
\put(5176,-1711){\makebox(0,0)[lb]{\smash{{\SetFigFont{9}{10.8}{\rmdefault}{\mddefault}{\updefault}$e^{- i \frac{\pi}{6}} \R^{+}$}}}}
\put(-299,1064){\makebox(0,0)[rb]{\smash{{\SetFigFont{9}{10.8}{\rmdefault}{\mddefault}{\updefault}$0$}}}}
\put(1801,3614){\makebox(0,0)[lb]{\smash{{\SetFigFont{9}{10.8}{\rmdefault}{\mddefault}{\updefault}$h^{4 / 3}$}}}}
\end{picture}%
\end{center}
\caption{The eigenvalues of $P$ in Example \ref{a91} with $d = 2$, $n = 4$, $v_{1} = - v_{2}$ on the left and $n = 3$ on the right (in filled circles).} \label{f3}
\end{figure}

\begin{example}[generalized Morse case]\rm \label{a91}
Let $V \in C^{1} ( \R^{d}) $ defined in a neighborhood of $0$ and satisfying \eqref{a4} near $c = 0$ with
\begin{equation} \label{a86}
V_{c} ( x ) = \sum_{j = 1}^{d} v_{j} x_{j}^{n} ,
\end{equation}
for some $n \in \N$, $n\geq 2$ and $v_{j} \in \R^{*}$. Such a potential satisfies Assumption~\ref{h1} at energy $E = 0$. Since the variables are separated, we have
\begin{equation}\label{b36}
\sigma ( \CP_{c} ) = \sum_{1 \leq j \leq d} \sigma ( - \partial_{x}^{2} + v_{j} x^{n} ) ,
\end{equation}
taking into account the multiplicity and considering $- \partial_{x}^{2} + v_{j} x^{n}$ on $L^{2} ( \R )$. Then, Theorem \ref{a7} and Proposition \ref{a65} show that, modulo $o ( h^{\frac{2 n}{n + 2}} )$ terms, the eigenvalues of $P$ near $0$ are given by
\begin{equation*}
\sum_{1 \leq j \leq d} e^{i \sgn v_{j} \frac{\pi}{n + 2}} \vert v_{j} \vert^{\frac{2}{n + 2}} \sigma ( - \partial_{x}^{2} + x^{n} ) h^{\frac{2 n }{n + 2}} ,
\end{equation*}
for $n$ even and 
\begin{equation*}
\sum_{1 \leq j \leq d} \vert v_{j} \vert^{\frac{2}{n + 2}} \sigma ( - \partial_{x}^{2} + i x^{n} ) h^{\frac{2 n }{n + 2}} ,
\end{equation*}
for $n$ odd (see Figure \ref{f3}). In the previous expressions the operators $- \partial_{x}^{2} + x^{n}$ and $- \partial_{x}^{2} + i x^{n}$ act on $L^{2} ( \R )$. In particular, the number of such eigenvalues is always infinite thanks to Proposition \ref{a65} and Theorem \ref{a66}. The case of Morse functions $V \in C^{2} ( \overline{X} )$ falls within this framework with $n = 2$ and $V_{c}$ being the Taylor expansion of $V$ to order $2$ at the critical point $c$, after a possible unitary (linear) change of variables. In that case taking the real part of equation \eqref{b36}, one recovers the result of Theorem 4.1.3 in \cite{He13_02}. It is also possible to generalize \eqref{a86} in the spirit of \eqref{a84} considering
\begin{equation*}
V_{c} ( x ) = \sum_{j = 1}^{d} \big( v^{-}_{j} \one_{x_{j} < 0} + v^{+}_{j} \one_{x_{j} \geq 0} \big) \vert x_{j} \vert^{\alpha_{c}} ,
\end{equation*}
for some $\alpha_{c} > 1$ and $v_{j}^{\pm} \in \R^{*}$.
\end{example}

Up to now, we have only considered the spectrum of $P$ near a particular energy $E$. But, if $V$ satisfies Assumption~\ref{h1} for all $E \in V ( \overline{X} )$, it is possible to describe all the eigenvalues of $P$ closest to the imaginary axis using Theorem \ref{a7}, Remark \ref{a94} and a compactness argument. That the previous results hold in a $h$-independent interval $[ E - \varepsilon , E + \varepsilon ]$ in imaginary part is crucial here.

\begin{figure}
\begin{center}
\begin{picture}(0,0)%
\includegraphics{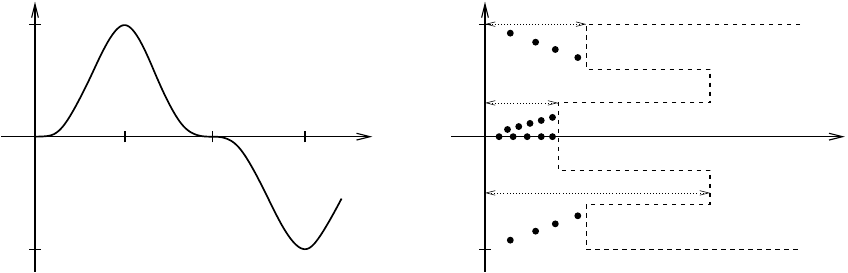}%
\end{picture}%
\setlength{\unitlength}{1184sp}%
\begingroup\makeatletter\ifx\SetFigFont\undefined%
\gdef\SetFigFont#1#2#3#4#5{%
  \reset@font\fontsize{#1}{#2pt}%
  \fontfamily{#3}\fontseries{#4}\fontshape{#5}%
  \selectfont}%
\fi\endgroup%
\begin{picture}(22566,7266)(-12932,-2794)
\put(676,4139){\makebox(0,0)[lb]{\smash{{\SetFigFont{9}{10.8}{\rmdefault}{\mddefault}{\updefault}$h$}}}}
\put(676,1964){\makebox(0,0)[lb]{\smash{{\SetFigFont{9}{10.8}{\rmdefault}{\mddefault}{\updefault}$h^{6 / 5}$}}}}
\put(676,-436){\makebox(0,0)[lb]{\smash{{\SetFigFont{9}{10.8}{\rmdefault}{\mddefault}{\updefault}$h^{2 / 3}$}}}}
\put(-12299,-2236){\makebox(0,0)[rb]{\smash{{\SetFigFont{9}{10.8}{\rmdefault}{\mddefault}{\updefault}$- 1$}}}}
\put(-12299,3764){\makebox(0,0)[rb]{\smash{{\SetFigFont{9}{10.8}{\rmdefault}{\mddefault}{\updefault}$1$}}}}
\put(-12299,1064){\makebox(0,0)[rb]{\smash{{\SetFigFont{9}{10.8}{\rmdefault}{\mddefault}{\updefault}$0$}}}}
\put(-299,-2236){\makebox(0,0)[rb]{\smash{{\SetFigFont{9}{10.8}{\rmdefault}{\mddefault}{\updefault}$- 1$}}}}
\put(-299,3764){\makebox(0,0)[rb]{\smash{{\SetFigFont{9}{10.8}{\rmdefault}{\mddefault}{\updefault}$1$}}}}
\put(-299,1064){\makebox(0,0)[rb]{\smash{{\SetFigFont{9}{10.8}{\rmdefault}{\mddefault}{\updefault}$0$}}}}
\put(-7199,239){\makebox(0,0)[b]{\smash{{\SetFigFont{9}{10.8}{\rmdefault}{\mddefault}{\updefault}$\pi$}}}}
\put(-9599,239){\makebox(0,0)[b]{\smash{{\SetFigFont{9}{10.8}{\rmdefault}{\mddefault}{\updefault}$\pi / 2$}}}}
\put(-4724,239){\makebox(0,0)[b]{\smash{{\SetFigFont{9}{10.8}{\rmdefault}{\mddefault}{\updefault}$3 \pi / 2$}}}}
\end{picture}%
\end{center}
\caption{The potential $V$ and the eigenvalues of $P$ in Example \ref{a90}.} \label{f2}
\end{figure}

\begin{example}\rm \label{a90}
Consider $V ( x ) = \sin^{3} ( x )$ on $X = [ 0 , 5]$. This smooth potential has $4$ critical points: $c_{1} = 0$, $c_{2} = \pi / 2$, $c_{3} = \pi$ and $c_{4} = 3 \pi / 2$ of order $\alpha_{1} = \alpha_{3} = 3$ and $\alpha_{2} = \alpha_{4} = 2$ corresponding to the energies $E_{1} = E_{3} = 0$, $E_{2} = 1$ and $E_{4} = - 1$. We can apply Theorem \ref{a7}, Proposition \ref{a65} and Theorem \ref{a66} to compute the eigenvalues of $P = - h^{2} \Delta + i V ( x )$ near the imaginary axis (note that $c_{1} \in \partial X$). Modulo lower order terms, they are given by
\begin{equation*}
\left\{ \begin{aligned}
&e^{i \pi / 5} \Lambda_{3}^{D} h^{6 / 5} \cup \Lambda_{3}^{i} h^{6 / 5}   \qquad \qquad  &&\text{near } 0 , \\
&1+ e^{- i \pi / 4} \sqrt{\frac{3}{2}} ( 2 \N + 1 ) h   &&\text{near } 1 , \\
&- 1+ e^{i \pi / 4} \sqrt{\frac{3}{2}} ( 2 \N + 1 ) h   &&\text{near } - 1 ,
\end{aligned} \right.
\end{equation*}
where $\Lambda_{3}^{D} , \Lambda_{3}^{i} \subset ] 0 , + \infty [$ is the (discrete) spectrum of $- \Delta + x^{3}$ on $\R^{+}$ with Dirichlet boundary condition and $- \Delta + i x^{3}$ on $\R$ respectively. Moreover, $P$ has no spectrum in a neighborhood of size $h^{2 / 3}$ of any energy in $\R \setminus \{ - 1 , 0 , 1 \}$ (see Figure \ref{f2}).
\end{example}

We have stated our results for $\overline{X}$ compact, but it seems possible to extend them to $X = \R^{d}$. In that case, we would have to make assumptions on $V$ at infinity. One approach to compute the eigenvalues near an energy $E$ is to assume that
\begin{equation} \label{b32}
\liminf_{x \to \infty} V ( x ) > E \qquad \text{or} \qquad \limsup_{x \to \infty} V ( x ) < E .
\end{equation}
Using hypocoercivity techniques (more precisely, considering $\re \< u , e^{\mp i \varepsilon} ( P - z ) u \>$ with $\varepsilon > 0$ small enough), it has been proved in \cite[Proposition 3.5]{AvFrHeRa25_01} that the spectrum of $P$ is then discrete near $E$. The problem is that \eqref{b32} does not allow to treat potentials that change sign at infinity (for instance, $x^{3}$ on $\R$ or $x_{1}^{2} - x_{2}^{2}$ on $\R^{2}$) and that it seems difficult to get good resolvent estimates for $( P - z)^{- 1}$ as $\vert \im z \vert \to + \infty$ used in the applications (see Section \ref{s6}). Another approach, more in the spirit of our paper, is to suppose that
\begin{equation} \label{b33}
\lim_{x \to \infty} \vert \nabla V ( x ) \vert = + \infty .
\end{equation}
This should allow us to treat the operators $P$ in dimension $d = 1$. However, in higher dimensions, it seems possible to construct a potential $V \in C^{1} ( \R^{2} )$ satisfying \eqref{b33} such that $P$ does not have a compact resolvent. To bypass this obstruction, we could assume, mimicking Assumption~\ref{h1}, that
\begin{equation} \label{b34}
\nabla \big( V ( x ) - V_{\infty} ( x ) \big) = o \big( \vert x \vert^{\alpha_{\infty} - 1} \big) ,
\end{equation}
as $x$ goes to infinity where $V_{\infty} ( x ) = v_{\infty} ( \theta ) r^{\alpha_{\infty}}$ in spherical coordinates $x = r \theta$, $\alpha_{\infty} > 1$, $v_{\infty} \in C^{1} ( \S^{d - 1} )$ and $V_{\infty}$ satisfies a non-degeneracy assumption similar to \eqref{a81}. Under \eqref{b34} and adapting Sections \ref{s7} and \ref{s8}, one could prove that $P$ is maximal accretive, has a compact resolvent, enjoys good resolvent estimates as $\vert \im z \vert \to + \infty$ and get the asymptotic of its leftmost eigenvalues. To keep the presentation concise, we give no precise statement in this direction.

Eventually, we give an example which shows that the shape of $V$ is not enough to get the asymptotic of the leftmost eigenvalues.

\begin{figure}
\begin{center}
\begin{picture}(0,0)%
\includegraphics{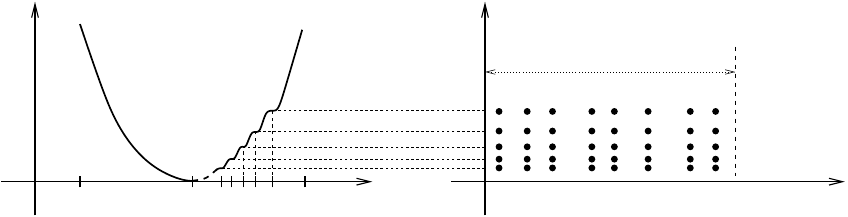}%
\end{picture}%
\setlength{\unitlength}{1184sp}%
\begingroup\makeatletter\ifx\SetFigFont\undefined%
\gdef\SetFigFont#1#2#3#4#5{%
  \reset@font\fontsize{#1}{#2pt}%
  \fontfamily{#3}\fontseries{#4}\fontshape{#5}%
  \selectfont}%
\fi\endgroup%
\begin{picture}(22566,5766)(-12932,-1894)
\put(-10799,-1561){\makebox(0,0)[b]{\smash{{\SetFigFont{9}{10.8}{\rmdefault}{\mddefault}{\updefault}$- 1$}}}}
\put(-7724,-1561){\makebox(0,0)[b]{\smash{{\SetFigFont{9}{10.8}{\rmdefault}{\mddefault}{\updefault}$0$}}}}
\put(-12299,-1486){\makebox(0,0)[rb]{\smash{{\SetFigFont{9}{10.8}{\rmdefault}{\mddefault}{\updefault}$0$}}}}
\put(-4724,-1561){\makebox(0,0)[b]{\smash{{\SetFigFont{9}{10.8}{\rmdefault}{\mddefault}{\updefault}$1$}}}}
\put(-6449,-1561){\makebox(0,0)[b]{\smash{{\SetFigFont{9}{10.8}{\rmdefault}{\mddefault}{\updefault}$c_{k}$}}}}
\put(-299,-1486){\makebox(0,0)[rb]{\smash{{\SetFigFont{9}{10.8}{\rmdefault}{\mddefault}{\updefault}$0$}}}}
\put(-299,1289){\makebox(0,0)[rb]{\smash{{\SetFigFont{9}{10.8}{\rmdefault}{\mddefault}{\updefault}$V ( c_{k} )$}}}}
\put(3339,2189){\makebox(0,0)[b]{\smash{{\SetFigFont{9}{10.8}{\rmdefault}{\mddefault}{\updefault}$h^{6 / 5}$}}}}
\put(-10274,2564){\makebox(0,0)[lb]{\smash{{\SetFigFont{9}{10.8}{\rmdefault}{\mddefault}{\updefault}$V ( x )$}}}}
\end{picture}%
\end{center}
\caption{The potential $V$ and the eigenvalues of $P$ in Example \ref{b31} with $V^{( 3 )} ( c_{k} )$ independent of $k$.} \label{f6}
\end{figure}

\begin{example}\rm \label{b31}
In $X = [ - 1 , 1 ]$, we consider a potential $V \in C^{1} ( \overline{X} )$ as in Figure $\ref{f6}$. In particular, we assume that $x^{2} \leq V ( x ) \leq x^{2} + x^{4}$ and then
\begin{equation*}
V ( x ) = x^{2} + o ( x^{2} ) ,
\end{equation*}
near $0$ (i.e. $\alpha_{0} = 2$). Suppose moreover that $V$ has an infinite number of critical points $c_{k}$ of order $\alpha_{c_{k}} = 3$ with $c_{k} \to 0$ as $k \to + \infty$. Applying Theorem \ref{a7} at each critical energy $V ( c_{k} )$, it follows that $P$ has eigenvalues at distance $h^{6 / 5}$ from the imaginary axis in any neighborhood of size $1$ of $0$. Then the eigenvalues of $P$ near $0$ are not close to the ones of $- h^{2} \Delta + i x^{2}$. This shows that \eqref{a57} is not enough to get the asymptotic of the leftmost eigenvalues.
\end{example}

In this paper, we have only considered purely imaginary potentials. It should be possible to adapt the present approach to the case of complex valued potentials $V$. If $\min \re V = 0$, the asymptotic of the eigenvalues closest to the imaginary axis should be given by the points $c \in \overline{X}$ such that $\re V ( c) = 0$ and $\im V^{\prime} ( c ) = 0$ (and then $V^{\prime} ( c ) = 0$ if $c \in X$).

\subsection{Spectrum of the model operator} \label{s2}

This part is devoted to the model operators appearing in Section \ref{s4}. More precisely, we collect some spectral informations on the operator acting on $L^{2} ( \R^{d} )$
\begin{equation} \label{a44}
\CP = - \Delta + i \CV ( x ) \qquad \text{ with } \qquad \CV ( x ) = v ( \theta ) r^{\alpha} ,
\end{equation}
in the spherical coordinates $x = r \theta$ with $\alpha > 1$, $v \in C^{1} ( \S^{d - 1} )$ and $\CV $ satisfying \eqref{a81}.

\begin{proposition}\sl \label{a46}
The operator $\CP$ of \eqref{a44} with domain $C^{\infty}_{0} ( \R^{d} )$ is accretive and closable. Its closed extension, still denoted $\CP$, is maximal accretive and has a compact resolvent. Moreover, $\sigma ( \CP ) \subset \{ z \in \C ; \ \re z > 0 \}$.
\end{proposition}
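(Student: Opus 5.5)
Accretivity is immediate: for $u \in C^{\infty}_{0} ( \R^{d} )$ we have $\re \< \CP u , u \> = \Vert \nabla u \Vert^{2} \geq 0$ because $\CV$ is real. An accretive, densely defined operator is closable, since its numerical range lies in $\{ \re z \geq 0 \}$. We denote by $\CP$ its closure.

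For maximal accretivity it suffices to show that $\CP + 1$ has dense range. Equivalently, if $w \in L^{2}$ satisfies $( - \Delta - i \CV + 1 ) w = 0$ in the distributional sense, then $w = 0$. We use the standard Kato-type cutoff argument. Since $\CV \in C^{1}$, elliptic regularity gives $w \in H^{2}_{\rm loc}$. Let $\chi_{R} = \chi ( x / R )$ and pair the equation with $\chi_{R}^{2} w$. Taking real parts,
\begin{equation*}
\Vert \chi_{R} \nabla w \Vert^{2} + \Vert \chi_{R} w \Vert^{2} \leq 2 \vert \< \chi_{R} \nabla w , w \nabla \chi_{R} \> \vert \leq \frac{C}{R} \Vert \nabla w \Vert_{L^{2} ( R \leq \vert x \vert \leq 2 R )} \Vert w \Vert .
\end{equation*}
The imaginary term $i \CV$ drops out because it is purely imaginary. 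A first pass, absorbing $\chi_{R} \nabla w$, shows $\nabla w \in L^{2}$. Letting $R \to \infty$ then gives $w = 0$. So $\CP$ is maximal accretive, and $\sigma ( \CP ) \subset \{ \re z \geq 0 \}$.

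The real content is the compact resolvent. Our plan is to prove a global subelliptic estimate
\begin{equation*}
\Vert \langle x \rangle^{\kappa} u \Vert + \Vert \nabla u \Vert \leq C ( \Vert \CP u \Vert + \Vert u \Vert ) , \qquad u \in C^{\infty}_{0} ( \R^{d} ) ,
\end{equation*}
with some $\kappa > 0$; we expect $\kappa = \frac{\alpha - 1}{3}$ or so, as in Airy-type estimates. Rellich's criterion then gives compactness of the embedding of the domain into $L^{2}$, hence a compact resolvent.

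To prove the estimate, we would use a dyadic partition of unity in $r$ together with the non-degeneracy \eqref{a81}. By homogeneity, \eqref{a81} gives $\vert \nabla \CV ( x ) \vert \geq c_{0} \vert x \vert^{\alpha - 1}$. On a dyadic shell $\vert x \vert \sim 2^{k}$, we rescale to unit size. The operator becomes $2^{- 2 k} ( - \Delta + i 2^{( \alpha + 2 ) k} \CV )$, which is a semiclassical operator with a potential having non-vanishing gradient. On it we apply the multiplier/commutator argument of the regular case (the one used in Remark \ref{a94}, following \cite{AlHe16_01} and \cite{CoGa23_01}). Concretely, we would either take $\re \< ( - \Delta + i \CV ) u , i \phi u \>$ with $\phi$ a bounded function of $\CV$ (such as $\CV / \sqrt{ \CV^{2} + \mu^{2} }$ with $\mu$ adapted to the shell), or use a commutator with $\nabla \CV \cdot \nabla$. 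This gives $\Vert \vert \nabla \CV \vert^{2/3} u \Vert \lesssim \Vert \CP u \Vert + \Vert u \Vert$ on each shell, up to cutoff errors. The cutoff errors involve only $\Vert \nabla u \Vert$ times $2^{- k}$, and these are controlled by accretivity. Summing over $k$ yields the estimate with weight $\langle x \rangle^{2 ( \alpha - 1 ) / 3}$.

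This localized commutator estimate, uniform over shells and with $v$ only $C^{1}$, is the main obstacle. The $C^{1}$ regularity is a problem because the natural commutator $[ \Delta , \CV ]$ involves $\Delta \CV$. To avoid second derivatives of $\CV$, we would use the multiplier $\phi ( \CV )$ version, which needs only $\nabla \CV$.

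Finally, $\re z > 0$ on the spectrum. Since the resolvent is compact, the spectrum consists of eigenvalues. If $\CP u = z u$ with $\re z = 0$, then $\Vert \nabla u \Vert^{2} = \re \< \CP u , u \> = 0$ (using that $u$ lies in the form domain, which follows from the closure argument above). So $u$ is constant, and since $u \in L^{2}$ this forces $u = 0$.
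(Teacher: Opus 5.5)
Your proposal is correct, and for the compact resolvent it takes a genuinely different route from the paper. Accretivity, closability, maximal accretivity and $\re z>0$ are handled essentially as in the paper. Your dense-range cutoff argument is equivalent to the paper's proof that $\ker(\CP^{*}+a)=\{0\}$. One small fix: to absorb $\chi_{R}\nabla w$ you must bound the cross term by $2\Vert\chi_{R}\nabla w\Vert\,\Vert w\nabla\chi_{R}\Vert$, not by the annulus norm you display. That form gives directly $\Vert\chi_{R}w\Vert\le CR^{-1}\Vert w\Vert$.

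\textbf{The paper's route.} The paper proves no weighted estimate. It uses one $h$-free Lipschitz multiplier $\chi=\phi(\sgn\CV\,\dist(x,\CV^{-1}(0)))$ with $\vert\nabla\chi\vert\le 1$, which bounds $\langle u,\chi\CV u\rangle$ on the graph-norm unit ball. From this it derives tightness directly: $\int_{\vert x\vert\ge R}\vert u\vert^{2}\lesssim R^{(1-\alpha)/3}$.
\begin{itemize}
\item In directions where $v\neq 0$ this is immediate.
\item Near the zero cone it uses the normal form $\CV=\widetilde{x}_{2}\widetilde{x}_{1}^{\alpha-1}$ of Case 2.2, which is global here since $\xi=0$. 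It splits at $\vert\widetilde{x}_{2}\vert=R^{(1-\alpha)/3}$ and applies Lemma \ref{b18} in $\widetilde{x}_{2}$.
\end{itemize}
Compactness then follows from Kolmogorov--Riesz--Fr\'echet.

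\textbf{Your route.} The dyadic approach works, and the step you call the main obstacle is not one. By homogeneity, every shell rescales to the same annulus $\{1/2\le\vert y\vert\le 2\}$ with the same potential $\CV$; only $h=2^{-k(\alpha+2)/2}$ changes. So uniformity in $k$ is just uniformity in $h$ of the regular-energy bound $h^{2/3}\Vert w\Vert\lesssim\Vert(-h^{2}\Delta+i\CV)w\Vert$ for $w$ supported in that annulus. This is the bound of Remark \ref{a94}, and its proof (Case 1 straightening, a multiplier, Lemma \ref{b18}) needs only $C^{1}$. Your smooth multiplier $\CV/\sqrt{\CV^{2}+\mu^{2}}$ with $\mu\sim h^{2/3}$ also works, since it involves only $\nabla\CV$. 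The cutoff errors are then summed using $\Vert\nabla u\Vert^{2}\le\Vert\CP u\Vert\Vert u\Vert$, exactly as you say.

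\textbf{Comparison.} Your approach gives a quantitative a priori bound with the Airy weight $\langle x\rangle^{2(\alpha-1)/3}$. That exponent, not $(\alpha-1)/3$, is what your shell computation actually yields. It also bypasses the Case 2.2 normal form entirely, because rescaling turns the degeneracy at infinity into a regular semiclassical problem. The cost is the partition-of-unity bookkeeping. The paper's argument is shorter, reuses the normal forms of Section \ref{s3}, and extracts only the tail smallness needed for compactness.
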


Since $\CP$ is a non-selfadjoint operator, it is not clear that it has at least one eigenvalue. For instance, $- \partial_{x}^{2} + i x$ on $L^{2} ( \R )$ is known to have an empty spectrum. Nevertheless, when $\CV$ has a sign, it is possible to compute the spectrum of $\CP$ and show that it has an infinite number of eigenvalues. In the sequel, we will say that ``$\CV$ has a sign'' if $v$ is globally positive or globally negative on $\S^{d - 1}$.

\begin{proposition}\sl \label{a65}
Let $\CP$ be given by \eqref{a44} where $\CV$ has a sign, and let $\pm$ denote the sign of $v$. Then, the domain of $\CP$ is $D ( \CP ) = \{ u \in H^{2} ( \R^{d} ) ; \ \vert x \vert^{\alpha} u \in L^{2} ( \R^{d} ) \}$ and
\begin{equation*}
\sigma ( \CP ) = e^{\pm i \frac{\pi}{\alpha + 2}} \sigma ( \CQ ) ,
\end{equation*}
where $\CQ = - \partial_{x}^{2} \pm \CV$ is selfadjoint and positive on $D ( \CP )$ with compact resolvent.
\end{proposition}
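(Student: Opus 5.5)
Without loss of generality take $v>0$; the case $v<0$ follows by complex conjugation, since $\overline{\CP u} = (-\Delta - i\CV)\bar u$. The plan is to obtain the statement by complex scaling (a dilation analytic in the scaling parameter). Write $\CQ = -\Delta + \CV$, where $\CV = v(\theta) r^{\alpha} \geq c_0 r^{\alpha}$ with $c_0 = \min v > 0$. Note that $-\partial_x^2$ in the statement should be read as $-\Delta$ in dimension $d$.

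\textbf{Properties of $\CQ$ and the domain of $\CP$.} First, $\CQ$ is essentially selfadjoint on $C_0^\infty$, nonnegative, and $\CV \to +\infty$, so it has compact resolvent. Then we show the separation estimate
\begin{equation*}
\Vert \Delta u \Vert^2 + \Vert \CV u \Vert^2 \lesssim \Vert (-\Delta + e^{i\phi}\CV) u \Vert^2 + \Vert u \Vert^2 , \qquad u \in C_0^\infty , \ \vert \phi \vert \leq \pi/2 .
\end{equation*}
To get it, expand the square. The cross term is $2\re\big( e^{i\phi}\<-\Delta u, \CV u\> \big)$. Integrating by parts,
\begin{equation*}
\<-\Delta u, \CV u\> = \int \CV \vert \nabla u \vert^2 + \int \bar u \, \nabla \CV \cdot \nabla u .
\end{equation*}
The first term contributes $\cos\phi \int \CV \vert \nabla u\vert^2 \geq 0$. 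The second term is controlled using $\vert \nabla \CV \vert \lesssim r^{\alpha-1} \lesssim \CV^{1-1/\alpha}$ for large $r$ (this uses $v>0$), so that
\begin{equation*}
\vert \< \bar u \nabla\CV, \nabla u\> \vert \leq \eta \big( \Vert \CV u\Vert^2 + \Vert \Delta u \Vert^2 \big) + C_\eta \Vert u \Vert^2 ,
\end{equation*}
by interpolation. At $\phi = \pi/2$ the favorable term vanishes, but the bound above still absorbs the cross term. With $\phi = \pi/2$ this identifies the closure of $\CP$ on $C_0^\infty$ with the operator on $D = \{ u \in H^2 ; \vert x\vert^{\alpha} u \in L^2\}$, and gives $D(\CQ) = D$ as well.

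\textbf{Holomorphic family.} For $\theta \in S = \{ \vert \im \theta \vert < \pi/(2(\alpha+2)) + \epsilon \}$, set
\begin{equation*}
T(\theta) = e^{-2\theta}(-\Delta) + e^{\alpha\theta}\CV
\end{equation*}
on the fixed domain $D$. For real $\theta$, $T(\theta) = U_\theta \CQ U_\theta^{-1}$, where $(U_\theta u)(x) = e^{d\theta/2} u(e^{\theta}x)$ is unitary, so $\sigma(T(\theta)) = \sigma(\CQ)$. Factor $T(\theta) = e^{-2\theta}\big(-\Delta + e^{(\alpha+2)\theta}\CV\big)$. On the strip, $\vert \arg e^{(\alpha+2)\theta} \vert < \pi/2 + (\alpha+2)\epsilon$. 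The separation estimate (extended slightly beyond $\pi/2$ by the same argument with small $\epsilon$) shows $T(\theta)$ is closed on $D$ with nonempty resolvent set. Hence $T$ is a holomorphic family of type (A) with compact resolvent. Since the eigenvalues are constant for real $\theta$, analytic perturbation theory (Kato) shows they remain constant in $\theta$ on the connected strip, and the spectrum is exactly these eigenvalues: the isolated eigenvalues move analytically and no new spectrum appears, because the resolvent is compact throughout.

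\textbf{Conclusion.} Take $\theta = i\pi/(2(\alpha+2))$, so that $e^{(\alpha+2)\theta} = i$. Then
\begin{equation*}
T(\theta) = e^{-i\pi/(\alpha+2)}(-\Delta + i\CV) = e^{-i\pi/(\alpha+2)} \CP ,
\end{equation*}
and therefore $\sigma(\CP) = e^{i\pi/(\alpha+2)}\sigma(\CQ)$. The main obstacle is the uniform separation estimate near $\arg = \pi/2$, where the positivity of the cross term is lost. It must be handled via the bound $\vert\nabla\CV\vert \lesssim \CV^{1-1/\alpha}$ at infinity together with local compactness near $0$, and this is exactly where the sign of $v$ enters. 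A second delicate point is making the constancy argument rigorous: the eigenvalue functions stay constant and no eigenvalues escape to infinity as $\theta$ moves off the real axis. This can be handled by continuity of the spectral projections together with compactness of the resolvent.
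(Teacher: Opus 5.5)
Your reduction to $v>0$, the separation estimate (essentially the paper's Lemma on $\CP^{\rho}$), the identification of $D(\CP)$ and $D(\CQ)$, and the family $T(\theta)$ (the paper's $\SQ_{\theta}$ with $\theta$ replaced by $i\theta$) are all fine.

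\textbf{The gap} is the step that transfers the spectrum from real $\theta$ to $\theta=i\pi/(2(\alpha+2))$. Kato's theory is local in the spectral parameter: near a given $\theta_{1}$ it only controls the eigenvalues of $T(\theta)$ lying in a fixed compact set of $z$. Compactness of each resolvent gives no uniform control as $\vert z\vert\to\infty$, so eigenvalues can enter from infinity or escape to infinity as $\theta$ moves. For instance, if $B_{0}$ has compact resolvent and empty spectrum, like $-\partial_{x}^{2}+ix$, the eigenvalues of $B_{0}+\kappa\<\cdot,f\>g$ are the zeros of $1+\kappa F(\lambda)$ with $F$ entire. So spectrum generically appears from infinity as soon as $\kappa\neq0$.

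Using only the real axis, the identity theorem on the symmetric functions of the $\lambda$-groups, together with the closedness of $\{\theta;\ \lambda\in\sigma(T(\theta))\}$, does show that each $\lambda_{0}\in\sigma(\CQ)$ stays an eigenvalue on the whole strip. But for $\lambda\notin\sigma(\CQ)$ it only shows that $\{\theta;\ \lambda\in\sigma(T(\theta))\}$ is a closed discrete subset of the strip. That set might be non-empty, and multiplicities might jump at isolated points. So ``no new spectrum appears, because the resolvent is compact throughout'' is not a valid reason. Continuity of spectral projections does not supply the missing control either.

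\textbf{The missing idea} is the group property off the real axis. The same computation you did for real $\theta$ gives $U_{s}T(\theta)U_{s}^{-1}=T(\theta+s)$ for every $\theta$ in the strip and every $s\in\R$, with $U_{s}$ unitary and preserving $D$. Hence $\sigma(T(\theta))$, with algebraic multiplicities, depends only on $\im\theta$. Now fix any $\theta_{1}$ and $\lambda$. The coefficients of the characteristic polynomial of $T(\theta)$ restricted to the range of the Riesz projection on a small fixed circle around $\lambda$ are holomorphic near $\theta_{1}$. They are constant on the horizontal segment through $\theta_{1}$, hence constant nearby. Together with stability of the resolvent set, this makes the algebraic multiplicity $m_{\lambda}(\theta)$ (zero if $\lambda\notin\sigma(T(\theta))$) locally constant, hence constant on the connected strip. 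That closes your argument.

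The paper avoids tracking eigenvalues altogether via the Aguilar--Combes device. It fixes $z$ near $-1$ in the resolvent set of every $\SQ_{\theta}$ with $\theta$ near $[0,\theta_{0}]$. For $u,v\in\C[x_{1},\dots,x_{d}]e^{-x^{2}}$, the matrix element $\<(\SQ_{\theta}-z)^{-1}\CU_{\theta}u,\CU_{-\overline{\theta}}v\>$ is holomorphic in $\theta$ and constant in the unitary directions. The paper continues the resulting identity meromorphically in $z$ and compares residues, using density of these vectors, to get equal spectra and ranks in both directions. Once repaired, your route is somewhat more elementary (no analytic vectors or density argument). The paper's route needs holomorphy of the resolvent at only one value of $z$.
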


In dimension $1$, we have $D ( \CP ) = \{ u \in H^{2} ( \R ) ; \ \vert x \vert^{\alpha} u \in L^{2} ( \R ) \}$ even if $v^{+}_{c}$ and $v^{-}_{c}$ do not have the same sign (see \eqref{a84}). Technically, it follows from \eqref{a71} which is always valid in that case. If the critical point $c$ belongs to the boundary of $X$, the corresponding operator $\CP$ is defined on $\R^{\pm}$ with the same boundary condition at $0$ than $P$ at $c$, the potential $\CV$ has a sign and the conclusions of Proposition \ref{a65} hold with the same proof mutatis mutandis.

In the previous result, the eigenvalues of $\CP$ and $\CQ$ have the same algebraic multiplicity. In dimension $d \geq 2$, $\CV ( x )$ has a sign iff $v$ does not vanish on $\S^{d - 1}$. Proposition \ref{a65} is proven using complex dilations. The case of the complex harmonic oscillator $- \Delta + i x^{2}$ is well known (see e.g. Section 2.5 of \cite{Sj19_01} and the references therein). When $\CV$ does not have a sign, it is no longer possible to use the complex dilations and we do not have a result like Proposition \ref{a65}. Nevertheless, if
\begin{equation} \label{a75}
\CP = - \partial_{x}^{2} + i v x^{n} ,
\end{equation}
on $L^{2} ( \R )$ with $n \in \N$ odd, \cite{Ar25_01} provides the following result. The case $n = 3$ had already been treated by Sibuya (see Chapter 6 of \cite{Si75_01} and the references therein) and the eigenvalues are known to be all real in that case (see Shin \cite{Sh02_01}). By symmetry, $- \partial_{x}^{2} + i v x^{n}$ and $- \partial_{x}^{2} - i v x^{n}$ have the same spectrum.

\begin{theorem}\sl \label{a66}
Let $\CP$ acting on $L^{2} ( \R )$ be as in \eqref{a75} with $n \geq 3$ odd and $v \in \R^{*}$. Then, there exist $R_{n , v} , q_{n , v} > 0$ such that
\begin{equation*}
\sigma ( \CP ) \cap B ( 0 , R_{n, v} )^{c} = \{ \lambda_{q} ; \ q \geq q_{n , v} \} ,
\end{equation*}
where the eigenvalues $\lambda_{q}$ satisfy the asymptotic
\begin{equation} \label{a67}
\lambda_{q} = \vert v \vert^{\frac{2}{n + 2}} \Big( \frac{ ( 2 q + 1 ) \pi}{K \nu ( n , \mathbf{k} )} \Big)^{\frac{2 n}{n + 2}}( 1 + o ( 1 ) ) ,
\end{equation}
as $q \to + \infty$, with
\begin{equation} \label{a68}
\nu ( n , \mathbf{k} ) = 2 \sin \Big( \frac{2 \pi \mathbf{k}}{n} \Big) + 2 \sin \Big( \frac{2 \pi (\mathbf{k}-1 )}{n} \Big) ,
\end{equation}
$\mathbf{k}$ is the unique integer such that $\frac{n}{4} < \mathbf{k} < \frac{n}{4} + 1$ and
\begin{equation*}
K = \int_{0}^{+ \infty} t^{\frac{n}{2}} \big( ( 1 + t^{-n} )^{\frac{1}{2}} - 1 \big) d t > 0 .
\end{equation*}
Moreover, these eigenvalues are simple and real.
\end{theorem}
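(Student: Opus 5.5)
The plan is to reduce the large-eigenvalue problem for \eqref{a75} to a semiclassical complex WKB problem and then derive a Bohr--Sommerfeld type quantization condition. By the symmetry noted before the statement, and by the scaling $x \mapsto \vert v \vert^{- \frac{1}{n + 2}} x$, which multiplies the spectrum by $\vert v \vert^{\frac{2}{n + 2}}$, we may assume $v = 1$. This explains the factor $\vert v \vert^{\frac{2}{n + 2}}$ in \eqref{a67}.

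For $\lambda$ large we set $\lambda = \rho\, e^{i \varphi}$ with $\varphi$ in a small fixed sector around $0$, and $x = \rho^{1/n} y$. The equation $( - \partial_{x}^{2} + i x^{n} - \lambda ) u = 0$ becomes
\begin{equation*}
- \hbar^{2} w^{\prime \prime} + ( i y^{n} - e^{i \varphi} ) w = 0 , \qquad \hbar = \rho^{- \frac{n + 2}{2 n}} .
\end{equation*}
This is an analytic ODE in $y \in \C$. Its turning points are the $n$ roots of $i y^{n} = e^{i \varphi}$, and the Stokes geometry is that of the quadratic differential $( i y^{n} - e^{i \varphi} )\, d y^{2}$.

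First, following Sibuya, we introduce the subdominant solutions $w_{\pm}$ that decay along the real axis as $y \to \pm \infty$. Since $n$ is odd, these directions lie in two distinct Stokes sectors. Then $\lambda$ is an eigenvalue iff the Wronskian $\mathcal{W}( w_{+} , w_{-} )$ vanishes; this is an entire function of $\lambda$ of finite order, and its order of vanishing equals the algebraic multiplicity. Second, we follow the Stokes lines of the quadratic differential from $+ \infty$ to $- \infty$. For $\varphi$ near $0$, the canonical path must pass between two neighbouring turning points $y_{\mathbf{k}}$ and $y_{\mathbf{k} - 1}$, chosen as the pair located closest to the real axis. The condition $\frac{n}{4} < \mathbf{k} < \frac{n}{4} + 1$ should select exactly this pair, and the sum of sines in \eqref{a68} should come from their imaginary/real parts. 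Third, standard exact (or uniform Airy-matched) WKB along this path gives
\begin{equation*}
\mathcal{W}( w_{+} , w_{-} ) \propto 2 \cos \Big( \hbar^{- 1} \int_{y_{\mathbf{k} - 1}}^{y_{\mathbf{k}}} ( i y^{n} - e^{i \varphi} )^{1/2}\, d y \Big) + \CO ( \hbar ) ,
\end{equation*}
after normalizing the contributions at infinity. These contributions involve the renormalized integrals $\int ( \sqrt{i y^{n} - 1} - \sqrt{i}\, y^{n/2} )$, which after the substitution $t = $ rescaled $\vert y \vert$ produce the constant $K$. Equating the phase to $( q + \frac12 ) \pi$ and solving for $\rho$ gives \eqref{a67}, with $\rho^{\frac{n + 2}{2 n}} K \nu ( n , \mathbf{k} ) / 2 = ( q + \frac12 ) \pi$ up to $o ( 1 )$.

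Fourth, we use a Rouché argument on small discs around each approximate root of the cosine. This shows that $\mathcal{W}$ has exactly one simple zero there, which gives simplicity. Combined with a bound showing that $\mathcal{W}$ does not vanish outside these discs for $\vert \lambda \vert \geq R$ (the cosine is bounded below away from its zeros, uniformly in $\varphi$), this gives the description $\{ \lambda_{q} ; \ q \geq q_{n , v} \}$. For reality we use the $PT$-symmetry $u ( x ) \mapsto \overline{u ( - x )}$, which commutes with $\CP$ since $n$ is odd, so $\sigma ( \CP ) = \overline{\sigma ( \CP )}$. A simple eigenvalue isolated in its disc, whose disc is symmetric about $\R$ because the approximate roots are real, must therefore be real.

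The main obstacle is the second and third steps. We need a global description of the Stokes graph for general odd $n$, uniformly for $\varphi$ in a sector. We also need error control of the WKB connection through the two turning points (rather than a single one) as $\hbar \to 0$, together with the precise evaluation of the action integral into the form $K \nu ( n , \mathbf{k} )$. We would first treat $\varphi = 0$ exactly, checking that no other turning point obstructs the canonical path, and then argue by continuity in $\varphi$.
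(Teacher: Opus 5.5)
The paper gives no proof of this statement. It is quoted from \cite{Ar25_01}, with Sibuya and Shin covering $n=3$, so your proposal has to stand on its own. Its outer shell is fine: the reduction to $v=1$, the Wronskian characterisation of eigenvalues and multiplicities (the geometric multiplicity is always one), the Rouch\'e step, and the $PT$ argument for reality all work once a correct asymptotic formula for $\mathcal W$ is in hand.

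The genuine gap is that formula, which is the whole content of the theorem and which you defer; the geometric picture it rests on is wrong. For $v=1$, $\varphi=0$ the turning points are $y_m=e^{i\pi(4m-1)/(2n)}$, $0\le m\le n-1$. The two closest to the real axis are $y_0=e^{-i\pi/(2n)}$ and $y_{(n+1)/2}=-\overline{y_0}$, lying just below $\R_+$ and $\R_-$. For $n\ge5$ they are \emph{not} neighbours: $(n-3)/2$ other turning points lie between them in the lower half-plane. Conversely, the consecutive pair that $\frac n4<\mathbf k<\frac n4+1$ singles out in this labelling sits at the top of the circle next to $i\R_+$, and its action is wrong. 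With $J=\int_0^1\sqrt{1-t^n}\,dt$, its modulus is $2\sin(\pi/n)J$ when $n\equiv1\bmod4$, and it is not even real when $n\equiv3\bmod4$. So ``neighbouring'', ``indexed by $\mathbf k-1,\mathbf k$'' and ``closest to the real axis'' cannot hold together. The condition compatible with \eqref{a67} is the Bohr--Sommerfeld rule for the non-adjacent pair:
\begin{equation*}
2\cos\Big(\frac{\pi}{2n}\Big)J\,\lambda^{\frac{n+2}{2n}}=\Big(q+\frac12\Big)\pi+o(1).
\end{equation*}
Here $2\cos(\frac{\pi}{2n})J$ is the modulus of $\int_{y_0}^{y_{(n+1)/2}}\sqrt{1-iy^n}\,dy$ along the two radial segments through $0$. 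Your root also has the wrong sign: $(iy^n-1)^{1/2}=\pm i(1-iy^n)^{1/2}$, so at $\varphi=0$ your phase is purely imaginary and your cosine never vanishes.

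The constant does not arise as you describe either. In your formula the normalisations at $\pm\infty$ only give non-vanishing prefactors of $\mathcal W$, so $K$ cannot enter the phase that way. The sines are not turning-point coordinates: the real parts of $y_0,y_{(n+1)/2}$ are $\pm\cos\frac{\pi}{2n}$. What you need instead is an identity. Integrating by parts gives $K=\frac{n}{n+2}\int_0^\infty(1+t^n)^{-1/2}dt$, and Beta functions with $\Gamma(\frac12-\frac1n)\Gamma(\frac12+\frac1n)=\pi/\cos\frac\pi n$ give $K=J/\cos\frac\pi n$. Since $4\mathbf k-n\in\{1,3\}$, one also has $\nu(n,\mathbf k)=4\cos\frac{\pi}{2n}\cos\frac\pi n$. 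Hence $K\nu/2=2\cos(\frac{\pi}{2n})J$, so your final relation is right, but not for the reason you give.

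Writing the answer as a renormalised integral from $0$ to $\infty$ times imaginary parts of consecutive $n$-th roots of unity is the bookkeeping of a Sibuya-type argument, presumably the route of \cite{Ar25_01}. There, the normalised subdominant solution is evaluated at $x=0$ at rotated spectral parameters, with asymptotics uniform in large $\lambda$-sectors. Turning-point WKB is a legitimate alternative, but then you must actually:
\begin{itemize}
\item construct the $PT$-symmetric saddle connection of $(1-iy^n)\,dy^2$ joining $y_0$ to $y_{(n+1)/2}$;
\item show it is homotopic, relative to the other turning points, to the radial path through $0$;
\item control the errors uniformly for $\varphi$ near $0$. Since $\varphi=0$ is the degenerate configuration, ``treat $\varphi=0$ and argue by continuity'' is not enough.
\end{itemize}
Finally, your analysis only covers $|\arg\lambda|\le\varphi_0$. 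Accretivity and Proposition \ref{a47} $i)$ exclude large eigenvalues only in $\{\re\lambda\le M\}$. Large eigenvalues with $\varphi_0\le|\arg\lambda|<\pi/2$ are therefore not ruled out. Without an asymptotic for $\mathcal W$ valid in that whole sector, $\sigma(\CP)\cap B(0,R_{n,v})^c=\{\lambda_q\}$ does not follow.
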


Finally, the resolvent of $\CP$ satisfies the following estimates, which will be useful in the proof of Theorem \ref{a7}.

\begin{proposition}[Resolvent estimates]\sl \label{a47}
Let $\CP$ be given by \eqref{a44}.

$i)$ For all $M > 0$, there exist $C , K > 0$ such that $\CP$ has no spectrum and
\begin{equation} \label{b88}
\Vert \partial_{x} ( \CP - z )^{- 1} \Vert + \Vert ( \CP - z )^{- 1} \Vert \leq C ,
\end{equation}
in $\{ z \in \C; \ \re z \leq M , \ \vert z \vert > K \}$.

$ii)$ For all $K > 0$, there exist $C , N > 0$ such that
\begin{equation} \label{b55}
\Vert \partial_{x} ( \CP - z )^{- 1} \Vert + \Vert ( \CP - z )^{- 1} \Vert \leq C + \frac{C}{\dist ( z , \sigma ( \CP ) )^{N}} ,
\end{equation}
for all $z \in B ( 0 , K ) \setminus \sigma ( \CP )$.
\end{proposition}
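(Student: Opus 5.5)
We argue in two steps: first a uniform bound for large $\vert z\vert$ in the half-plane $\re z\le M$, which proves $i)$; then a finite-dimensional reduction near the discrete spectrum, which proves $ii)$.

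\textbf{Part $i)$.} We prove an a priori estimate $\Vert u\Vert+\Vert\nabla u\Vert\le C\Vert(\CP-z)u\Vert$ for $u\in C_0^\infty(\R^d)$, uniformly for $\re z\le M$ and $\vert z\vert>K$. Once this holds, the density of $C_0^\infty$ in $D(\CP)$ and maximal accretivity (Proposition \ref{a46}) show that $\CP-z$ is invertible with the bound \eqref{b88}.

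Write $z=\mu+i\tau$. There are two regimes.
\begin{itemize}
\item If $\mu$ is very negative, accretivity gives $\re\<(\CP-z)u,u\>=\Vert\nabla u\Vert^2-\mu\Vert u\Vert^2$, which controls both $u$ and $\nabla u$.
\item If $\vert\mu\vert$ is bounded, we have $\vert\tau\vert\to\infty$. Here we localize with a partition of unity adapted to the level set $\{\CV=\tau\}$. By homogeneity and \eqref{a81}, this level set lies at distance $r\sim\vert\tau\vert^{1/\alpha}$ from the origin, and on it $\vert\nabla\CV\vert\gtrsim\vert\tau\vert^{(\alpha-1)/\alpha}$ is large.
\end{itemize}

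In the bounded-$\mu$ regime we handle the pieces of the partition separately.
\begin{itemize}
\item Where $\vert\CV-\tau\vert\gtrsim\vert\tau\vert$, we use $\im\<(\CP-z)u,u\>=\<(\CV-\tau)u,u\>$. This gives a bound of order $\vert\tau\vert^{-1}$ there.
\item Near the level set, we run a multiplier argument in the style of \cite{CoGa23_01}. We take $\im\<(\CP-z)u,\,\chi\,\sgn(\CV-\tau)u\>$ with a smoothed sign function, or a commutator with the vector field $\nabla\CV\cdot\nabla$. This yields the Airy-type bound $\Vert u\Vert\lesssim\vert\nabla\CV\vert^{-2/3}\Vert f\Vert$, which is $o(1)$.
\item The localization commutators are bounded by $\Vert\nabla u\Vert$ times the inverse of the partition scale.
\end{itemize}

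The $\nabla u$ control then follows from $\Vert\nabla u\Vert^2=\re\<f,u\>+\mu\Vert u\Vert^2$. Hence both $\Vert u\Vert$ and $\Vert\nabla u\Vert$ are bounded by $C\Vert f\Vert$, which proves $i)$.

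\textbf{Part $ii)$.} By compactness of the resolvent, $\sigma(\CP)\cap\overline{B(0,K+1)}$ is a finite set $\{z_1,\dots,z_m\}$. The resolvent is holomorphic off this set. Near each $z_k$ it has a Laurent expansion
$$(\CP-z)^{-1}=\sum_{j=1}^{N_k}\frac{A_{k,j}}{(z-z_k)^j}+\mathrm{hol}(z),$$
where $N_k$ is the nilpotency index of $\CP$ on the range of the Riesz projector at $z_k$ (the algebraic multiplicity, which is finite). Each $A_{k,j}$ has finite rank and range in $D(\CP)$.

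We now bound the two operators in \eqref{b55} on $B(0,K)$.
\begin{itemize}
\item $\Vert(\CP-z)^{-1}\Vert\le C+C\dist(z,\sigma(\CP))^{-N}$ with $N=\max_k N_k$. Indeed, away from the small discs around the $z_k$ the resolvent is continuous on a compact set, and inside each disc the Laurent expansion gives the estimate directly.
\item For $\partial_x(\CP-z)^{-1}$, we use the identity $\Vert\nabla w\Vert^2\le\re\<(\CP-z)w,w\>+\vert z\vert\Vert w\Vert^2$ with $w=(\CP-z)^{-1}g$. This gives $\Vert\nabla(\CP-z)^{-1}\Vert^2\lesssim\Vert(\CP-z)^{-1}\Vert+K\Vert(\CP-z)^{-1}\Vert^2$, which has the required form.
\end{itemize}

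\textbf{Main obstacle.} The key step is the uniform estimate near the level set in part $i)$ when $d\ge2$. The set $\{\CV=\tau\}$ is a noncompact hypersurface and varies with $\tau$, and $v$ changes sign. The partition scale therefore has to be chosen in terms of the local size of $\nabla\CV$, with uniformity obtained by homogeneity. The approach is to rescale $x=\vert\tau\vert^{1/\alpha}y$. This turns the problem into a semiclassical operator $-\tilde h^2\Delta+i(\CV(y)\mp1)$ with $\tilde h=\vert\tau\vert^{-(\alpha+2)/(2\alpha)}$. This operator is non-critical on its zero set, because \eqref{a81} gives $\nabla\CV\ne0$ wherever $\CV=\pm1$. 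The non-critical semiclassical estimate of Remark \ref{a94} then gives a bound $\tilde h^{-2/3}$ for the rescaled operator. Undoing the scaling produces a factor $\vert\tau\vert^{-1}\tilde h^{-2/3}=\vert\tau\vert^{-(\alpha-1)/(3\alpha)}$, which tends to $0$ since $\alpha>1$; this closes part $i)$.

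That remark was stated only on compact $X$. The noncompact region $\vert y\vert\to\infty$ is handled by the elementary estimate in the region where $\vert\CV(y)\mp1\vert\gtrsim1$, together with the growth of $\vert\nabla\CV\vert$ at infinity.
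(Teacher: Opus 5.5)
Your part $ii)$ is the paper's argument: finite-order poles of the resolvent near $\overline{B(0,K)}$, then $\Vert\nabla w\Vert^{2}=\re\<(\CP-z)w,w\>+\re z\,\Vert w\Vert^{2}$ to pass to the gradient.

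For part $i)$ you use the same mechanism as the paper. That mechanism is a Coti Zelati--Gallay sign-type multiplier away from the level set, Lemma \ref{b18} in a thin band around it, and accretivity for $\nabla u$. You package it differently. You rescale $x=\vert\tau\vert^{1/\alpha}y$ and invoke the non-critical $\tilde h^{2/3}$ bound of Remark \ref{a94}. This yields the explicit rate $\vert\tau\vert^{-1}\tilde h^{-2/3}=\vert\tau\vert^{-2(\alpha-1)/(3\alpha)}$. Your exponent $-(\alpha-1)/(3\alpha)$ is an arithmetic slip; the corrected one matches your Airy heuristic $\vert\nabla\CV\vert^{-2/3}$, and either suffices.

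The paper does not rescale. It keeps $\delta$ small and $\rho$ large fixed and lets $\lambda\to\infty$. On $\{\vert\CV-\lambda\vert\le\rho\}$ one has $\vert x\vert\gtrsim\lambda^{1/\alpha}$, so the $\delta$-neighbourhood $\CM_{\lambda}$ of the band has width $\lesssim\delta+\rho\lambda^{(1-\alpha)/\alpha}\lesssim\delta$. This gives $\Vert(\CP-i\lambda)^{-1}\Vert\to0$ without a rate, which is all $i)$ requires. It also avoids re-proving Remark \ref{a94}, which the paper only sketches and only on compact $X$, on $\R^{d}$.

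That extension is where your write-up is thin. When $v$ changes sign, $\{\CV=\pm1\}$ is noncompact, since it escapes to infinity along the cone $\{v=0\}$. Because $\CV$ is only $C^{1}$, the band estimate needs straightening charts with \emph{uniform} bi-Lipschitz constants along the whole level set. Growth of $\vert\nabla\CV\vert$ alone does not give this, but homogeneity does, and this is exactly what the paper uses:
\begin{itemize}
\item for $\xi=0$ the conical charts \eqref{d13} are valid for all $r>0$, with the uniform bounds \eqref{d20};
\item in these charts $\CV=\widetilde x_{2}\widetilde x_{1}^{\alpha-1}$;
\item the argument of \eqref{d31} confines the $\delta$-neighbourhood of the band to $\{\vert\widetilde x_{2}\mp\widetilde x_{1}^{1-\alpha}\vert\le C\delta\}$, so Lemma \ref{b18} applies uniformly in the variable $\widetilde x_{2}$.
\end{itemize}
With this made explicit, your argument closes.

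Your first bullet, $\im\<(\CP-z)u,u\>=\<(\CV-\tau)u,u\>$, controls nothing on its own, since $\CV-\tau$ changes sign. You need the sign multiplier of your second bullet. The paper takes it as a single global Lipschitz $\chi$, so no partition-of-unity commutators enter the multiplier step.
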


One could also prove that $z \mapsto \Vert ( \CP - z )^{- 1} \Vert$ decays polynomially on the vertical lines. Using the relation \eqref{a32}, Proposition \ref{a47} implies the following resolvent estimate for the semiclassical operator $P_{c}$ introduced in \eqref{a48}.

\begin{corollary}\sl \label{a49}
For all $M , \gamma > 0$, there exists $C > 0$ such that
\begin{equation*}
\Vert ( P_{c} - z )^{- 1} \Vert + h^{\beta} \Vert \partial_{x} ( P_{c} - z)^{- 1} \Vert \leq C h^{- \sigma} ,
\end{equation*}
for $h > 0$ and $z \in ( [ 0 , M h^{\sigma} ] + i \R ) \setminus ( i E + h^{\sigma} \Lambda_{c} + B ( 0 , \gamma h^{\sigma} ) )$.
\end{corollary}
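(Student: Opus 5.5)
The plan is to transfer Proposition~\ref{a47} to the semiclassical operator through the unitary conjugation \eqref{a32}. Since $U_{c}$ is an isometry of $L^{2} ( \R^{d} )$ and $U_{c} P_{c} U_{c}^{*} = h^{\sigma} \CP_{c} + i E$, we get for every $z \notin i E + h^{\sigma} \Lambda_{c}$
\begin{equation*}
( P_{c} - z )^{- 1} = U_{c}^{*} \big( h^{\sigma} ( \CP_{c} - \zeta ) \big)^{- 1} U_{c} = h^{- \sigma} U_{c}^{*} ( \CP_{c} - \zeta )^{- 1} U_{c} , \qquad \zeta = h^{- \sigma} ( z - i E ) .
\end{equation*}
This yields $\Vert ( P_{c} - z )^{- 1} \Vert = h^{- \sigma} \Vert ( \CP_{c} - \zeta )^{- 1} \Vert$. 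For the derivative, we use the scaling $U_{c} \partial_{x} U_{c}^{*} = h^{- \beta} \partial_{x}$, which follows from the chain rule in \eqref{a5}. It gives $h^{\beta} \Vert \partial_{x} ( P_{c} - z )^{- 1} \Vert = h^{- \sigma} \Vert \partial_{x} ( \CP_{c} - \zeta )^{- 1} \Vert$. The corollary is therefore equivalent to a uniform bound on $\Vert \partial_{x} ( \CP_{c} - \zeta )^{- 1} \Vert + \Vert ( \CP_{c} - \zeta )^{- 1} \Vert$, independent of $h$.

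Next we translate the set of admissible $z$. The condition $z \in [ 0 , M h^{\sigma} ] + i \R$ becomes $\re \zeta \in [ 0 , M ]$. The condition $z \notin i E + h^{\sigma} \Lambda_{c} + B ( 0 , \gamma h^{\sigma} )$ becomes $\dist ( \zeta , \Lambda_{c} ) \geq \gamma$. So it suffices to bound the model resolvent and its gradient uniformly on
\begin{equation*}
\{ \zeta ; \ 0 \leq \re \zeta \leq M , \ \dist ( \zeta , \sigma ( \CP_{c} ) ) \geq \gamma \} ,
\end{equation*}
which involves $h$ no longer.

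To get this bound, apply Proposition~\ref{a47} $i)$ with this $M$; it provides constants $C_{1} , K$. For $\vert \zeta \vert > K$, the operator $\CP_{c}$ has no spectrum and the bound $C_{1}$ holds directly. For $\vert \zeta \vert \leq K$, apply Proposition~\ref{a47} $ii)$ with this $K$. It gives the bound $C_{2} + C_{2} \gamma^{- N}$, because $\dist ( \zeta , \sigma ( \CP_{c} ) ) \geq \gamma$ there. Taking the maximum of the two constants gives the claim. If $c \in \partial X$ in dimension one, the same argument runs on the half-line model.

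The only point needing care is the commutation of $\partial_{x}$ with the dilation and the precise power of $h$. Everything else is a bookkeeping consequence of Proposition~\ref{a47}, so I expect no real obstacle.
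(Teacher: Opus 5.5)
Your proposal is correct and is the argument the paper has in mind: the paper simply states that Corollary~\ref{a49} follows from \eqref{a32} and Proposition~\ref{a47}, and you supply exactly that deduction. You conjugate by $U_{c}$, rescale to $\zeta = h^{-\sigma}(z - iE)$ using $U_{c} \partial_{x} U_{c}^{*} = h^{-\beta} \partial_{x}$, and combine parts $i)$ and $ii)$ of Proposition~\ref{a47} on the $h$-independent set $\{ 0 \leq \re \zeta \leq M , \ \dist ( \zeta , \Lambda_{c} ) \geq \gamma \}$. One small fix: $ii)$ is stated on the open ball $B(0,K)$, so apply it with $2K$ in order to cover $\vert \zeta \vert = K$.
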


\subsection{Application to time evolution} \label{s6}

We give in this section an application of our main theorem to the heat equation associated to $P$. We assume that $X$ is as in Section \ref{s4} and we let $P$ be as in \eqref{a12} with either Dirichlet or Neumann boundary condition when $X$ is a bounded subset of $\R^{d}$. Consider the evolution equation in $X$ for $t \geq 0$
\begin{equation} \label{b37}
\partial_{t} u + P u = 0 ,
\end{equation}
with initial data $u_{\vert t = 0 } = u_{0} \in L^{2} ( X )$. Since $P$ is maximal accretive, this equation admits a unique solution $u ( t ) = e^{- t P } u_{0}$. In the next theorem, we assume that Assumption~\ref{h1} holds true for all $E \in \R$. Since $V$ is bounded, the discussion below Remark \ref{a85} implies that the set $\CC^{\rm glob} = \cup_{E \in \R} \CC ( E )$ of critical points of $V$ is finite. Recall that $\alpha_{c} > 1$ is given in Assumption~\ref{h1} for any $c \in \CC^{\rm glob} $. We denote $\boldsymbol{\alpha}=\max \{ \alpha_{c} ; \ c \in \CC^{\rm glob} \}$, $\boldsymbol{\sigma} = \frac {2 \boldsymbol{\alpha}}{\boldsymbol{\alpha} + 2}$, $\boldsymbol{\CC} = \{ c \in \CC^{\rm glob} ; \ \alpha_{c}=\boldsymbol{\alpha}\}$ and $\boldsymbol{\CE} = \{ V ( c ) ; \  c \in \boldsymbol{\CC} \}$ some quantities corresponding to the most degenerate critical points of $V$. Note that $\CC_{\max} ( E ) = \boldsymbol{\CC} \cap V^{- 1} ( E )$ for $E \in \boldsymbol{\CE}$. We define for $E \in \boldsymbol{\CE}$
\begin{equation*}
\boldsymbol{\Lambda} ( E ) = \cup_{c \in \CC_{\max} ( E )} \Lambda_c \qquad \text{and} \qquad \boldsymbol{\Lambda} = \cup_{c \in \boldsymbol{\CC}} \Lambda_{c} = \cup_{E \in \boldsymbol\CE} \boldsymbol{\Lambda} ( E ) ,
\end{equation*}
the union of the spectra of the model operators $\CP_{c}$ associated to the most degenerate critical points respectively at the energy $E$ or globally. The set $\re \boldsymbol{\Lambda}$ is either finite or consists of an increasing sequence of positive numbers which goes to $+ \infty$. For $M \notin  \re \boldsymbol{\Lambda}$, we define
\begin{equation*}
\boldsymbol{\Lambda}^{M} ( E ) = \boldsymbol{\Lambda} ( E ) \cap \{ \re z < M \} \qquad \text{and} \qquad r_{M} = \min \{ \re z ; \ z \in \boldsymbol{\Lambda} , \ \re z > M \} ,
\end{equation*}
with the convention that $r_{M}$ is any number greater than $M$ if $\{ z \in \boldsymbol{\Lambda} ; \ \re z > M \}  = \emptyset$. Observe that thanks to Proposition \ref{a6}, the sets $\boldsymbol{\Lambda}^{M} ( E )$ are finite. For any $\mu \in \boldsymbol{\Lambda}^{M} ( E )$, we denote by $\gamma_{E , \mu} = \frac{1}{4} \min_{\mu^{\prime} \in \boldsymbol{\Lambda}^{M} ( E ) , \; \mu^{\prime} \neq \mu} \vert \mu^{\prime} - \mu \vert > 0$ (with the convention $\gamma_{E , \mu} = 1$ if $\boldsymbol{\Lambda}^{M} ( E ) \setminus \{ \mu \}$ is empty) and we define the spectral projector 
\begin{equation} \label{b39}
\Pi_{E , \mu} =\frac{1}{2 i \pi} \oint_{\vert z - i E - h^{\boldsymbol{\sigma}} \mu \vert =\gamma_{E , \mu} h^{\boldsymbol{\sigma}}} ( z - P )^{- 1} d z .
\end{equation}
Thanks to Theorem \ref{a7}, for $h > 0$ small enough $\Pi_{E , \mu}$ has finite rank equal to the sum over $c \in \CC_{\max} ( E )$ of the algebraic multiplicity of $\mu$ as an eigenvalue of $\CP_{c}$. Moreover, it follows from \eqref{a8}, that there exists $C > 0$ such that $\Vert \Pi_{E , \mu} \Vert \leq C$ for $h > 0$ small enough.

\begin{theorem}[eigenmode expansion]\sl \label{b40}
Suppose that Assumption~\ref{h1} holds true for all $E \in \R$ and let $M \notin \re \boldsymbol{\Lambda}$. Then, for all $\varepsilon>0$, one has uniformly for $h$ small enough and $t \geq 0$
\begin{equation} \label{b41}
e^{- t P} = \sum_{E \in \boldsymbol{\CE} , \; \mu \in\boldsymbol{\Lambda}^{M} ( E )} e^{- t P} \Pi_{E , \mu} + \CO \big( e^{-t h^{\boldsymbol{\sigma}} ( r_{M} - \varepsilon )} \big) ,
\end{equation}
as operators on $L^{2} ( X )$. Moreover, for all $f \in L^{2} ( X )$, there exist some $f_{E , \mu , \lambda , s} \in L^{2} ( X )$ such that
\begin{equation} \label{b42}
e^{- t P} f = \sum_{E , \mu , \lambda} e^{- \lambda t} \sum_{0 \leq s \leq m ( \lambda ) - 1} t^{s} f_{E , \mu , \lambda , s} + \CO \big( e^{-t h^{\boldsymbol{\sigma}} ( r_{M} - \varepsilon )} \big) ,
\end{equation}
uniformly for $h$ small enough and $t \geq 0$, where $m ( \lambda )$ denotes the algebraic multiplicity of $\lambda \in \sigma ( P )$ and the first sum is taken over $E \in \boldsymbol{\CE} , \mu \in \boldsymbol{\Lambda}^{M} ( E ) , \lambda \in \sigma ( P ) \cap B_{E , \mu}$ with $ B_{E , \mu} = B ( i E + h^{\boldsymbol{\sigma}} \mu , h^{\boldsymbol{\sigma}} \gamma_{E , \mu} )$. Finally, one has 
\begin{equation} \label{b43}
f_{E , \lambda , \mu , s} = \Big( \sum_{c \in \boldsymbol{\CC}} \varphi_{c} \Pi_{c} \varphi_{c}  + o ( 1 ) \Big) f_{E , \lambda , \mu , s} ,
\end{equation}
where the cut-off function $\varphi_ {c}$ is defined in Section \ref{s9} and the spectral projector $\Pi_{c}$ of $P_{c}$ associated to the eigenvalue $i E + h^{\boldsymbol{\sigma}} \mu$ is given by \eqref{a37}.
\end{theorem}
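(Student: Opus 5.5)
We prove Theorem~\ref{b40} with the Dunford--Cauchy representation of the semigroup and a contour deformation. The global resolvent bound comes from Theorem~\ref{a7} and Remark~\ref{a94}, combined by compactness in the energy variable.

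\emph{Step 1: a resolvent estimate on a vertical strip.} We first show that, for $M' \in ] M , r_{M} [$ with $M' \notin \re \boldsymbol{\Lambda}$ chosen close to $r_{M} - \varepsilon$, there is $C > 0$ with
\[
\Vert ( P - z )^{- 1} \Vert \leq C h^{- \boldsymbol{\sigma}}
\]
for $h$ small and all $z$ in the strip $\{ \re z \leq M' h^{\boldsymbol{\sigma}} \}$, except the discs $B_{E , \mu}$ attached to $E \in \boldsymbol{\CE}$ and $\mu \in \boldsymbol{\Lambda}^{M} ( E )$. Since $V$ is bounded and $P$ is accretive, only $\im z \in V ( \overline{X} ) + [ - 1 , 1 ]$ matters. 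For $\vert \im z \vert$ large we use $\re \< ( P - z ) u , u \> \geq - \re z \Vert u \Vert^{2}$ together with $\im \< ( P - z ) u , u \> = \< ( V - \im z ) u , u \>$, whose modulus is at least $\Vert u \Vert^{2}$ there.

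\emph{Step 2: covering the energy interval.} Cover the compact interval of energies by finitely many windows $[ E - \varepsilon_{E} , E + \varepsilon_{E} ]$, each given by Theorem~\ref{a7} (if $\CC ( E ) \neq \emptyset$) or by Remark~\ref{a94} (if $E$ is regular). The windows are $h$-independent, which is what makes this compactness argument possible.
\begin{itemize}
\item At a regular energy, the bound is even $h^{- 2/3} \leq h^{- \boldsymbol{\sigma}}$, since $\boldsymbol{\sigma} > 2/3$.
\item At a critical energy with $\alpha ( E ) < \boldsymbol{\alpha}$, we have $\sigma ( E ) < \boldsymbol{\sigma}$. For small $h$ the region $\re z \leq M' h^{\boldsymbol{\sigma}}$ then lies inside $[ 0 , m h^{\sigma ( E )} ]$ with $m$ below the first real part of the local models, so Theorem~\ref{a7} gives the bound $h^{- \sigma ( E )} \leq h^{- \boldsymbol{\sigma}}$ with no eigenvalues.
\item At $E \in \boldsymbol{\CE}$, apply Theorem~\ref{a7} with $M'$. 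The eigenvalues in $\re z \leq M' h^{\boldsymbol{\sigma}}$ are $o ( h^{\boldsymbol{\sigma}} )$-close to $i E + h^{\boldsymbol{\sigma}} \mu$ with $\re \mu < M$, because no element of $\boldsymbol{\Lambda}$ has real part in $] M , M' ]$. They therefore lie in the discs $B_{E , \mu}$. Estimate \eqref{a8} with a $\gamma$ smaller than the $\gamma_{E , \mu}$ gives the bound outside the discs.
\end{itemize}

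\emph{Step 3: contour deformation for \eqref{b41}.} Write $\Pi = \sum \Pi_{E , \mu}$ and $Q = \Id - \Pi$. The operator $Q$ commutes with $P$, and $P_{\vert \mathrm{Ran}\, Q}$ has no spectrum in $\re z \leq M' h^{\boldsymbol{\sigma}}$. Its resolvent $( P - z )^{- 1} Q$ is holomorphic there and bounded by $C h^{- \boldsymbol{\sigma}}$: inside the discs one uses the maximum principle together with the uniform bound $\Vert \Pi_{E , \mu} \Vert \leq C$.

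Converting this resolvent bound into semigroup decay is the main obstacle. The Gearhart--Pr\"uss theorem in its quantitative form (Helffer--Sj\"ostrand, Latushkin--Yurov) applies to the maximal accretive semigroup $e^{- t P} Q$, which satisfies $\Vert e^{- t P} Q \Vert \leq C$. It gives
\[
\Vert e^{- t P} Q \Vert \leq C' e^{- t ( M' h^{\boldsymbol{\sigma}} - \eta )}
\]
whenever $\sup_{\re z = M' h^{\boldsymbol{\sigma}}} \Vert ( P - z )^{- 1} Q \Vert \leq 1 / \eta$. We take $\eta = \varepsilon' h^{\boldsymbol{\sigma}}$ with the constants arranged so that $M' - \varepsilon' \geq r_{M} - \varepsilon$. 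The version we use needs a constant independent of $h$, namely $\Vert e^{- t A} \Vert \leq 1$-type bounds plus $\sup \Vert R \Vert \leq r$ implying $\Vert e^{- t A} \Vert \leq e^{- t ( a - 1/r )}$ up to a constant. The Helffer--Sj\"ostrand version gives exactly such a uniform constant, which yields \eqref{b41}.

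\emph{Step 4: the finite-dimensional part \eqref{b42}--\eqref{b43}.} On the finite-rank range of $\Pi_{E , \mu}$, decompose into Jordan blocks of $P$. This gives $e^{- t P} \Pi_{E , \mu} f = \sum_{\lambda} e^{- \lambda t} \sum_{s} \frac{t^{s}}{s!} ( \lambda - P )^{s} \Pi_{\lambda} f$, which is \eqref{b42}. Each $f_{E , \mu , \lambda , s}$ lies in $\mathrm{Ran}\, \Pi_{E , \mu}$. Relation \eqref{b43} then follows from the comparison of $\Pi_{E , \mu}$ with $\sum_{c} \varphi_{c} \Pi_{c} \varphi_{c}$ established in the proof of Theorem~\ref{a7} (cf.\ \eqref{a38}), since $\Pi_{E , \mu} f_{E , \mu , \lambda , s} = f_{E , \mu , \lambda , s}$.
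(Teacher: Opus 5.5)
Your proposal is correct and follows essentially the paper's proof. Like the paper, you split off $\Pi_{\leq M}$, bound the resolvent of $P$ on $\mathrm{Ran} ( \Id - \Pi_{\leq M} )$ by $C h^{- \boldsymbol{\sigma}}$ on the half-plane (Theorem~\ref{a7}, Remark~\ref{a94} and compactness in energy, plus the maximum principle inside the discs $B_{E , \mu}$), apply the Helffer--Sj\"ostrand quantitative Gearhart--Pr\"uss bound, and conclude with the Jordan decomposition and \eqref{a38}; the only differences are cosmetic. You spell out the energy covering that the paper compresses into one sentence, and you work on the line $\re z = M^{\prime} h^{\boldsymbol{\sigma}}$ with a small rate loss, whereas the paper applies $C ( 1 + \omega r ( \omega ) ) e^{- \omega t}$ directly at $\omega = h^{\boldsymbol{\sigma}} ( r_{M} - \varepsilon )$ with $r ( \omega ) = \CO ( h^{- \boldsymbol{\sigma}} )$; also, the theorem should be applied to $P$ restricted to $\mathrm{Ran} ( \Id - \Pi_{\leq M} )$, which is a genuine contraction semigroup there, rather than to ``$e^{- t P} Q$'' on $L^{2} ( X )$, which is not a $C_{0}$-semigroup.
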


Let us discuss the results of this theorem. First, equation \eqref{b41} with $M = 0$ implies that the solution of \eqref{b37} decays exponentially to zero with decay rate given by a spectral gap. More precisely, for all $\varepsilon > 0$, one has for $h$ small enough
\begin{equation} \label{b44}
\Vert e^{- t P} f \Vert \leq C e^{- t h^{\boldsymbol{\sigma}} ( \mu_{0} - \varepsilon )} \Vert f \Vert ,
\end{equation}
where $\mu_{0} = \inf_{\mu \in \boldsymbol{\Lambda}} \re \mu$ and $C > 0$ is a constant independent of $h$. In the case where $\boldsymbol{\Lambda}$ is non-empty, this rate of decay is sharp as shown by taking $f \in L^{2} ( X )$ a normalized eigenfunction of $P$ associated to an eigenvalue of minimal real part. Similar sharpness results were already proved in \cite{AlHe16_01} for non-critical potentials and \cite{CoGa23_01}, \cite{He13_02} for Morse potentials.

For some $M \notin \re \boldsymbol{\Lambda}$, assume that the projector $\Pi_{E , \mu}$ is a rank one operator for all $E \in \boldsymbol{\CE}$ and $\mu \in \boldsymbol{\Lambda}^{M} ( E )$. Then, $\sigma ( P ) \cap B_{E , \mu}$ is made of one simple eigenvalue $\lambda ( \mu )$ and \eqref{b42} writes
 \begin{equation}
e^{- t P} f = \sum_{E , \mu} e^{- \lambda ( \mu ) t} f_{E , \mu} + \CO \big( e^{- t h^{\boldsymbol{\sigma}} ( r_{M} - \varepsilon )} \big) ,
\end{equation}
with $f_{E , \mu} = \Pi_{E , \mu} f$.

More generally, assume that $\sigma ( P ) \cap B_{E , \mu} $ is reduced to one single eigenvalue $\lambda$ of multiplicity $m ( \lambda ) \geq 1$ for some $E \in \boldsymbol{\CE}$ and $\mu \in \boldsymbol{\Lambda}^{M} ( E )$. Then,
\begin{equation*}
f_{E , \mu , \lambda , s} = \frac{1}{2 i \pi} \oint_{\vert z - i E - h^{\boldsymbol{\sigma}} \mu \vert = \gamma_{E , \mu} h^{\boldsymbol{\sigma}}} ( z - \lambda )^{s} ( z - P )^{- 1} f \, d z .
\end{equation*}
In particular, $\Vert f_{E , \mu , \lambda , s} \Vert \leq C h^{\boldsymbol{\sigma} s} \Vert f \Vert$ for $0 \leq s \leq m ( \lambda ) - 1$ since $\lambda = i E + h^{\boldsymbol{\sigma}} \mu + o ( h^{\boldsymbol{\sigma}} )$.

The above theorem gives also more precise informations on the dynamics of shear flows widely studied in the last decade (see e.g. \cite{AlBeNo22_01},  \cite{BeCoZ17_01}, \cite{We21_01}). Consider the equation
\begin{equation} \label{b45}
\partial_{t} u + V ( x ) \partial_{y} u - \nu \Delta_{x , y} u = 0 ,
\end{equation}
where the unknown function $u$ depends on $t \in \R$, $x \in\Pi^{d}$ and $y \in \Pi$ and $\Delta_{x , y}$ denotes the Laplace operator in the $(x , y)$-variables. Expanding $u$ in partial Fourier series in the $y$-variable yields to
\begin{equation} \label{B50}
\partial_{t} u_{k} + i k V ( x ) u_{k} - \nu ( \Delta_{x} - k^{2} ) u_{k} = 0 ,
\end{equation}
where $k \in \Z$ and $\Delta_x$ is the Laplace operator in the $x$-variable. Setting $h = \sqrt{\nu / \vert k \vert}$ and $\tau = \sgn k$, it follows from \eqref{b44} that for all $k \neq 0$ one has
\begin{equation*}
\big\Vert e^{t \nu k^{2}} u_{k} ( t ) \big\Vert \leq C e^{- t \mu_{0} ( \tau ) \nu^{\frac{\alpha}{\alpha + 2}} \vert k \vert^{\frac{2}{\alpha+2}} ( 1 - \varepsilon )} \Vert u_{k} ( 0 ) \Vert ,
\end{equation*}
where $\mu_{0} ( \tau ) = \inf_{\mu \in \boldsymbol{\Lambda}} \re \mu$, the set $\boldsymbol{\Lambda}$ being associated with the potential $\tau V$. This estimate allows to recover and precise the decay rate of solutions of \eqref{B50}. The time scale $\nu^{\frac{\alpha}{\alpha + 2}} \vert k \vert^{\frac{2}{\alpha + 2}}$ was already obtained in \cite{AlBeNo22_01}, \cite{BeCoZ17_01}, \cite{CoGa23_01}, \cite{We21_01}. Here we obtain the precise prefactor $\mu_{0} ( \tau )$ in terms of spectral quantities. Moreover, by taking $u_{k} ( 0 )$ equal to any eigenvector of $- h^{2} \Delta_{x} + i \tau V ( x )$ (when it exists), we obtain immediately that this rate of decay is sharp (see \cite{CoZDr21_01} for similar result by probabilistic approach). Of course, one could also use Theorem \ref{b40} to get an eigenmode expansion of solutions of \eqref{b45}.

\begin{proof}[Proof of Theorem \ref{b40}]
Let us denote $\Pi_{\leq M} = \sum_{E \in \boldsymbol{\CE} , \; \mu \in \boldsymbol{\Lambda}^{M} ( E )} \Pi_{E , \mu}$ and observe that by definition of $\gamma_{E , \mu}$, the ball $B_{E , \mu}$ are disjoints and hence $\Pi_{\leq M}$ is a projector. We write 
\begin{equation} \label{b46}
e^{- t P} = e^{- t P} \Pi_{\leq M} + e^{- t P} ( 1 - \Pi_{\leq M} ) = \sum_{E \in \boldsymbol{\CE} , \; \mu \in \boldsymbol{\Lambda}^{M} ( E )} e^{- t P} \Pi_{E , \mu} + e^{- t P} ( 1 -\Pi_{\leq M} ) .
\end{equation}
To estimate the last term in \eqref{b46}, we use a Gearhart--Pr\"{u}ss type inequality with an explicit bound. Let $Q : \im ( 1 - \Pi_{\leq M} )\to \im ( 1 - \Pi_{\leq M} )$ be the operator $P$ restricted to the Hilbert space $\im ( 1 - \Pi_{\leq M} )$. Since $P$ is maximal accretive, so is $Q$ and one has $e^{- t P} ( 1 - \Pi_{\leq M} ) = e^{- t Q} ( 1 - \Pi_{\leq M} )$ and $\Vert e^{- t Q } \Vert \leq 1$. From Theorem \ref{a7} and by definition of $r_{M}$, one knows that for $0 < \varepsilon < ( r_{M} - M ) / 8$ and $h > 0$ small enough, one has $\sigma ( P ) \cap \{ h^{\boldsymbol{\sigma}} M \leq \re z \leq h^{\boldsymbol{\sigma}} ( r_{M} - \varepsilon ) \} = \emptyset$ and
\begin{equation*}
\sigma ( P ) \cap \{ \re z \leq h^{\boldsymbol{\sigma}} ( r_{M} - \varepsilon ) \} \subset \cup_{E \in \boldsymbol{\CE} ,\; \mu \in \boldsymbol{\Lambda}^{M} ( E )} B_{E , \mu} .
\end{equation*}
Moreover, denoting $\Sigma_{M , \varepsilon} = \{ \re z \leq h^{\boldsymbol{\sigma}} ( r_{M} - \varepsilon ) \}\setminus \cup_{E \in \boldsymbol{\CE} , \; \mu \in \boldsymbol{\Lambda}^{M} ( E )} B_{E , \mu}$, it follows from \eqref{a8} that there exists $C > 0$ such that 
\begin{equation} \label{b47}
\forall z \in \Sigma_{M,\varepsilon} , \qquad \Vert ( P - z )^{- 1} \Vert \leq C h^{- \boldsymbol{\sigma}} .
\end{equation}
This follows actually from \eqref{a8} for $\im z$ near $V ( \overline{X} )$ and from $\Vert ( P - z )^{- 1}\Vert\leq C$ for $\im z$ outside any neighborhood of $V ( \overline{X} )$. In particular, we get
\begin{equation*}
\forall z \in\Sigma_{M , \varepsilon} , \qquad \Vert ( Q - z )^{- 1} \Vert \leq C h^{- \boldsymbol{\sigma}} .
\end{equation*}
Since the application $z \mapsto ( Q - z)^{- 1}$ is holomorphic on $\{ \re z < h^{\boldsymbol{\sigma}} ( r_{M} - \varepsilon ) \}$, the maximum principle implies that
\begin{equation} \label{b48}
\forall \re z \leq h^{\boldsymbol{\sigma}} ( r_{M} - \varepsilon ) , \qquad \Vert ( Q - z )^{- 1} \Vert \leq C h^{-\boldsymbol{\sigma}} .
\end{equation}
Hence, we deduce from \cite[Theorem 1.4]{HeSj21_01} (see \cite[Proposition 2.1]{HeSj10_01} for more details) that for some $C > 0$ and all $t \geq 0$,
\begin{equation*}
\big\Vert e^{- t Q } \big\Vert \leq C \Big( 1 + C h^{\boldsymbol{\sigma}} \sup_{\re z = h^{\boldsymbol{\sigma}} ( r_{M} - \varepsilon )} \Vert ( Q - z )^{- 1} \Vert \Big) e^{- t h^{\boldsymbol{\sigma}} ( r_{M} - \varepsilon )} .
\end{equation*}
 Combined with \eqref{b48}, it implies the existence of $C>0$ such that, for all $t \geq 0$,
\begin{equation*} 
\big\Vert e^{- t P} ( 1 - \Pi_{\leq M} ) \big\Vert \leq C e^{- t h^{\boldsymbol{\sigma}} ( r_{M} - \varepsilon )} ,
\end{equation*}
which completes the proof of the first estimate. The second expansion follows directly from the usual formula for the exponential of a matrix applied to $e^{- t P} \Pi_{\leq M}$. Of course, in this expansion, one has $f_{E , \mu , \lambda , s} \in \im \Pi_{E , \mu}$. Hence, \eqref{b43} follows from \eqref{a38} below.
\end{proof}

\noindent
\textbf{Acknowledgements.} The authors are grateful to F. Sueur and M. Zworski for helpful discussions and for providing references on shear flows. The authors are supported by ANR projects SpecDiMa and DySLoS. The first author is also supported by ANR project STENTOR.

\section{Asymptotic of the eigenvalues}

\subsection{Construction of an auxiliary operator}

The general strategy to prove Theorem \ref{a7} is to use a parametrix for $( P - z )^{- 1}$ with $z$ at distance $h^{\sigma}$ from the spectrum (see \cite[Section 4.4]{He13_02} for a similar idea in this context), we could also have used the famous Grushin problem method. In this part, we construct an auxiliary operator and show that it has no spectrum near the imaginary axis. Let $R \geq 1$ and $g$ be a function in $C_{0}^{\infty} ( \R ; [ 0 , 1 ] )$ such that $g = 1$ on $[- 1 , 1 ]$ and $\supp g \subset ] - 2 , 2 [$. We set, for $c \in \CC_{\max} ( E )$,
\begin{equation} \label{a9}
g_{c} ( x ) = g \Big( \frac{x - c}{R h^{\beta}} \Big) \qquad \text{ and } \qquad G ( x ) = \sum_{c \in \CC_{\max} ( E )} g_{c} ( x ) .
\end{equation}
If $X = \T^{1}$, we identify $\T^{1}$ with $\R$ in local coordinates near $c$ to construct $g_{c}$. We define the auxiliary operator $Q$ by 
\begin{equation}\label{a13}
Q = P + G^{2} ,
\end{equation}
with domain $D ( Q ) = D ( P )$. The idea is that adding the function $G^{2}$ makes the operator elliptic near the critical points of maximal vanishing. Then, $Q$ should have better resolvent estimates than $P$. This is the subject of the following proposition which is an adaptation of Section 2 of \cite{CoGa23_01} and whose proof is postponed to Section \ref{s1}.

\begin{proposition}\sl \label{a33}
Suppose that Assumption~\ref{h1} and \eqref{a9} hold true and that $\varepsilon > 0$ is small enough. Then, for all $M \geq 1$ and $R \geq R_{M}$ large enough, $Q$ has no spectrum in $\Omega = [ 0 , M h^{\sigma} ] + i [ E - \varepsilon , E + \varepsilon ]$ for $h$ small enough and
\begin{equation}
\Vert ( Q - z )^{- 1} \Vert + h^{\beta} \Vert \nabla ( Q - z )^{- 1} \Vert \leq M^{- 1} h^{- \sigma} ,
\end{equation}
for $z$ in that set.
\end{proposition}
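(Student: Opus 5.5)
We will prove an a priori estimate: for $u \in D(P)$ and $z \in \Omega$,
\begin{equation*}
M h^{\sigma} \Vert u \Vert + M h^{\sigma+\beta} \Vert \nabla u \Vert \lesssim \Vert (Q - z) u \Vert ,
\end{equation*}
with a constant that can be absorbed by taking $R$ large. The same bound holds for $Q^{*} = -h^{2}\Delta - iV + G^{2}$ with the same boundary condition. Together these show that $Q - z$ is injective with closed range and dense range, which gives invertibility and the resolvent bound. The gradient bound comes from $\re \< (Q-z)u , u \> = h^{2}\Vert \nabla u\Vert^{2} + \Vert G u \Vert^{2} - \re z \Vert u \Vert^{2}$. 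Indeed, $h^{2}\Vert\nabla u\Vert^{2} \leq \Vert (Q-z)u\Vert \Vert u\Vert + M h^{\sigma}\Vert u \Vert^{2}$, and since $h^{2} = h^{\sigma} h^{2\beta}$ (as $\sigma + 2\beta = 2$) this yields $h^{\beta}\Vert\nabla u\Vert \lesssim M^{1/2} \cdots$. So the core task is the $L^{2}$ bound, and we will lose a constant in $M$ that is fixed by enlarging $R$.

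We localize with a partition of unity adapted to three kinds of regions in $X$: the region where $\vert V - E \vert \geq \varepsilon'$, the region near regular points of the level set, and the region near the critical points $c \in \CC(E)$. The boundaries of the partition are taken at scale $h^{\beta}$ or larger, so the commutators $[h^{2}\Delta, \chi]$ cost $h^{2-\beta}\Vert\nabla u\Vert + h^{2-2\beta}\Vert u\Vert$. These are $o(h^{\sigma})$ relative terms, or are controlled by the gradient bound after choosing the partition scale $\geq S h^{\beta}$ with $S$ large.

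\emph{Region away from $V^{-1}(E)$.} Here $\vert \im \< (Q-z)u , u \> \vert \geq (\varepsilon' - \varepsilon) \Vert u \Vert^{2} - \cdots$, which is far better than needed.

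\emph{Region near regular points and near critical points but outside the $R h^{\beta}$ balls.} We follow Coti Zelati--Gallay and use a multiplier of the form $\im \< (Q-z) u , \phi\, u \>$ with $\phi = \phi((V - \im z)/\delta)$ a bounded monotone-type function. An example is $\phi(s) = s/\sqrt{1+s^{2}}$, or a sign-like cutoff. Its commutator with $h^{2}\Delta$ produces terms of size $h^{2}\delta^{-1}\vert\nabla V\vert \Vert u \Vert\Vert \nabla u \Vert$. The positivity comes from $\< (V - \im z)\phi u , u \> \gtrsim \delta \Vert u \Vert^{2}$ on the set where $\vert V - \im z\vert \gtrsim \delta$. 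The complementary set is a thin layer, and there we use a one-dimensional Poincaré/ellipticity estimate in the direction of $\nabla V$. Near regular points, $\vert\nabla V\vert \geq c_{0}$, and the choice $\delta = h^{2/3}$ works as in Remark~\ref{a94}. Near a critical point $c$, on a dyadic annulus $\vert x - c\vert \sim \rho$ with $R h^{\beta} \leq \rho \leq \rho_{0}$, we have $\vert\nabla V\vert \sim \rho^{\alpha_{c}-1}$ by \eqref{a81} and \eqref{a4}. On this annulus we rescale and use $\delta_{\rho} = h^{2/3}\rho^{2(\alpha_{c}-1)/3}$. For $\rho \geq R h^{\beta}$ and $\alpha_{c} \leq \alpha$ this gives a local lower bound $\gtrsim (h^{2}\rho^{2\alpha_{c}-2})^{1/3} \geq C R^{(2\alpha-2)/3} h^{\sigma}$ when $\alpha_{c} = \alpha$. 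When $\alpha_{c} < \alpha$ the bound is even better, namely $\gg h^{\sigma}$, uniformly in $\rho \geq h^{2/(\alpha_{c}+2)}$. Summing over the annuli with a dyadic partition of unity, the commutator errors are controlled by $h^{2}\rho^{-2} \ll$ the gain. At the critical points of lower order, $\alpha_{c} < \alpha$, the inner ball $\vert x - c\vert \leq S h^{2/(\alpha_{c}+2)}$ is not covered by $G^{2}$. There we instead use Corollary~\ref{a49}, or more directly the fact that the model operator at scale $h^{2/(\alpha_{c}+2)}$ has real part of spectrum $\gtrsim h^{2\alpha_{c}/(\alpha_{c}+2)} \gg M h^{\sigma}$. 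Concretely, we apply the rescaled estimate $\Vert (\CP_{c} - \zeta)^{-1}\Vert \leq C$ from Proposition~\ref{a47}\,$i)$ with $\zeta = h^{-\sigma_{c}}(z - iE)$, which lies in $\{\re \zeta \leq M h^{\sigma - \sigma_{c}} \to 0\}$. This requires $\vert\zeta\vert > K$, and otherwise we need $\re\zeta$ small to lie away from $\sigma(\CP_{c})$ by Proposition~\ref{a46}, which holds since $\re \Lambda_{c}$ has a positive minimum. The perturbation $V - V_{c}$ is $o(\rho^{\alpha_{c}})$ there and is absorbed.

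\emph{Inside the balls $\vert x - c\vert \leq 2Rh^{\beta}$ for $c \in \CC_{\max}(E)$.} Here $G^{2} = 1$ on the inner part, so $\re\<(Q-z)u,u\> \geq (1 - M h^{\sigma})\Vert u\Vert^{2}$. This dominates trivially.

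Gluing the regions gives $\Vert (Q-z)u\Vert \geq c(R) h^{\sigma}\Vert u\Vert$ with $c(R) \to \infty$ as $R \to \infty$. For this we take the transition between the $G$-region and the annuli at $\rho \sim R h^{\beta}$, and the constants $S,\varepsilon'$ fixed beforehand. Choosing $R \geq R_{M}$ then yields $M^{-1}h^{-\sigma}$. The main obstacle is the annular multiplier estimate near degenerate critical points. We must obtain a lower bound that is uniform over the dyadic scales using only the $C^{1}$ information \eqref{a4}, and we must handle the zero set of $v_{c}$ when $v_{c}$ changes sign. Through \eqref{a82} the level set $\{V = \im z\}$ is then a $C^{1}$ hypersurface transverse to $\nabla V$ in each annulus. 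My first attempt would be to carry this out after rescaling each annulus to unit size, where it becomes the uniform regular-point estimate with effective semiclassical parameter $h\rho^{-(\alpha_{c}+2)/2}$.
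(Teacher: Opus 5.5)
Your route is correct in outline but genuinely different from the paper's.

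\textbf{The paper's route.} The paper never localizes the operator. For $z = i\lambda$ it uses one global Lipschitz multiplier $\chi = \phi(\delta^{-1}\sgn(V-\lambda)\dist(x,M_\lambda))$, with a single width $\delta = sh^{\beta}$ and energy window $\rho = S\delta^{\alpha}$. It then splits only $\Vert u\Vert^{2}$ into three pieces:
\begin{itemize}
\item the balls of radius $Rh^{\beta}$, paid for by $\Vert Gu\Vert^{2}$;
\item the layer $\CM_\lambda$ outside them, paid for by Lemma~\ref{b18}, once the $C^{1}$ normal forms of Lemma~\ref{d37} show that this layer has width $\CO(\delta)$ in suitable coordinates at every distance from $c$;
\item the set $\vert V-\lambda\vert \geq S\delta^{\alpha}$, paid for by the multiplier.
\end{itemize}
Values $\re z>0$ are then reached by a Neumann series. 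There are no commutators. Lower-order critical points need no model operator, because $\rho^{1/\alpha_c}\ll\delta$ makes their neighbourhood an ordinary piece of layer.

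\textbf{Your route.} You glue local bounds over dyadic annuli rescaled to unit size. By \eqref{a4} the rescaled potentials converge in $C^{1}$ to $\CV_c$, and by \eqref{a81} $\nabla\CV_c\neq 0$ on the unit annulus. So each annulus is a non-critical Airy problem with parameter $h\rho^{-(\alpha_c+2)/2}$. This buys you two things: the zeros of $v_c$ cost nothing, so you need no angular diffeomorphism as in Case 2.2, and you get a sharper, scale-dependent gain $h^{2/3}\rho^{(2\alpha_c-2)/3}$.

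\textbf{The price.} Your scheme needs bookkeeping the paper avoids.
\begin{itemize}
\item Commutators must be compared with the local gains $\mu_j$ rather than with $h^{\sigma}$. The gradient part still closes with $h^{2}\Vert\nabla u\Vert^{2}\leq\Vert(Q-z)u\Vert\Vert u\Vert+Mh^{\sigma}\Vert u\Vert^{2}$, since $\sup_j\mu_j^{-2}h^{2}\vert\nabla\chi_j\vert^{2}\lesssim R^{-(4\alpha+2)/3}h^{-\sigma}$ for $h$ small.
\item The Airy bound must be proved uniformly for the rescaled $C^{1}$ family; this is essentially Case 1 of the paper with uniform charts.
\item With Neumann conditions, $\chi_j u\notin D(Q)$ in general. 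The localization must therefore be carried out inside the multiplier identities, where the boundary terms vanish, rather than by applying local resolvent bounds to $\chi_j u$.
\item At lower-order critical points your appeal to Proposition~\ref{a47} is genuinely needed within your scheme. A Poincar\'e bound in the ball of radius $r = Sh^{2/(\alpha_c+2)}$ only gains $h^{2}r^{-2}$, which is the same size as the cutoff commutator. The model estimate instead gains $h^{2\alpha_c/(\alpha_c+2)}$ with a constant independent of $S$. This is not circular, since Section~\ref{s8} does not use Proposition~\ref{a33}.
\end{itemize}
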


\subsection{Construction of a parametrix} \label{s9}

We can now define a parametrix for the resolvent of the operator $P$. For this purpose, we consider a smooth cut-off $\varphi \in C_{0}^{\infty} ( \R^{d} ; [ 0 , 1 ] )$ with $\supp \varphi \subset B ( 0 , 3 )$ and $\varphi = 1$ on $B ( 0 , 2 )$, and a smooth function $\psi \in C^{\infty} ( \overline{X} ; [ 0 , 1 ] )$ which satisfy the following properties. For any $c \in \CC_{\max} ( E )$, we define $\varphi_{c} \in C^{\infty}_{0} ( X )$ by $\varphi_{c} ( x ) = \varphi ( \frac{x - c}{R h^{\beta}} )$. Then, we have 
\begin{equation}
\sum_{c \in \CC_{\max} ( E )} \varphi_{c}^{2} + \psi^{2} = 1 ,
\end{equation}
and, for all $k \in \N^{d}$,
\begin{equation} \label{a23}
\vert \partial_{x}^{k} \psi ( x ) \vert \leq C_{k} R^{- \vert k \vert} h^{- \vert k \vert \beta} .
\end{equation}
Let us observe that, with the above definition, one has $\varphi_{c} = 1$ on $\supp g_{c} $ and hence $\psi g_{c} = 0$. Then, it follows from Corollary \ref{a49} and Proposition \ref{a33} that, for all $M \geq 1$, $R \geq R_{M}$, $\gamma > 0$ and any $z \in \Omega \setminus \bigcup_{c \in \CC_{\max} ( E )} \bigcup_{\mu \in \Lambda_{c}} B ( i E + h^{\sigma} \mu , \gamma h^{\sigma} )$, the parametrix operator
\begin{equation} \label{a24}
R ( z ) = \sum_{c \in \CC_{\max} ( E )} \varphi_{c} ( P_{c} - z )^{- 1} \varphi_{c} + \psi ( Q - z )^{- 1} \psi ,
\end{equation}
is well defined and bounded on $L^{2} ( X )$ with values in $D ( P )$. The following result shows that $R ( z )$ is indeed a good parametrix for $( P - z )^{- 1}$ with $z$ near $E$. In the sequel, $o_{h \to 0}^{R} ( 1 )$ denotes a quantity which goes to $0$ as $h \to 0$ for $R$ fixed.

\begin{proposition}\sl \label{a26}
Suppose that Assumption~\ref{h1} holds true and that $\varepsilon > 0$ is small enough. For any $M \geq 1$, we have
\begin{equation*}
( P - z ) R ( z ) = \Id + o_{h \to 0}^{R} ( 1 ) + o_{R \to \infty} ( 1 ) \quad \text{ and } \quad R ( z ) ( P - z ) = \Id + o_{h \to 0}^{R} ( 1 )  + o_{R \to \infty} ( 1 ) ,
\end{equation*}
uniformly for $z \in \Omega \setminus \bigcup_{c \in \CC_{\max} ( E )} \bigcup_{\mu \in \Lambda_{c}} B ( i E + h^{\sigma} \mu , \gamma h^{\sigma} )$.
\end{proposition}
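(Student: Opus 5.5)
We compute $(P-z)R(z)$ term by term, using the commutator identities $(P-z)\varphi_c = \varphi_c(P_c - z) + i\varphi_c(V-V_c) + [-h^2\Delta,\varphi_c]$ and $(P-z)\psi = \psi(Q-z) - \psi G^2 + [-h^2\Delta,\psi]$. Since $\psi G = 0$, the term $\psi G^2$ vanishes. We then get
\begin{equation*}
(P-z)R(z) = \sum_c \varphi_c^2 + \psi^2 + \sum_c \Big( [-h^2\Delta,\varphi_c] + i\varphi_c (V-V_c) \Big)(P_c-z)^{-1}\varphi_c + [-h^2\Delta,\psi](Q-z)^{-1}\psi ,
\end{equation*}
and the first two sums give $\Id$. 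It remains to show that the three kinds of error terms are $o^R_{h\to0}(1)+o_{R\to\infty}(1)$ in operator norm, uniformly for $z$ in the stated set. Here we use Corollary \ref{a49} and Proposition \ref{a33}, which give $\Vert (P_c-z)^{-1}\Vert + h^\beta \Vert \nabla (P_c-z)^{-1}\Vert \leq C h^{-\sigma}$, and the same bound for $Q$ with constant $M^{-1}$.

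\emph{Commutator terms.} We have $[-h^2\Delta,\varphi_c] = -2h^2\nabla\varphi_c\cdot\nabla - h^2\Delta\varphi_c$. By \eqref{a23} and the scaling of $\varphi_c$, the derivatives satisfy $|\partial^k\varphi_c|,|\partial^k\psi| \lesssim R^{-|k|}h^{-|k|\beta}$. Since $2-2\beta = \sigma$, we get
\begin{equation*}
\Vert h^2\nabla\varphi_c\cdot\nabla(P_c-z)^{-1}\Vert \lesssim h^{2}R^{-1}h^{-\beta}\,h^{-\beta}h^{-\sigma} = R^{-1},
\end{equation*}
and the same bound holds for the $Q$ term. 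The $h^2\Delta\varphi_c$ term gives $R^{-2}h^{2-2\beta-\sigma}=R^{-2}$. All of these are $o_{R\to\infty}(1)$. One point needs care: the constant in Corollary \ref{a49} must be independent of $R$, which it is since it involves only the model operator. The constant from Proposition \ref{a33} is $M^{-1}$ for $R\ge R_M$, also independent of $R$.

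\emph{Potential term.} On $\supp\varphi_c$ we have $|x-c|\le 3Rh^\beta$, and \eqref{a57} (integrating \eqref{a4}) gives $|V-V_c| = o(|x-c|^{\alpha}) = o_{h\to0}^R(1)\, R^\alpha h^{\alpha\beta}$. Since $\alpha\beta=\sigma$, the product with $\Vert(P_c-z)^{-1}\Vert\le Ch^{-\sigma}$ is $o^R_{h\to0}(1)$. This is the step where the smallness condition of Assumption~\ref{h1} enters, and it gives exactly the term that depends on $R$.

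\emph{The left parametrix.} For $R(z)(P-z)$ we argue symmetrically: we move the cutoffs to the right using $\varphi_c(P-z) = (P_c-z)\varphi_c + i(V-V_c)\varphi_c - [-h^2\Delta,\varphi_c]$. The commutator now sits to the right of the resolvent, as $(P_c-z)^{-1}\nabla\cdot(\ldots)$. To control $\Vert (P_c-z)^{-1}\nabla\Vert$ we take adjoints: $((P_c-z)^{-1}\nabla)^* = -\nabla (P_c^*-\bar z)^{-1}$, where $P_c^* = -h^2\Delta - iV_c$. This is the same type of operator with $v_c$ replaced by $-v_c$, and $\bar z$ is at distance $\gamma h^\sigma$ from the conjugate spectrum, so Corollary \ref{a49} applies to it (and similarly for $Q^*$). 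Alternatively, one can write $\nabla\cdot(\nabla\varphi_c\,u)$ and check that the domain considerations are compatible.

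We expect the main obstacle to be this uniformity bookkeeping. The commutator bounds must be uniform in $R$, which requires $R$-independent resolvent constants. The adjoint gradient bounds needed for the left parametrix must also be justified, including the boundary case, where $\varphi_c$ is supported inside $X$ for $h$ small.
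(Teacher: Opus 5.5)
Your proposal is correct and follows the paper's proof. It uses the same commutator decomposition with $P=P_c+i(V-V_c)$ near $c$, the identity $\psi G=0$, the bounds of Corollary \ref{a49} and Proposition \ref{a33} with $R$-independent constants, and the scaling relations $2-2\beta=\sigma=\alpha\beta$. For the left parametrix, which the paper dismisses with ``the same way'', your adjoint argument is a valid way to supply the missing detail: integrate by parts to move the gradient onto $u\nabla\varphi_c$, then bound $\Vert (P_c-z)^{-1}\nabla\Vert$ by applying the same estimates to $-V$ at energy $-E$ and the point $\bar z$.
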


\begin{proof}
From \eqref{a24}, we have
\begin{align*}
( P - z ) R ( z ) ={}& \sum_{c \in \CC_{\max} ( E )} ( P - z ) \varphi_{c} ( P_{c} - z )^{- 1} \varphi_{c} + ( P - z ) \psi ( Q - z)^{- 1} \psi  \\
={}& \sum_{c \in \CC_{\max} ( E )} \big( \varphi_{c} ( P - z ) ( P_{c} - z)^{- 1} \varphi_{c} - [ h^{2} \Delta , \varphi_{c} ] ( P_{c} - z )^{- 1} \varphi_{c} \big) \\
&+ \psi ( P - z ) ( Q - z )^{- 1} \psi - [ h^{2} \Delta , \psi ] ( Q - z )^{- 1} \psi .
\end{align*}
Note that $\psi ( P - z ) ( Q - z )^{- 1} \psi = \psi ( P + G - z ) ( Q - z )^{- 1} \psi = \psi^{2}$ since $\psi G = 0$. Moreover, \eqref{a57} gives the identity $P = P_{c} + r_{c} ( x )$ with $r_{c} ( x ) = o ( \vert x - c \vert^{\alpha} )$ for all $c \in \CC_{\max} ( E )$. Thus, the last equation becomes
\begin{align}
( P - z ) R ( z ) &= \sum_{c \in \CC_{\max} ( E )} \varphi_{c}^{2} + \psi^{2} + \sum_{c \in \CC_{\max} ( E )} ( \varphi_{c} r_{c} - [ h^{2} \Delta , \varphi_{c} ] ) ( P_{c} - z )^{- 1} \varphi_{c}   \nonumber \\
&\hspace{210pt} - [ h^{2} \Delta , \psi ] ( Q - z)^{- 1} \psi  \nonumber  \\
&= 1 + \sum_{c \in \CC_{\max} ( E )} ( \varphi_{c} r_{c} - [ h^{2} \Delta , \varphi_{c} ] ) ( P_{c} - z )^{- 1} \varphi_{c} - [ h^{2} \Delta , \psi ] ( Q - z )^{- 1} \psi .  \label{a27}
\end{align}
For $c \in \CC_{\max} ( E )$, we first study $\varphi_{c} r_{c} ( P_{c} - z )^{- 1} \varphi_{c}$. Thanks to Corollary \ref{a49}, there exists $C_{M , \gamma} > 0$ such that $\Vert ( P_{c} - z )^{- 1} \varphi_{c} \Vert \leq C_{M , \gamma} h^{- \sigma}$. On the other hand, by \eqref{a19}, the definition of $\varphi_{c}$ and $r_{c} = o_{x \to c} ( \vert x - c \vert^{\alpha} )$, we have $\vert r_{c} \vert = o_{R^{\alpha} h^{\sigma} \to 0} ( R^{\alpha} h^{\sigma} ) = o_{h \to 0}^{R} ( h^{\sigma} )$ on the support of $\varphi_{c}$. Consequently,
\begin{equation} \label{a28}
\varphi_{c} r_{c} ( P_{c} - z )^{- 1} \varphi_{c}= o_{h \to 0}^{R} ( 1 ) .
\end{equation}
Since $[ h^{2} \Delta , \varphi_{c} ] = h^{2} \Delta \varphi_{c} + 2 h^{2} \nabla \varphi_{c} \cdot  \nabla$, the next step is to estimate
\begin{equation*}
h^{2} ( \Delta \varphi_{c} ) ( P_{c} - z )^{- 1} \varphi_{c} \qquad \text{ and } \qquad h^{2} \nabla \varphi_{c} \cdot \nabla ( P_{c} - z )^{- 1} \varphi_{c} .
\end{equation*}
Using $\vert \Delta \varphi_{c} \vert \leq C R^{- 2} h^{- 2 \beta} $, we get as before $h^{2} ( \Delta \varphi_{c} ) ( P_{c} - z )^{- 1} \varphi_{c} = \CO_{M , \gamma} ( R^{- 2} h^{2 - 2 \beta - \sigma} )$. Since $2 - 2 \beta - \sigma = 0$ from \eqref{a19}, it becomes
\begin{equation} \label{a29}
h^{2} ( \Delta \varphi_{c} ) ( P_{c} - z )^{- 1} \varphi_{c} = \CO_{M , \gamma} ( R^{- 2} ) = o_{R \to \infty} ( 1 ) .
\end{equation}
Similarly, it follows from Corollary \ref{a49} that 
\begin{equation*}
\Vert \nabla ( P_{c} - z )^{- 1} \varphi_{c} \Vert \leq C_{M , \gamma} h^{- \beta - \sigma}.
\end{equation*}
Since $\vert \nabla \varphi_{c} \vert \leq C R^{- 1} h^{- \beta}$, it yields $h^{2} \nabla \varphi_{c} \cdot \nabla ( P_{c} - z )^{- 1} \varphi_{c}= \CO_{M , \gamma} ( R^{- 1} h^{2 - \beta - \beta - \sigma} )$ and, using again $2 - 2 \beta - \sigma = 0$, we deduce
\begin{equation} \label{a30}
h^{2} \nabla \varphi_{c} \cdot \nabla( P_{c} - z )^{- 1} \varphi_{c}= \CO_{M , \gamma} ( R^{- 1} ) = o_{R \to \infty} ( 1 ) .
\end{equation}
It remains to estimate $[ h^{2} \Delta , \psi ] ( Q - z )^{- 1} \psi$ where $[ h^{2} \Delta, \psi ] = h^{2} \Delta \psi + 2 h^{2} \nabla \psi \cdot \nabla$. Applying Proposition \ref{a33} instead of Corollary \ref{a49} and using \eqref{a23} with $k = 1 , 2$, we obtain as before
\begin{align}
[ h^{2} \Delta , \psi ] ( Q - z )^{- 1} \psi &= h^{2} ( \Delta \psi ) ( Q - z )^{- 1} \psi + 2 h^{2} \nabla \psi \cdot \nabla ( Q - z )^{- 1} \psi  \nonumber \\
&= \CO ( M^{- 1} R^{- 2} h^{2 - 2 \beta -\sigma} ) + \CO( M^{- 1} R^{- 1} h^{2 - \beta - \beta - \sigma} ) \nonumber \\
&= o_{R \to \infty} ( 1 ) .  \label{a31}
\end{align}
Putting together \eqref{a28}, \eqref{a29}, \eqref{a30}, \eqref{a31} and \eqref{a27}, we obtain 
\begin{equation*}
( P - z ) R ( z ) = 1 + o_{R \to \infty} ( 1 ) .
\end{equation*}
The same way, we can show $R ( z ) ( P - z ) = 1 + o_{R \to \infty} ( 1 )$ which completes the proof.
\end{proof}

\subsection{Asymptotic of the eigenvalues of $P$}

In this section, we prove Theorem \ref{a7} on the distribution of the leading spectrum of $P$ under Assumption~\ref{h1}. Recall that $\Omega = [ 0 , M h^{\sigma} ] + i [ E - \varepsilon , E + \varepsilon ]$ and let
\begin{equation} \label{a34}
\sigma_{0} ( P ) = \bigcup_{c \in \CC_{\max} ( E )} ( i E + h^{\sigma} \Lambda_{c} ) ,
\end{equation}
denote the quasi-eigenvalues of $P$, counted with their multiplicity. We first prove that $P$ has no eigenvalue away from $\sigma_{0} ( P )$. More precisely, for $M \geq 1$, $\gamma >0$, $h$ small enough and $z \in \Omega \setminus ( \sigma_{0} ( P ) + B ( 0 , \gamma h^{\sigma}) )$, Proposition \ref{a26} and \eqref{a24} show that $( P - z )$ is invertible and
\begin{equation} \label{a35}
\Vert ( P - z )^{- 1} \Vert \leq 2 \Vert R ( z ) \Vert = 2 \Big\Vert \sum_{c \in \CC_{\max} ( E )} \varphi_{c} ( P_{c} - z )^{- 1} \varphi_{c} + \psi ( Q - z )^{- 1} \psi \Big\Vert .
\end{equation}
Here, we have chosen $R$ large enough and then $h$ small enough such that the $o_{h \to 0}^{R} ( 1 ) + o_{R \to \infty} ( 1 )$'s in Proposition \ref{a26} have a norm less than $1 / 2$. Thus, $P$ has no eigenvalue in $\Omega \setminus ( \sigma_{0} ( P ) + B ( 0 , \gamma h^{\sigma}) )$. Moreover, the resolvent estimate \eqref{a8} is a direct consequence of Corollary \ref{a49}, Proposition \ref{a33} and \eqref{a35}.

We now prove that $P$ has eigenvalues near $\sigma_{0} ( P )$. For that, we consider $\lambda_{0} \in \Omega \cap \sigma_{0} ( P )$, which writes
\begin{equation*}
\lambda_{0} = i E + \mu_{0} h^{\sigma} ,
\end{equation*}
for some $\mu_{0} \in \Lambda_{c_{0}}$ and $c_{0} \in \CC_{\max} ( E )$. From \eqref{a34}, the distance between two different elements of $\sigma_{0} ( P )$ is at least of order $h^{\sigma}$. Then, the circle $\partial B ( \lambda_{0} , \gamma h^{\sigma} )$, with $\gamma > 0$ small enough, is at distance $h^{\sigma}$ from $\sigma_{0} ( P )$ and $\lambda_{0}$ is the only element of $\sigma_{0} ( P )$ in its interior. Applying Corollary \ref{a49} and Proposition \ref{a33}, we can write
\begin{align}
( P - z )^{- 1} = \sum_{c \in \CC_{\max} ( E )} \varphi_{c} ( P_{c} - z )^{- 1} \varphi_{c} \big( \Id &+ o_{h \to 0}^{R} ( 1 ) + o_{R \to \infty} ( 1 ) \big)   \nonumber \\
&+ \psi ( Q - z )^{- 1} \psi \big( \Id + o_{h \to 0}^{R} ( 1 ) + o_{R \to \infty} ( 1 ) \big) ,     \label{a36}
\end{align}
with $( P_{c} - z )^{-1} = \CO ( h^{- \sigma} )$ and $( Q - z )^{-1} = \CO ( h^{- \sigma} )$ uniformly for $z \in \partial B ( \lambda_{0} , \gamma h^{\sigma} )$.

We denote by
\begin{equation*}
\Pi = - \frac{1}{2 i \pi} \oint_{\partial B ( \lambda_{0} , \gamma h^{\sigma} )} ( P - z )^{- 1} d z \qquad \text{and} \qquad \Pi_{c} = - \frac{1}{2 i \pi} \oint_{\partial B ( \lambda_{0} , \gamma h^{\sigma} )} ( P_{c} - z )^{- 1} d z ,
\end{equation*}
the spectral projector of $P$ and $P_{c}$ associated to the eigenvalues in $B ( \lambda_{0} , \gamma h^{\sigma} )$. Remark that $\Pi$ (resp. $\Pi_{c}$) acts on $L^{2} ( X )$ (resp. $L^{2} ( \R^{d} )$). From \eqref{a32}, the operator $\Pi_{c}$ differs from $0$ iff $\lambda_{0}$ is an eigenvalue of $P_{c}$ iff $\mu_{0}$ is an eigenvalue of $\CP_{c} $. In that case, $\lambda_{0}$ is the unique eigenvalue of $P_{c}$ in $B ( \lambda_{0} , \gamma h^{\sigma} )$ and
\begin{equation} \label{a37}
\Pi_{c} = U_{c}^{*} \widetilde{\Pi}_{c} U_{c} ,
\end{equation}
where $\widetilde{\Pi}_{c}$ is the spectral projector of $\CP_{c}$ associated to the eigenvalue $\mu_{0}$. Integrating \eqref{a36} along $\partial B ( \lambda_{0} , \gamma h^{\sigma} )$ leads to
\begin{equation} \label{a38}
\Pi = \sum_{c \in \CC_{\max} ( E )} \varphi_{c} \Pi_{c} \varphi_{c} + o_{h \to 0}^{R} ( 1 ) + o_{R \to \infty} ( 1 ) .
\end{equation}
Combining \eqref{a37} and \eqref{a38}, there exists $C > 0$ such that
\begin{equation} \label{a39}
\Vert \Pi \Vert + \sum_{c \in \CC_{\max} ( E )} \Vert \Pi_{c} \Vert \leq C ,
\end{equation}
for $h$ small enough. To finish the proof of Theorem \ref{a7}, it remains to show that
\begin{equation} \label{a40}
\rank \Pi = \sum_{c \in \CC_{\max} ( E )} \rank \Pi_{c} .
\end{equation}

Let $( \widetilde{u}_{c}^{j} )_{1 \leq j \leq J_{c}} \in L^{2} ( \R )$ be an orthonormal basis of $\im \widetilde{\Pi}_{c}$ independent of $h$ and define $u_{c}^{j} = U_{c}^{*} \widetilde{u}_{c}^{j} \in L^{2} ( \R )$. Thus, $( u_{c}^{j} )_{j}$ is an orthonormal basis of $\im \Pi_{c}$. We also set $v^{j}_{c} = \varphi_{c} u_{c}^{j} \in L^{2} (\R ) \cap L^{2} ( X )$ and $w_{c}^{j} = \Pi v_{c}^{j} \in L^{2}( X )$. We have $U_{c} \varphi_{c} u_{c}^{j} = \varphi ( x / R ) \widetilde{u}_{c}^{j}$. Since $\widetilde{u}_{c}^{j}$ is independent of $h$, the Lebesgue theorem implies $U_{c} \varphi_{c} u_{c}^{j} = \widetilde{u}_{c}^{j}  + o_{R \to \infty} ( 1 )$ and then
\begin{equation} \label{a41}
v_{c}^{j}= u_{c}^{j} + o_{R \to \infty} ( 1 ) ,
\end{equation}
in $L^{2} ( \R )$. Using $\varphi_{c} \varphi_{c^{\prime}} = 0$ for $c \neq c^{\prime}$, \eqref{a38}, \eqref{a39} and \eqref{a41} give
\begin{align}
w_{c}^{j} &= \varphi_{c} \Pi_{c} \varphi_{c} v_{c}^{j} + o_{h \to 0}^{R} ( 1 ) + o_{R \to \infty} ( 1 ) = \varphi_{c} \Pi_{c} u_{c}^{j} + o_{h \to 0}^{R} ( 1 ) + o_{R \to \infty} ( 1 ) \nonumber \\
&= v_{c}^{j} + o_{h \to 0}^{R} ( 1 ) + o_{R \to \infty} ( 1 ) ,    \label{a42}
\end{align}
in $L^{2} ( X )$. Recalling that $( u_{c}^{j} )_{j}$ are orthonormal and mainly supported near $c$, \eqref{a41}, \eqref{a42} imply that, for $R$ large enough and then $h$ small enough, the family $( w_{c}^{j} )_{c , j}$ is free and, since $w_{c}^{j} \in \im \Pi$,
\begin{equation} \label{a43}
\rank \Pi \geq \sum_{c \in \CC_{\max} ( E )} \rank \Pi_{c} .
\end{equation}

We now show by contradiction that the rank of $\Pi$ cannot be bigger than $\sum \rank \Pi_{c}$. If this is the case, there exists $w \in L^{2} ( X )$ with $\Vert w \Vert = 1$ orthogonal to the $w_{c}^{j}$'s. Using \eqref{a38}, \eqref{a39}, \eqref{a41}, \eqref{a42} and $\varphi_{c} \varphi_{c^{\prime}} = 0$ for $c \neq c^{\prime}$, we get
\begin{align}
0 &= \< w , w_{c}^{j} \>_{L^{2} ( X )} = \Big\< \sum_{c^{\prime} \in \CC_{\max} ( E )} \varphi_{c^{\prime}} \Pi_{c^{\prime}} \varphi_{c^{\prime}} w + o_{h \to 0}^{R} ( 1 ) + o_{R \to \infty} ( 1 ) , w_{c}^{j} \Big\>_{L^{2} ( X )}   \nonumber  \\
&= \< \varphi_{c} \Pi_{c} \varphi_{c} w , w_{c}^{j} \>_{L^{2} ( X )} + o_{h \to 0}^{R} ( 1 ) + o_{R \to \infty} ( 1 )   \nonumber \\
&= \< \Pi_{c} \varphi_{c} w , \varphi_{c} w_{c}^{j} \>_{L^{2} ( \R )} + o_{h \to 0}^{R} ( 1 ) + o_{R \to \infty} ( 1 ) \nonumber \\
&= \< \Pi_{c} \varphi_{c} w , u_{c}^{j} \>_{L^{2} ( \R )} + o_{h \to 0}^{R} ( 1 ) + o_{R \to \infty} ( 1 ) .
\end{align}
Since $( u_{c}^{j} )_{j}$ is an orthonormal basis of $\im \Pi_{c}$, it yields $\Pi_{c} \varphi_{c} w = o_{h \to 0}^{R} ( 1 ) + o_{R \to \infty} ( 1 )$ for all $c \in \CC_{\max} ( E )$. Using \eqref{a38}, we eventually deduce $w = o_{h \to 0}^{R} ( 1 ) + o_{R \to \infty} ( 1 )$ which provides a contradiction for $R$ large enough and then $h$ small enough with $\Vert w \Vert = 1$. Summing up,
\begin{equation}
\rank \Pi \leq \sum_{c \in \CC_{\max} ( E )} \rank \Pi_{c} ,
\end{equation}
and the theorem follows.

\subsection{Proof of Proposition \ref{a33}} \label{s1}

This proof follows the strategy of \cite{CoGa23_01}, Section 2. The main idea to estimate the resolvent of $Q - i \lambda$ is to use the ellipticity of $V - \lambda$ outside the level set $V^{- 1} ( \lambda )$ and to apply a Poincar\'e inequality near $V^{- 1} ( \lambda )$. Then, we first need to study the level sets of $V$. If $X$ is an open set of $\R^{d}$, we would like to treat the points at the boundary $\partial X$ as the points in the interior $X$. Thus, following \cite{CoGa23_01}, we extend the potential $V$. More precisely, since $V \in C^{1} ( \overline{X} )$ and $X$ is a Lipschitz domain, the Whitney extension theorem (see \cite{Wh34_01}) shows that $V$ can be extended as a $C^{1}$ function in a small neighborhood $Y$ of $\overline{X}$ and that the extension has no critical point in $Y \setminus X$. If $X = \T^{d}$, we take $Y = \T^{d}$.

\subsubsection{Description of the levels sets} \label{s3}

The first step is to understand the structure and the size of the levels set of $V$ near $E$. For $\lambda \in \R$, we recall that $V^{- 1} ( \lambda ) = \{ x \in Y ; \ V ( x ) = \lambda \}$. Following \cite{CoGa23_01}, we define for $\rho , \delta > 0$
\begin{equation} \label{a63}
M_{\lambda} = \{ x \in Y ; \ \vert V ( x ) - \lambda \vert \leq \rho \} ,
\end{equation}
the union of the level sets close to $\lambda$, and
\begin{equation} \label{a59}
\CM_{\lambda} = \{ x \in Y ; \ \dist ( x , M_{\lambda} ) \leq \delta \} .
\end{equation}
In the applications, $\lambda$ will be close to $E$ and $\rho , \delta$ will be powers of $h$. Since $V$ is continuous on the compact set $\overline{X}$, a compactness argument shows that, for all $\CU$ neighborhood of $V^{- 1} ( E ) \cap \overline{X}$ in $Y$, there exists $\varepsilon > 0$ such that $V^{- 1} ( [ E - 2 \varepsilon , E + 2 \varepsilon ] ) \cap \overline{X} \subset \CU$. Thus, taking $\varepsilon > 0$ small enough and $\lambda \in [ E - \varepsilon , E + \varepsilon ]$, it is enough to work near $V^{- 1} ( E ) \cap \overline{X}$ to study the set $M_{\lambda}$ near $\overline{X}$.

\underline{Case 1} Let $c \in V^{- 1} ( E ) \cap \overline{X}$ be such that $\nabla_{x} V ( c ) \neq 0$. Since $V \in C^{1}$, there exists a $C^{1}$ change of variables $x = \varphi ( y )$ with $c = \varphi ( 0 )$ and
\begin{equation} \label{b8}
V ( \varphi ( y ) ) = E + y_{1} ,
\end{equation}
for $y$ near $0$. Moreover, there exists $L > 0$ such that
\begin{equation} \label{b9}
L^{- 1} \vert y - \widetilde{y} \vert \leq \vert \varphi ( y ) - \varphi ( \widetilde{y} ) \vert \leq L \vert y - \widetilde{y} \vert ,
\end{equation}
for $y , \widetilde{y}$ near $0$. In particular, the level set of energy $\lambda$ near $E$ in the variables $y$ satisfies
\begin{equation} \label{b10}
V^{- 1} ( \lambda ) \subset \{ ( \lambda - E , y_{2} , \ldots , y_{d} ) ; \ ( y_{2} , \ldots , y_{d} ) \in \R^{d -1} \} .
\end{equation}
For any $\widetilde{x} = \varphi ( \widetilde{y} ) \in M_{\lambda}$, \eqref{b8} and \eqref{b10} imply $\vert \widetilde{y}_{1} - ( \lambda - E ) \vert \leq \rho$ and then $\widetilde{y} \in V^{- 1} ( \lambda ) + B ( 0 , \rho )$. Eventually, consider a point $x = \varphi ( y )$ at distance less than $\delta$ from the part of $M_{\lambda}$ described previously. Thus, there exists $\widetilde{x} = \varphi ( \widetilde{y} ) \in M_{\lambda}$ with $\vert x - \widetilde{x} \vert\leq \delta$. From \eqref{b9}, we have $\vert y - \widetilde{y} \vert \leq L \delta$ and then
\begin{equation} \label{b11}
\vert y_{1} - ( \lambda - E ) \vert \leq \rho + L \delta ,
\end{equation}
for all $x = \varphi ( y ) \in \CM_{\lambda}$. Note that these arguments are similar to those of Section 2.2 of \cite{CoGa23_01}.

\begin{figure}
\begin{center}
\begin{picture}(0,0)%
\includegraphics{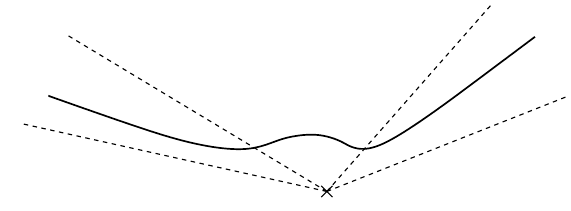}%
\end{picture}%
\setlength{\unitlength}{1184sp}%
\begingroup\makeatletter\ifx\SetFigFont\undefined%
\gdef\SetFigFont#1#2#3#4#5{%
  \reset@font\fontsize{#1}{#2pt}%
  \fontfamily{#3}\fontseries{#4}\fontshape{#5}%
  \selectfont}%
\fi\endgroup%
\begin{picture}(15198,5563)(-8714,334)
\put(  1,389){\makebox(0,0)[b]{\smash{{\SetFigFont{9}{10.8}{\rmdefault}{\mddefault}{\updefault}$c$}}}}
\put(5176,3989){\makebox(0,0)[lb]{\smash{{\SetFigFont{9}{10.8}{\rmdefault}{\mddefault}{\updefault}$t = t ( r , s )$}}}}
\put(-1124,2864){\makebox(0,0)[lb]{\smash{{\SetFigFont{9}{10.8}{\rmdefault}{\mddefault}{\updefault}$r = r ( \theta )$}}}}
\put(5776,5489){\makebox(0,0)[lb]{\smash{{\SetFigFont{9}{10.8}{\rmdefault}{\mddefault}{\updefault}\underline{Case 2.2}}}}}
\put(-974,4739){\makebox(0,0)[b]{\smash{{\SetFigFont{9}{10.8}{\rmdefault}{\mddefault}{\updefault}\underline{Case 2.1}}}}}
\put(-8699,3914){\makebox(0,0)[b]{\smash{{\SetFigFont{9}{10.8}{\rmdefault}{\mddefault}{\updefault}\underline{Case 2.2}}}}}
\put(-6824,3389){\makebox(0,0)[lb]{\smash{{\SetFigFont{9}{10.8}{\rmdefault}{\mddefault}{\updefault}$V^{- 1} ( \lambda )$}}}}
\end{picture}%
\end{center}
\caption{The energy surface $V^{- 1} ( \lambda )$ near a critical point $c$.} \label{f1}
\end{figure}

\underline{Case 2} Consider now a critical point $c \in V^{- 1} ( E ) \cap X$. To simplify the notations, we assume, without lost of generality, that $E = 0$. We work in the spherical coordinates $x = c + r \theta$ with $r \in \R^{+}$ small and $\theta \in \S^{d - 1}$, and we denote $V ( r , \theta ) = V ( c + r \theta )$. We will describe the sets $M_{\lambda}$ and $\CM_{\lambda}$ in the angular directions close to $\theta_{0} \in \S^{d - 1}$ by distinguishing the cases $v_{c} ( \theta_{0} ) \neq 0$ and $v_{c} ( \theta_{0} ) = 0$, see Figure \ref{f1}.

\underline{Case 2.1} Let $\theta_{0} \in \S^{d - 1}$ be such that $v_{c} ( \theta_{0} ) \neq 0$. In the sequel we assume that $v_{c} ( \theta_{0} ) > 0$, the other case can be treated in the same way. We work in the variables $r , \theta$ and assume that $\theta$ is close to $\theta_{0}$ and $r \geq 0$ is small.

We first write $V$ in normal form. From \eqref{a4}, we have $V ( x ) = v_{c} ( \theta ) r^{\alpha_{c}} + \xi ( x )$ with $\xi ( c ) = 0$ and $\nabla \xi ( x ) = o ( r^{\alpha_{c} - 1} )$. Let $\widehat{v_{c}}$ denote a positive $C^{1} ( \S^{d - 1} )$ function such that $\widehat{v_{c}} ( \theta ) = v_{c} ( \theta )$ for $\theta$ near $\theta_{0}$. We then define the following change of variables from a pointed neighborhood of $c$ to a pointed neighborhood of $0$
\begin{equation} \label{d1}
x = c + r \theta \longmapsto \widehat{x} = \widehat{r} \widehat{\theta} ,
\end{equation}
with
\begin{equation} \label{d22}
\widehat{r} ( r , \theta ) = ( \widehat{v_{c}} ( \theta ) + \xi ( x ) r^{- \alpha_{c}} )^{1 / \alpha_{c}} r \qquad \text{and} \qquad \widehat{\theta} = \theta .
\end{equation}
Note that $\widehat{v_{c}}$ is bounded from below by a positive constant since $\widehat{v_{c}}$ is continuous and positive on the compact set $\S^{d - 1}$. On the other hand, \eqref{a57} shows that $\xi ( x ) r^{- \alpha_{c}} = o_{r \to  0} ( 1 )$. Thus, \eqref{d1} is well defined. Moreover, \eqref{a4} and direct computations give
\begin{align}
\partial_{r} \widehat{r} ( r , \theta ) &= \widehat{v_{c}} ( \theta )^{1 / \alpha_{c}} + o _{r \to 0} ( 1 ) ,  \label{d2}  \\
\partial_{\theta} \widehat{r} ( r , \theta ) &= \partial_{\theta} \widehat{v_{c}} ( \theta ) \frac{\widehat{v_{c}} ( \theta )^{\frac{1 - \alpha_{c}}{\alpha_{c}}}}{\alpha_{c}} r + o _{r \to 0} ( r ) .   \label{d3}
\end{align}
Since $\widehat{v_{c}} ( \theta )^{1 / \alpha_{c}} \neq 0$, this implies that \eqref{d1} is a local diffeomorphism near any point of a pointed neighborhood of $c$. Since this change of variables preserves $\theta$, \eqref{d22}, $\widehat{r} ( 0 , \theta ) = 0$ and $\partial_{r} \widehat{r} ( r , \theta ) > 0$ show that \eqref{d1} is a bijection from a pointed neighborhood of $c$ onto a pointed neighborhood of $0$. Summing up, \eqref{d1} is a diffeomorphism from a pointed neighborhood of $c$ onto a pointed neighborhood of $0$. Note that this is not a diffeomorphism at $x = 0$ when $\widehat{v_{c}}$ is not constant. Working in spherical coordinates and using \eqref{d22}, \eqref{d2}, \eqref{d3} and $\vert \nabla_{x} u \vert / C \leq \vert \partial_{r} u \vert + r^{- 1} \vert \partial_{\theta} u \vert \leq C \vert \nabla_{x} u \vert$ for $u \in C^{1} ( \R^{d} )$, one can check that there exists $C > 0$ such that
\begin{equation} \label{d6}
\Vert \nabla_{x} \widehat{x} ( x ) \Vert + \Vert \nabla_{\widehat{x} } x ( \widehat{x}  ) \Vert  \leq C ,
\end{equation}
for $x$ in a pointed neighborhood of $c$ and  $\widehat{x} $ in a pointed neighborhood of $0$, and
\begin{equation} \label{d10}
\vert \nabla_{\widehat{x}} \widehat{u} \vert / C \leq \vert \nabla_{x} u \vert \leq C \vert \nabla_{\widehat{x}} \widehat{u} \vert ,
\end{equation}
where $\widehat{u}$ denote the function $u ( x )$ in the new coordinates $\widehat{x}$. In particular, the Jacobian of the change of variables \eqref{d1} and of its inverse are uniformly bounded. Finally, we have by construction
\begin{equation} \label{d4}
\widehat{V} ( \widehat{x} ) = \widehat{r}^{\alpha_{c}} ,
\end{equation}
for $\theta$ near $\theta_{0}$.

We describe the sets $M_{\lambda} , \CM_{\lambda}$ with $\lambda , \delta > 0$ small enough and $\rho < \lambda / 2$ in the new variables $\widehat{x} = \widehat{r} \widehat{\theta}$. From \eqref{d4}, a point $x = c + r \theta$ with $\theta$ near $\theta_{0}$ belongs to $M_{\lambda}$  iff
\begin{equation*}
( \lambda - \rho )^{1 / \alpha_{c}} \leq \widehat{r} \leq ( \lambda + \rho )^{1 / \alpha_{c}} .
\end{equation*}
Since $\rho < \lambda / 2$, there exists $C > 0$ such that
\begin{equation} \label{d7}
\vert \widehat{r}  - \lambda^{1 / \alpha_{c}} \vert \leq C \rho \lambda^{\frac{1 - \alpha_{c}}{\alpha_{c}}} ,
\end{equation}
for all $x = c + r \theta \in M_{\lambda}$ with $\theta$ near $\theta_{0}$. We now study $\CM_{\lambda}$. Consider a point $x$ such that $\dist ( x , M_{\lambda} ) \leq \delta$, where $M_{\lambda}$ is (the part of) the levels sets described previously. By definition, there exists $y \in M_{\lambda}$ satisfying \eqref{d7} such that
\begin{equation} \label{d5}
\vert x - y \vert \leq \delta .
\end{equation}
By the mean value inequality and \eqref{d6}, we have $\vert \widehat{x} - \widehat{y} \vert \leq C \delta$ after the change of variables \eqref{d1}.  The triangle inequality yields $\vert \widehat{r} - \widehat{r}_{y} \vert \leq \vert \widehat{x} - \widehat{y} \vert \leq C \delta$ with the notations $\widehat{x} = \widehat{r} \widehat{\theta}$ and $\widehat{y} = \widehat{r}_{y} \widehat{\theta}_{y}$. Combining with \eqref{d7} for $y$, it implies
\begin{equation} \label{d8}
\vert \widehat{r}  - \lambda^{1 / \alpha_{c}} \vert \leq \vert \widehat{r} - \widehat{r}_{y}  \vert + \vert \widehat{r}_{y}  - \lambda^{1 / \alpha_{c}} \vert \leq C \delta + C \rho \lambda^{\frac{1 - \alpha_{c}}{\alpha_{c}}} .
\end{equation}

\underline{Case 2.2} Assume now that $v_{c} ( \theta_{0} ) = 0$. Note that this implies that $d \geq 2$. As in the previous case, we write $V$ in a normal form valid for $\theta$ near $\theta_{0}$ and $r \geq 0$ small.

The first step is to construct a (family of) global diffeomorphism of $\S^{d - 1}$. To simplify the exposition, we assume that $\theta_{0} = ( 1 , 0 , \ldots )$ and that $d_{\theta_{0}} v_{c} = \gamma ( 0 , 1 , 0 , \ldots )$ with $\gamma \neq 0$. Here, we identify the differential $d_{\theta_{0}} v_{c}$ with the unique vector $d_{\theta_{0}} v_{c} \in \theta_{0}^{\perp}$ such that $d_{\theta_{0}} v_{c} ( U ) = \< d_{\theta_{0}} v_{c} , U \>$ for all $U \in T_{\theta_{0}} \S^{d - 1} = \theta_{0}^{\perp}$. In the following, we assume that $\gamma > 0$. We use the notation $\theta^{\prime} = ( \theta_{3} , \ldots , \theta_{d} )$. The map
\begin{equation} \label{d12}
( \theta_{2} , \theta^{\prime} ) \longmapsto \big( \sqrt{1 - \theta_{2}^{2} - {\theta^{\prime}}^{2}} , \theta_{2} , \theta^{\prime} \big) ,
\end{equation}
is a diffeomorphism from a neighborhood of $0$ in $\R^{d - 1}$ to a neighborhood of $\theta_{0}$ in $\S^{d - 1}$. In the sequel, we identify $\theta \in \S^{d - 1}$ near $\theta_{0}$ with its coordinates $( \theta_{2} , \theta^{\prime} )  \in \R^{d - 1}$ using the application \eqref{d12}. Let $g ( \theta_{2} , \theta^{\prime} )$ be a $C^{\infty}$ function on $\R^{d - 1}$ such that $g ( \theta_{2} , \theta^{\prime} ) = \theta_{2}$ outside a small neighborhood of $0$, $g ( \theta_{2} , \theta^{\prime} ) = \gamma \theta_{2}$ in a smaller neighborhood of $0$ and $\partial_{\theta_{2}} g ( \theta_{2} , \theta^{\prime} ) > 0$ for all $( \theta_{2} , \theta^{\prime} ) \in \R^{d - 1}$. Then,
\begin{equation*}
\varphi_{0} : ( \theta_{2} , \theta^{\prime} ) \longmapsto ( g ( \theta_{2} , \theta^{\prime} ) , \theta^{\prime} ) ,
\end{equation*}
is a global diffeomorphism of $\R^{d - 1}$ which coincides with the identity outside a small neighborhood of $0$. For $\chi \in C^{\infty}_{0} ( \R^{d - 1} )$ equal to $1$ near $0$ and $\varepsilon > 0$, we define the function
\begin{equation} \label{d23}
\varphi : ( \theta_{2} , \theta^{\prime} ) \longmapsto \Big( \chi \Big( \frac{\theta_{2} , \theta^{\prime}}{\varepsilon} \Big) \big( v_{c} ( \theta ) + \xi ( r , \theta ) r^{- \alpha_{c}} \big) + \Big( 1 - \chi \Big( \frac{\theta_{2} , \theta^{\prime}}{\varepsilon} \Big) \Big) g ( \theta_{2} , \theta^{\prime} ) , \theta^{\prime} \Big) ,
\end{equation}
from $\R^{d - 1}$ to $\R^{d - 1}$. It can be written
\begin{equation} \label{d16}
\varphi = \varphi_{0}  + ( \phi , 0 ) \quad \text{with} \quad \phi ( \theta_{2} , \theta^{\prime} , r ) = \chi \Big( \frac{\theta_{2} , \theta^{\prime}}{\varepsilon} \Big) \big( v_{c} ( \theta ) + \xi ( r , \theta ) r^{- \alpha_{c}} - g ( \theta_{2} , \theta^{\prime} ) \big) .
\end{equation}
Since $v_{c} \in C^{1}$, $v_{c} ( \theta_{0} ) = 0$ and $\partial_{\theta_{2} ,  \theta^{\prime}} v_{c} ( \theta_{0} ) = \gamma ( 1 , 0 )$, we deduce $v_{c} ( \theta ) = \gamma \theta_{2} + o_{\theta \to \theta_{0}} ( \theta_{2} ,  \theta^{\prime} )$ and $\partial_{\theta_{2} ,  \theta^{\prime}} v_{c} ( \theta ) = \gamma ( 1 , 0 ) + o_{\theta \to \theta_{0}} ( 1 )$ (recall that $( \theta_{2} , \theta^{\prime} ) \in \R^{d - 1}$ is identified with $\theta \in \S^{d - 1}$ through \eqref{d12}). Together with $g ( \theta_{2} , \theta^{\prime} ) = \gamma \theta_{2}$ on the support of $\chi \big( ( \theta_{2} , \theta^{\prime} ) / \varepsilon \big)$ and
\begin{equation} \label{d14}
\xi ( r , \theta ) r^{- \alpha_{c}} = o_{r \to 0} ( 1 ) , \quad \partial_{\theta_{2} ,  \theta^{\prime}} \xi ( r , \theta ) r^{- \alpha_{c}} = o_{r \to 0} ( 1 ) , \quad \partial_{r} \xi ( r , \theta ) r^{- \alpha_{c}} = o_{r \to 0} ( r^{- 1} ) ,
\end{equation}
which follow from \eqref{a4}, it implies
\begin{align}
\partial_{\theta_{2} ,  \theta^{\prime}} \phi ( \theta_{2} , \theta^{\prime} , r ) ={}& \varepsilon^{- 1} ( \partial_{\theta_{2} ,  \theta^{\prime}} \chi ) \Big( \frac{\theta_{2} , \theta^{\prime}}{\varepsilon} \Big) \big( v_{c} ( \theta ) + \xi ( r , \theta ) r^{- \alpha_{c}} - g ( \theta_{2} , \theta^{\prime} ) \big)  \nonumber   \\
&+ \chi \Big( \frac{\theta_{2} , \theta^{\prime}}{\varepsilon} \Big) \partial_{\theta_{2} ,  \theta^{\prime}} \big( v_{c} ( \theta ) + \xi ( r , \theta ) r^{- \alpha_{c}} - g ( \theta_{2} , \theta^{\prime} ) \big) \nonumber  \\
={}&  \varepsilon^{- 1} ( \partial_{\theta_{2} ,  \theta^{\prime}} \chi ) \Big( \frac{\theta_{2} , \theta^{\prime}}{\varepsilon} \Big) \big( o_{\theta \to \theta_{0}} ( \theta_{2} ,  \theta^{\prime} ) + o_{r \to 0} ( 1 ) \big) \nonumber  \\
&+ \chi \Big( \frac{\theta_{2} , \theta^{\prime}}{\varepsilon} \Big) \big( o_{\theta \to \theta_{0}} ( 1 ) + o_{r \to 0} ( 1 ) \big) \nonumber   \\
={}& o_{\varepsilon \to 0} ( 1 ) + \varepsilon^{- 1} o_{r \to 0} ( 1 ) .   \label{d15}
\end{align}
In particular, taking $\varepsilon > 0$ small enough and then assuming that $r$ is small enough, we can guaranty that $\sup_{\R^{d - 1}} \vert \partial_{\theta_{2} ,  \theta^{\prime}} \phi \vert \leq ( \sup_{\R^{d - 1}} \Vert \partial_{\theta_{2} ,  \theta^{\prime}} \varphi_{0}^{- 1} \Vert )^{- 1} / 2$. Using $\varphi = \varphi_{0}  + ( \phi , 0 )$, it yields that $\varphi$ is a local diffeomorphism near any point of $\R^{d - 1}$. Since $\varphi_{0}$ is global diffeomorphism on $\R^{d - 1}$ which coincides with the identity outside a compact set, there exists $\nu > 0$ such that $\vert \varphi_{0} ( x ) - \varphi_{0} ( y ) \vert \geq \nu \vert x - y \vert$ for all $x , y \in \R^{d - 1}$. For $\varepsilon > 0$ and then $r$ small enough, we can assume that $\sup_{\R^{d - 1}} \vert \partial_{\theta_{2} ,  \theta^{\prime}} \phi \vert \leq \nu / 2$. The mean value inequality gives $\vert \phi ( x ) - \phi ( y ) \vert \leq \nu \vert x - y \vert / 2$ and then
\begin{equation} \label{d38}
\vert \varphi ( x ) - \varphi ( y ) \vert \geq \vert \varphi_{0} ( x ) - \varphi_{0} ( y ) \vert - \vert \phi ( x ) - \phi ( y ) \vert \geq \frac{\nu}{2} \vert x - y \vert .
\end{equation}
Thus, $\varphi$ is injective on $\R^{d - 1}$. Finally, the set $\varphi ( \R^{d - 1} )$ is closed thanks to \eqref{d38} and open since $\varphi$ is a local diffeomorphism. By connectedness, it shows that $\varphi( \R^{d - 1} ) = \R^{d - 1}$, that is $\varphi$ is surjective. Summing up, $\varphi$ is a global diffeomorphism on $\R^{d - 1}$. By conjugation with the local diffeomorphism \eqref{d12}, $\varphi$ can be seen as a local diffeomorphism on $\S^{d - 1}$ near $\theta_{0}$. Extended by the identity to the whole $\S^{d - 1}$, it provides a global diffeomorphism on $\S^{d - 1}$ for $r$ small enough denoted $\psi ( \theta , r ) : \S^{d - 1} \times [ 0 , r_{0} ] \longrightarrow \S^{d - 1}$. Moreover, there exists $C > 0$ such that
\begin{equation} \label{d33}
\vert \partial_{\theta} \psi ( \theta , r ) \vert + \vert \partial_{\theta} \psi^{- 1} ( \theta , r ) \vert \leq C ,
\end{equation}
uniformly for $r$ small enough. Note that when $\gamma < 0$, we can extend $\varphi$ with the symmetry $\theta \longmapsto ( \theta_{1} , - \theta_{2} , \theta_{3} , \ldots \theta_{d} )$ on $\S^{d - 1}$ instead of $\Id_{\S^{d - 1}}$. We now mimic the previous construction by replacing the function $v_{c} ( \theta ) + \xi ( r , \theta ) r^{- \alpha_{c}}$ by $\theta_{2} ( 1 - \theta_{2}^{2} - {\theta^{\prime}}^{2} )^{\frac{\alpha_{c} - 1}{2}}$ which satisfies similar properties. This provides a new diffeomorphism on $\S^{d - 1}$ denoted $\widehat{\psi}$ such that $\widehat{\psi}_{2} ( \theta ) = \theta_{2} \theta_{1}^{\alpha_{c} - 1}$. We eventually define
\begin{equation} \label{d26}
\widetilde{\psi} ( \theta , r ) = \widehat{\psi}^{- 1} ( \psi ( \theta , r ) ) : \S^{d - 1} \times [ 0 , r_{0} ] \longrightarrow \S^{d - 1} ,
\end{equation}
which is a global diffeomorphism on $\S^{d - 1}$ for $r$ small enough. Lastly, \eqref{d33} gives
\begin{equation} \label{d17}
\vert \partial_{\theta} \widetilde{\psi} ( \theta , r ) \vert + \vert \partial_{\theta} \widetilde{\psi}^{- 1} ( \theta , r ) \vert \leq C ,
\end{equation}
uniformly for $r$ small enough.

With the construction of the last paragraph, we define the following change of variables on $\R^{d}$ from a pointed neighborhood of $c$ to a pointed neighborhood of $0$
\begin{equation} \label{d13}
x = c + r \theta \longmapsto \widetilde{x} = \widetilde{r} \widetilde{\theta} ,
\end{equation}
with $\widetilde{r} = r$ and $\widetilde{\theta} = \widetilde{\psi}( \theta , r )$. Since $\widetilde{\psi}$ is a diffeomorphism on $\S^{d - 1}$, \eqref{d13} is a local diffeomorphism near any point of a pointed neighborhood of $c$. Since it preserves the radius and $\widetilde{\psi}$ is a bijection on $\S^{d - 1}$, it is a bijection from a pointed neighborhood of $c$ onto a pointed neighborhood of $0$. Summing up, \eqref{d13} is a diffeomorphism from a pointed neighborhood of $c$ onto a pointed neighborhood of $0$. On the other hand, \eqref{d16}, \eqref{d14} and \eqref{d26} give
\begin{equation} \label{d18}
\partial_{r} \widetilde{\theta} ( r , \theta ) = d \widehat{\psi}^{- 1} ( \partial_{r} \varphi ( r , \theta ) ) = o_{r \to 0} ( r^{- 1} ) .
\end{equation}
Working in spherical coordinates and using \eqref{d17}, \eqref{d18} and $\vert \nabla_{x} u \vert / C \leq \vert \partial_{r} u \vert + r^{- 1} \vert \partial_{\theta} u \vert \leq C \vert \nabla_{x} u \vert$ for $u \in C^{1} ( \R^{d} )$, one can check that there exists $C > 0$ such that
\begin{equation} \label{d20}
\Vert \nabla_{x} \widetilde{x} ( x ) \Vert + \Vert \nabla_{\widetilde{x}} x ( \widetilde{x} ) \Vert \leq C ,
\end{equation}
for $x$ in a pointed neighborhood of $c$ and  $\widetilde{x} $ in a pointed neighborhood of $0$, and
\begin{equation} \label{d19}
\vert \nabla_{\widetilde{x}} \widetilde{u} \vert / C \leq \vert \nabla_{x} u \vert \leq C \vert \nabla_{\widetilde{x}} \widetilde{u} \vert ,
\end{equation}
where $\widetilde{u}$ denote the function $u ( x )$ in the new coordinates $\widetilde{x}$. In particular, the Jacobian of the change of variables \eqref{d13} and of its inverse are uniformly bounded. Finally, since $V ( x ) = ( v_{c} ( \theta ) + \xi ( r , \theta ) r^{- \alpha_{c}} ) r^{\alpha_{c}}$ and $\widehat{\psi}_{2} ( \widetilde{\theta} ) = \widetilde{\theta}_{2} \widetilde{\theta}_{1}^{\alpha_{c} - 1}$, we have by construction (see \eqref{d23})
\begin{equation} \label{d21}
\widetilde{V} ( \widetilde{x} ) = \widetilde{\theta}_{2} \widetilde{\theta}_{1}^{\alpha_{c} - 1} \widetilde{r}^{\alpha_{c}} = \widetilde{x}_{2} \widetilde{x}_{1}^{\alpha_{c} - 1} ,
\end{equation}
for $\theta$ near $\theta_{0}$.

We describe the set $M_{\lambda}$ in the new variables $\widetilde{x} = \widetilde{r} \widetilde{\theta}$ with $r = \widetilde{r} > T \max ( \rho^{1 / \alpha_{c}} ,  \delta )$, $T \geq 1$ large enough. Since we work in the region $\theta$ close to $\theta_{0}$ and $r$ small, we have $\widetilde{\theta}$ close to $( 1 , 0 )$ and then
\begin{equation} \label{d34}
\widetilde{x}_{1} \leq \widetilde{r} \leq  2 \widetilde{x}_{1} .
\end{equation}
From \eqref{d21}, a point $x = c + r \theta$ with $\theta$ near $\theta_{0}$ and $r$ small belongs to $M_{\lambda}$  iff
\begin{equation} \label{d24}
\vert  \widetilde{x}_{2}  - \lambda \widetilde{x}_{1}^{1 - \alpha_{c}} \vert = \vert \widetilde{V} ( \widetilde{x} ) - \lambda \vert \widetilde{x}_{1}^{1 - \alpha_{c}}  \leq \rho \widetilde{x}_{1}^{1 - \alpha_{c}} .
\end{equation}
Using that $\rho \widetilde{x}_{1}^{- \alpha_ {c}} \leq C T^{- \alpha_{c}} \leq 1$ for $T$ large enough and $\vert \widetilde{x}_{2} \vert \leq \widetilde{x}_{1}$ (because $\widetilde{\theta}$ is close to $( 1 , 0 )$), the previous inequality gives
\begin{equation} \label{d35}
\vert  \lambda \widetilde{x}_{1}^{- \alpha_{c}} \vert \leq 2 .
\end{equation}
We now study $\CM_{\lambda}$. Consider a point $x$ such that $\dist ( x , M_{\lambda} ) \leq \delta$, where $M_{\lambda}$ is (the part of) the levels sets described previously. By definition, there exists $y \in M_{\lambda}$ satisfying \eqref{d24} such that $\vert x - y \vert \leq \delta$. Let $\widetilde{x} = \widetilde{r} \widetilde{\theta}$ and $\widetilde{y} = \widetilde{r}_{y} \widetilde{\theta}_{y}$ denote the points $x$ and $y$ after the change of variables \eqref{d13}. By the mean value inequality and \eqref{d20}, we have
\begin{equation} \label{d25}
\vert \widetilde{x} - \widetilde{y} \vert \leq C \delta .
\end{equation}
Since $\widetilde{r}_{y} > T  \delta$, the triangle inequality and \eqref{d34} for $\widetilde{y}$ give $\vert \widetilde{r} - \widetilde{r}_{y} \vert \leq \vert \widetilde{x} - \widetilde{y} \vert \leq C \delta \leq C T^{- 1} \widetilde{r}_{y} \leq \widetilde{r}_{y} / 2$ for $T$ large enough and then
\begin{equation} \label{d36}
\widetilde{y}_{1} / 2 \leq \widetilde{r} \leq 3 \widetilde{y}_{1} .
\end{equation}
The same way, \eqref{d34} for $\widetilde{y}$ and \eqref{d25} give $\vert \widetilde{x}_{1} - \widetilde{y}_{1} \vert \leq 2 C T^{- 1} \widetilde{y}_{1} \leq \widetilde{y}_{1} /2$ for $T$ large enough and then
\begin{equation} \label{d27}
\widetilde{y}_{1} / 2 \leq \widetilde{x}_{1} \leq 3 \widetilde{y}_{1} / 2 .
\end{equation}
On the other hand, the mean value inequality, \eqref{d35} for $\widetilde{y}$, \eqref{d25} and \eqref{d27} yield
\begin{equation} \label{d29}
\vert \lambda \widetilde{y}_{1}^{1 - \alpha_{c}} - \lambda \widetilde{x}_{1}^{1 - \alpha_{c}}  \vert \leq ( \alpha_{c} - 1 ) \lambda \sup_{z \in [ \widetilde{x}_{1} , \widetilde{y}_{1} ]} z^{- \alpha_{c}} \vert \widetilde{x}_{1} - \widetilde{y}_{1} \vert \leq C \delta.
\end{equation}
Summing up, \eqref{d24} for $\widetilde{y}$, \eqref{d36}, \eqref{d29} and $\vert \widetilde{x}_{2} - \widetilde{y}_{2} \vert \leq C \delta$ (see \eqref{d25}) give
\begin{align}
\vert  \widetilde{x}_{2}  - \lambda \widetilde{x}_{1}^{1 - \alpha_{c}} \vert &\leq \vert \widetilde{x}_{2} - \widetilde{y}_{2} \vert + \vert  \widetilde{y}_{2}  - \lambda \widetilde{y}_{1}^{1 - \alpha_{c}} \vert + \vert \lambda \widetilde{y}_{1}^{1 - \alpha_{c}}  - \lambda \widetilde{x}_{1}^{1 - \alpha_{c}} \vert    \nonumber \\
&\leq C \delta + \rho \widetilde{y}_{1}^{1 - \alpha_{c}} + C \delta     \nonumber \\
&\leq  C \delta + C \rho \widetilde{r}^{1 - \alpha_{c}} ,  \label{d31}
\end{align}
for all $x \in \CM_{\lambda}$.

\begin{figure}
\begin{center}
\begin{picture}(0,0)%
\includegraphics{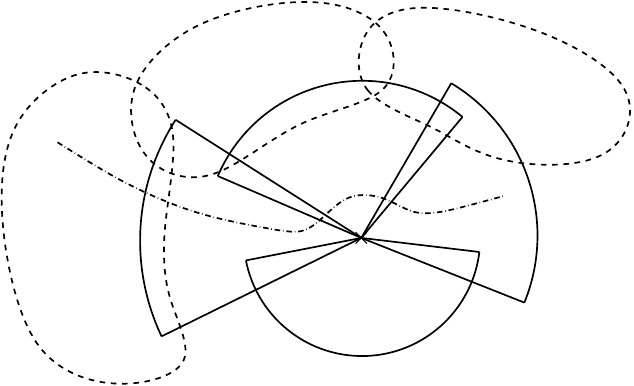}%
\end{picture}%
\setlength{\unitlength}{1184sp}%
\begingroup\makeatletter\ifx\SetFigFont\undefined%
\gdef\SetFigFont#1#2#3#4#5{%
  \reset@font\fontsize{#1}{#2pt}%
  \fontfamily{#3}\fontseries{#4}\fontshape{#5}%
  \selectfont}%
\fi\endgroup%
\begin{picture}(16845,10268)(-9633,-3093)
\put(  1,239){\makebox(0,0)[b]{\smash{{\SetFigFont{9}{10.8}{\rmdefault}{\mddefault}{\updefault}$c$}}}}
\put(-2999,5339){\makebox(0,0)[b]{\smash{{\SetFigFont{9}{10.8}{\rmdefault}{\mddefault}{\updefault}$U_{2}$}}}}
\put(3976,5339){\makebox(0,0)[b]{\smash{{\SetFigFont{9}{10.8}{\rmdefault}{\mddefault}{\updefault}$U_{3}$}}}}
\put(  1,-1111){\makebox(0,0)[b]{\smash{{\SetFigFont{9}{10.8}{\rmdefault}{\mddefault}{\updefault}$U_{5}$}}}}
\put(-149,3014){\makebox(0,0)[b]{\smash{{\SetFigFont{9}{10.8}{\rmdefault}{\mddefault}{\updefault}$U_{4}$}}}}
\put(-4274,389){\makebox(0,0)[b]{\smash{{\SetFigFont{9}{10.8}{\rmdefault}{\mddefault}{\updefault}$U_{6}$}}}}
\put(3976,614){\makebox(0,0)[b]{\smash{{\SetFigFont{9}{10.8}{\rmdefault}{\mddefault}{\updefault}$U_{7}$}}}}
\put(2626,2114){\makebox(0,0)[b]{\smash{{\SetFigFont{9}{10.8}{\rmdefault}{\mddefault}{\updefault}$V^{- 1} ( \lambda )$}}}}
\put(-7724, 14){\makebox(0,0)[b]{\smash{{\SetFigFont{9}{10.8}{\rmdefault}{\mddefault}{\updefault}$U_{1}$}}}}
\end{picture}%
\end{center}
\caption{Some elements of the covering in Lemma \ref{d37} with $U_{1} , U_{2} , U_{3}$ satisfying Case 1, $U_{4} , U_{5}$ satisfying Case 2.1 and $U_{6} , U_{7}$ satisfying Case 2.2.} \label{f5}
\end{figure}

Combining the three cases of this section and using the compacity of $\overline{X}$ and $\S^{d - 1}$, we have proven the following description of the potential $V$ illustrated in Figure \ref{f5}.

\begin{lemma}\sl \label{d37}
There exists a finite covering of $\overline{X} = \bigcup_{j \in J} U_{j}$ such that either

\underline{\rm Case 1} $U_{j}$ is an open set in $\R^{d}$ on which $\nabla_{x} V$ does not vanish and
\begin{equation*}
\forall x \in U_{j}, \qquad V ( x ) = C + y_{1} ( x ) ,
\end{equation*}
where $y ( x ) = \varphi^{- 1} ( x )$ is a $C^{1}$ diffeomorphism from $U_{j}$ to $\R^{d}$.

\underline{\rm Case 2.1} $U_{j}$ is a conical sector with vertex at a critical point $c$ and
\begin{equation*}
\forall x \in U_{j}, \qquad V ( x ) = \vert \widehat{x} ( x ) \vert^{\alpha_{c}} ,
\end{equation*}
where $\widehat{x} ( x )$ is a pointed $C^{1}$ diffeomorphism near $c$.

\underline{\rm Case 2.2} $U_{j}$ is a conical sector with vertex at a critical point $c$ and
\begin{equation*}
\forall x \in U_{j}, \qquad V ( x ) = \widetilde{x}_{2} ( x ) \widetilde{x}_{1}^{\alpha_{c} - 1} ( x ) ,
\end{equation*}
where $\widetilde{x} ( x )$ is a pointed $C^{1}$ diffeomorphism near $c$ (with $\widetilde{x}_{1} ( x ) \geq 0$ for $x \in U_{j}$).
\end{lemma}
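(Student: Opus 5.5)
The plan is a compactness argument that glues together the three local normal forms constructed above. Fix $\varepsilon>0$ as in the beginning of this section, so that $V^{-1}([E-2\varepsilon,E+2\varepsilon])\cap\overline{X}$ lies in any prescribed neighborhood of $V^{-1}(E)\cap\overline{X}$. Away from this neighborhood, $V$ has no critical point of energy near $E$. Still, for the covering of all of $\overline{X}$ we need a local description at every point. At a regular point $x_0$ (of any energy) we use the $C^1$ straightening of Case~1: the implicit function theorem gives a local $C^1$ diffeomorphism $y=\varphi^{-1}(x)$ with $V=V(x_0)+y_1$ on a small open ball $U_{x_0}$. This includes points of $\partial X$, since $V$ was extended to $Y$ without critical points in $Y\setminus X$. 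The critical points of $V$ in the relevant energy window are finite in number, by the discussion after Remark~\ref{a85}.

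Next, for each critical point $c$, we cover a punctured neighborhood of $c$ by conical sectors. For every $\theta_0\in\S^{d-1}$, Case~2.1 (if $v_c(\theta_0)\neq0$) or Case~2.2 (if $v_c(\theta_0)=0$, using \eqref{a82}) provides an open neighborhood $W_{\theta_0}\subset\S^{d-1}$ and a radius $r_{\theta_0}>0$. On the sector $\{c+r\theta;\ 0<r<r_{\theta_0},\ \theta\in W_{\theta_0}\}$, the pointed diffeomorphism $\widehat{x}$ gives $V=\vert\widehat{x}\vert^{\alpha_c}$ by \eqref{d4}, or $\widetilde{x}$ gives $V=\widetilde{x}_2\widetilde{x}_1^{\alpha_c-1}$ by \eqref{d21}. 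In the case $v_c(\theta_0)<0$ one replaces $V$ by $-V$, or writes $V=-\vert\widehat{x}\vert^{\alpha_c}$; the statement should be read up to this sign and the additive constant $E$, which was set to $0$. By compactness of $\S^{d-1}$ we extract finitely many $\theta_0$'s and take $r_c=\min r_{\theta_0}>0$. The sectors truncated at radius $r_c$ then cover $B(c,r_c)\setminus\{c\}$. The point $c$ itself lies in the closure of every sector; we either include it as the common vertex, which is how ``conical sector with vertex at $c$'' is meant, or note that a finite covering up to finitely many points suffices for all later integral estimates.

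Finally, the set $K=\overline{X}\setminus\bigcup_c B(c,r_c/2)$ is compact and contains no critical point of $V$. Each of its points therefore has a Case~1 neighborhood, and finitely many of these cover $K$. Together with the finitely many sectors around each $c$, this gives the covering. The property $\widetilde{x}_1\ge0$ in Case~2.2 follows from \eqref{d34}, since $\widetilde{\theta}$ stays close to $(1,0)$ on the sector.

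The main obstacle I expect is uniformity. The neighborhoods in Cases~2.1 and~2.2 were constructed with $\theta$ ``near $\theta_0$'' and $r$ ``small'', and the smallness of $r$ depends on $\theta_0$ through the choice of $\varepsilon$ in \eqref{d15}. I would handle this by first choosing the finite angular cover and then taking the minimum radius, as above. One must also check that on each chosen angular patch the construction of $\widehat{v_c}$ (positive, equal to $v_c$ near $\theta_0$) or of $\chi(\cdot/\varepsilon)$ is valid on the whole patch, not just at $\theta_0$. For this I would shrink $W_{\theta_0}$ to the region where $\widehat{v_c}=v_c$, respectively where $\chi(\cdot/\varepsilon)=1$, before applying compactness.
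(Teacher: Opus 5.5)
Your overall route is the paper's: there the lemma is obtained simply by combining Cases~1, 2.1 and 2.2 of Section~\ref{s3} with the compactness of $\overline{X}$ and $\S^{d-1}$. Several parts of your argument are fine: the angular compactness (a finite set of directions $\theta_{0}$, each patch shrunk to where $\widehat{v_{c}}=v_{c}$, resp.\ where the cut-off in \eqref{d23} equals $1$, then the minimal radius), the sign of $v_{c}(\theta_{0})$, and $\widetilde{x}_{1}\geq 0$. One step, however, fails as written: the claim that $K=\overline{X}\setminus\bigcup_{c}B(c,r_{c}/2)$ contains no critical point of $V$.

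Assumption~\ref{h1} is imposed only at the energy $E$, so critical points of $V$ at other energies are unconstrained. They may be degenerate, accumulate, or fill an open set (e.g.\ $V$ locally constant at a level $E'\neq E$). Near such a point none of the three normal forms exists, since each forces $\nabla V\neq 0$ off the vertex. So $K$ cannot be covered by Case~1 charts, and in general no covering of all of $\overline{X}$ exists. Your sentence ``for the covering of all of $\overline{X}$ we need a local description at every point'' is exactly where this breaks. What the paper's argument actually produces, since all three cases are centred at points of $V^{-1}(E)\cap\overline{X}$, is a finite covering of a neighbourhood of $V^{-1}(E)\cap\overline{X}$. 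That is all Lemma~\ref{b30} uses: for $\lambda\in[E-\varepsilon,E+\varepsilon]$ and $\rho,\delta$ small, uniform continuity of $V$ puts $\CM_{\lambda}\cap\overline{X}$ inside $V^{-1}([E-2\varepsilon,E+2\varepsilon])\cap\overline{X}$. This is the reduction made at the beginning of Section~\ref{s1}.

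The fix is to replace $K$ by $K_{\varepsilon}=(V^{-1}([E-2\varepsilon,E+2\varepsilon])\cap\overline{X})\setminus\bigcup_{c\in\CC(E)}B(c,r_{c}/2)$. This set is compact and, for $\varepsilon$ small, contains only regular points: the critical points of energy $E$ are isolated by \eqref{a4} and \eqref{a81}, and nearby energies are regular. Your Case~1 charts then cover it and the rest of your argument goes through. Alternatively, if Assumption~\ref{h1} holds at every energy, as in Theorem~\ref{b40}, there are finitely many critical points in total. In that case your argument covers all of $\overline{X}$ verbatim, with the constant $V(c)$ restored in the normal forms.
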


\subsubsection{Resolvent estimate}

We can now prove the resolvent estimate for $Q$. We will take $\rho = S \delta^{\alpha}$ with $S > 0$ large enough and $\delta = s h^{\beta}$ with $s > 0$ small enough, but keeping the notations $\rho , \delta$ in the computations allows for a better understanding of the arguments. We first obtain the resolvent estimates on the imaginary axis and take $z = i \lambda$ with $\lambda \in [ E - \varepsilon , E + \varepsilon ]$. Following \cite{CoGa23_01}, we define the localization function on $X$
\begin{equation} \label{a64}
\chi ( x ) = \phi \Big( \frac{1}{\delta} \sgn \big( V ( x ) - \lambda \big) \dist \big( x , M_{\lambda} \big) \Big) ,
\end{equation}
where $\phi : \R \rightarrow [ - 1 , 1 ]$ is the unique odd function such that $\phi ( t ) = \min ( t , 1 )$ for $t \geq 0$. In particular, $\chi$ is a Lipschitz function which satisfies
\begin{gather}
\vert \chi ( x ) \vert \leq 1 \text{ and } \vert \nabla \chi ( x ) \vert \leq \delta^{- 1}  \text{ for all } x \in X ,   \label{a60}  \\
\chi ( x ) ( V ( x) - \lambda ) \geq 0 \text{ for all } x \in X ,   \label{a61}   \\
\chi ( x ) = \sgn ( V ( x ) - \lambda ) \text{ for all } x \in X \setminus \CM_{\lambda} .   \label{a62}
\end{gather}

For $u \in D ( P )$ we have
\begin{equation*}
\re \< u , ( Q - i \lambda ) u \> = h^{2} \Vert \nabla u \Vert^{2} + \Vert G u \Vert^{2} ,
\end{equation*}
and then
\begin{equation} \label{a10}
h^{2} \Vert \nabla u \Vert^{2} + \Vert G u \Vert^{2} \leq \Vert ( Q - i \lambda ) u \Vert \Vert u \Vert .
\end{equation}
On the other hand, \eqref{a12} and \eqref{a13} give
\begin{align*}
\im \< \chi u , ( Q - i \lambda ) u \> &= \im \< \chi u , - h^{2} \Delta u \> + \< u , \chi ( V - \lambda ) u \>  \\
&= h^{2} \im \< ( \nabla \chi ) u , \nabla u \> + \< u , \chi ( V - \lambda ) u \> .
\end{align*}
Using that $\vert \chi \vert \leq 1$ and $\vert \nabla \chi \vert \leq \delta^{- 1}$, this implies
\begin{align}
\< u , \chi ( V - \lambda ) u \> &\leq \Vert ( Q - i \lambda ) u \Vert \Vert u \Vert + h^{2} \delta^{- 1} \Vert \nabla u \Vert \Vert u \Vert \nonumber \\
&\leq \Vert ( Q - i \lambda ) u \Vert \Vert u \Vert + h \delta^{- 1} \Vert ( Q - i \lambda ) u \Vert^{1 / 2} \Vert u \Vert^{3 / 2} .  \label{a14}
\end{align}

To estimate the norm of $u$ in $L^{2} ( X )$, we decompose $X \subset X_{1} \cup X_{2} \cup X_{3}$ with
\begin{equation*}
X_{1} = \CC_{\max} ( E ) + B ( 0 , R h^{\beta} ) , \quad X_{2} = \CM_{\lambda} \setminus \big( \CC_{\max} ( E ) + B ( 0 , R h^{\beta} ) \big) , \quad X_{3} = X \setminus \CM_{\lambda} ,
\end{equation*}
and write
\begin{equation} \label{a16}
\Vert u \Vert^{2} \leq \CI_{1} + \CI_{2} + \CI_{3} \qquad \text{ where } \qquad \CI_{j} = \int_{X_{j}} \vert u ( x ) \vert^{2} d x .
\end{equation}
From \eqref{a9} and \eqref{a10}, we directly have
\begin{equation} \label{a15}
\CI_{1} \leq \Vert G u \Vert^{2} \leq \Vert ( Q - i \lambda ) u \Vert \Vert u \Vert .
\end{equation}
The contribution of $\CI_{2}$ is estimated by the following result proved in Section \ref{s5}.

\begin{lemma}\sl \label{b30}
For $R$ large enough and then $h$ small enough, we have
\begin{equation*}
\CI_{2} \leq C \delta \Vert u \Vert \Vert \nabla u \Vert + C \delta \Vert u \Vert^{2} .
\end{equation*}
\end{lemma}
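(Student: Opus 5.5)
We estimate $\CI_{2}$ locally using the finite covering of Lemma~\ref{d37}. Take a partition of unity subordinate to $(U_{j})_{j}$ and bound $\int_{X_{2} \cap U_{j}} \vert u \vert^{2}$ separately in each chart. By \eqref{d10}, \eqref{d19} and the boundedness of the Jacobians, it suffices to prove the estimate in the straightened variables $y$, $\widehat{x}$ or $\widetilde{x}$. The tool in every case is a one-dimensional Poincar\'e/trace-type inequality for a thin slab. If $I$ is an interval of length $\ell$ contained in an interval $J$ of length $L \gtrsim \ell$, then
\begin{equation*}
\int_{I} \vert f \vert^{2} \leq C \ell \Big( \Vert f \Vert_{L^{2} ( J )} \Vert f^{\prime} \Vert_{L^{2} ( J )} + L^{- 1} \Vert f \Vert_{L^{2} ( J )}^{2} \Big) ,
\end{equation*}
which follows from $\vert f ( t ) \vert^{2} \leq \vert f ( s ) \vert^{2} + 2 \int_{J} \vert f f^{\prime} \vert$ averaged over $s \in J$. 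We apply this in the direction transverse to the level sets, integrate over the remaining variables, and use Cauchy--Schwarz. This gives $C \ell ( \Vert u \Vert \Vert \nabla u \Vert + \Vert u \Vert^{2} )$ with $L$ of order one, so the task reduces to showing that the transverse width $\ell$ of $\CM_{\lambda} \cap U_{j}$ minus $X_{1}$ is $\CO ( \delta )$.

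\textbf{Case 1.} By \eqref{b11}, the set $\CM_{\lambda}$ is contained in the slab $\vert y_{1} - ( \lambda - E ) \vert \leq \rho + L \delta$. Since $\rho = S \delta^{\alpha} = \CO ( \delta )$ (as $\alpha > 1$ and $\delta \to 0$), the width is $\CO ( \delta )$, with the transverse variable $y_{1}$ and $J$ a fixed interval.

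\textbf{Case 2.1.} By \eqref{d8}, the set $\CM_{\lambda}$ lies in the radial shell $\vert \widehat{r} - \lambda^{1 / \alpha_{c}} \vert \leq C \delta + C \rho \lambda^{( 1 - \alpha_{c} ) / \alpha_{c}}$. Here the main concern is that $\rho \lambda^{( 1 - \alpha_{c} ) / \alpha_{c}}$ is not $\CO ( \delta )$ when $\lambda$ is small. We split according to the size of $\lambda$.
\begin{itemize}
\item If $\lambda \geq T \rho$, then $\rho \lambda^{( 1 - \alpha_{c} ) / \alpha_{c}} \leq \rho^{1 / \alpha_{c}} T^{( 1 - \alpha_{c} ) / \alpha_{c}}$.
\item If $\lambda < T \rho$, then $M_{\lambda}$ is contained in $\widehat{r} \lesssim \rho^{1 / \alpha_{c}}$, hence $\CM_{\lambda} \subset B ( c , C ( \rho^{1 / \alpha_{c}} + \delta ) )$.
\end{itemize}
For $c \in \CC_{\max}$ we have $\rho^{1 / \alpha} = S^{1 / \alpha} s h^{\beta}$, and this ball is contained in $X_{1}$ once $R \gg S^{1 / \alpha} s + s$, so it contributes nothing. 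For $c$ with $\alpha_{c} < \alpha$ we get $\rho^{1 / \alpha_{c}} = o ( \delta )$, since $\delta^{\alpha / \alpha_{c}} \ll \delta$. In the first regime, when $c \in \CC_{\max}$, we also use that points of $X_{2}$ have $\widehat{r} \gtrsim R h^{\beta}$. This forces $\lambda \gtrsim ( R h^{\beta} )^{\alpha}$ and improves the error to $\rho ( R h^{\beta} )^{1 - \alpha} = S s^{\alpha} R^{1 - \alpha} h^{\beta} \ll \delta$. The width is then $\CO ( \delta )$, and the one-dimensional inequality applies in $\widehat{r}$; in the small-ball regime we apply it instead on a box of size $\CO ( \delta )$.

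\textbf{Case 2.2.} Using \eqref{d31}, for $\widetilde{r} > T \max ( \rho^{1 / \alpha_{c}} , \delta )$ the set $\CM_{\lambda}$ is a slab in the $\widetilde{x}_{2}$ direction around the graph $\widetilde{x}_{2} = \lambda \widetilde{x}_{1}^{1 - \alpha_{c}}$, of width $C \delta + C \rho \widetilde{r}^{1 - \alpha_{c}}$. Outside $X_{1}$ (for $c \in \CC_{\max}$) we have $\widetilde{r} \gtrsim R h^{\beta}$, so again $\rho \widetilde{r}^{1 - \alpha} \lesssim S s^{\alpha} R^{1 - \alpha} h^{\beta} \leq \delta$ for $R$ large. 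The remaining region $\widetilde{r} \leq T \max ( \rho^{1 / \alpha_{c}} , \delta )$ is a ball of radius $\CO ( h^{\beta} )$, which lies in $X_{1}$ for $R$ large, or of radius $\CO ( \delta )$ when $\alpha_{c} < \alpha$. In the slab region we apply the one-dimensional inequality in $\widetilde{x}_{2}$ for fixed $\widetilde{x}_{1}$, taking $J$ of length comparable to $\widetilde{x}_{1}$ so that it stays inside the cone. This yields the extra term $\delta \widetilde{x}_{1}^{- 1} \Vert u \Vert^{2}$, and $\widetilde{x}_{1}^{- 1}$ is unbounded near $c$.

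The main obstacle is exactly this $L^{-1}$ term near the vertex, both in Case 2.2 and in the ball regions. To handle it, I would use a Hardy-type inequality in the radial variable to trade $\widetilde{r}^{- 1} \Vert u \Vert$ for $\Vert \nabla u \Vert$. Alternatively, I would choose $J$ of fixed length by exploiting that the transverse line in the $\widetilde{x}_{2}$ direction can be continued beyond the cone through the neighbouring Case 2.1 charts. I expect the second option to be cleaner. Summing the finitely many contributions gives $\CI_{2} \leq C \delta \Vert u \Vert \Vert \nabla u \Vert + C \delta \Vert u \Vert^{2}$.
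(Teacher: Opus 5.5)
Your architecture matches the paper's. You use the covering of Lemma~\ref{d37}, the level-set descriptions \eqref{b11}, \eqref{d8}, \eqref{d31}, and a split according to the size of $\lambda$. You also use $r > R h^{\beta}$ on $X_{2}$, with $R$ chosen after $s$ and $S$, to make $\rho \lambda^{(1-\alpha)/\alpha}$ and $\rho \widetilde{r}^{1-\alpha}$ at most $\delta$; this keeps $C$ independent of $s,S,R$.

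However, you stop at the step you call the main obstacle, so as written the proof is incomplete there. That obstacle is an artefact of your localized slab inequality. The paper uses Lemma~\ref{b18}, whose right-hand side $2(R_{2}-R_{1})\Vert g\Vert\Vert\nabla g\Vert$ involves norms over all of $\R^{d}$ (over whole lines in its one-dimensional form). It therefore contains no $L^{-1}$ term. The paper applies it to $\zeta_{j}u$, where $\zeta_{j}$ is a fixed-size cut-off equal to $1$ near $U_{j}$, hence on a full neighbourhood of $c$; it is not an angular cut-off. In the critical cases the only $\Vert u\Vert^{2}$ term then comes from $\nabla\zeta_{j}=\CO(1)$.

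This works because $\widehat{x}$ and $\widetilde{x}$ are pointed diffeomorphisms of a \emph{full} pointed neighbourhood of $c$ with uniformly bounded Jacobians, see \eqref{d6} and \eqref{d20}. In Case 2.2, $\widetilde{\psi}(\cdot,r)$ is by construction a diffeomorphism of the whole sphere $\S^{d-1}$. Hence $\widetilde{\zeta_{j}}\widetilde{u}\in H^{1}(\R^{d})$. One enlarges the region of integration to the whole set $\{\vert\widetilde{x}_{2}-\lambda\widetilde{x}_{1}^{1-\alpha_{c}}\vert\leq C\delta\}$ and integrates along complete $\widetilde{x}_{2}$-lines. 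Nothing has to stay inside the cone, and no neighbouring Case 2.1 chart enters.

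So your second option is the right one, but its justification is the global-in-angle construction of $\widetilde{x}$, not continuation through other charts. Your first option can also be made to work, but not as stated. The Hardy inequality $\Vert u/r\Vert\lesssim\Vert\nabla u\Vert$ fails for $d=2$, the first dimension where Case 2.2 occurs. What you actually need is $\int\vert g\vert^{2}\vert x-c\vert^{-1}dx\leq2\Vert g\Vert\Vert\nabla g\Vert$. This holds for all $d\geq2$ by the same radial argument that proves Lemma~\ref{b18}.

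The ball regions never had this difficulty. Lemma~\ref{b18} with $R_{1}=0$ gives $\int_{\vert x-c\vert\leq\ell}\vert\zeta_{j}u\vert^{2}dx\leq2\ell\Vert\zeta_{j}u\Vert\Vert\nabla(\zeta_{j}u)\Vert$ directly. Alternatively, enclose the ball in a slab of width $2\ell$ and take $J$ of fixed length. Applying your inequality ``on a box of size $\CO(\delta)$'' with $J$ equal to a side of the box would instead give $L^{-1}\sim\delta^{-1}$, hence only the useless bound $\CO(\Vert u\Vert^{2})$.

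There are two smaller points. In Case 2.1 the radial integration carries the weight $\widehat{r}^{\,d-1}$. Your unweighted inequality does not control this when the shell comes within $\CO(\delta)$ of $c$, which happens for $c\notin\CC_{\max}(E)$. The fix is to take $J$ outward and use $r^{d-1}\leq t^{d-1}$ for $t\geq r$, which is again the proof of Lemma~\ref{b18}. Also, near $\partial X$ a fixed-length $J$ only exists after extending $u$ to $H^{1}$ across the boundary, as the paper does with $\check{u}$.
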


\noindent
Combining Lemma \ref{b30} with \eqref{a10}, it comes
\begin{align}
\CI_{2} &\leq C \delta^{2} \Vert \nabla u \Vert^{2}  + \frac{1}{3} \Vert u \Vert^{2}   \nonumber \\
&\leq C \delta^{2} h^{- 2} \Vert ( Q - i \lambda ) u \Vert \Vert u \Vert + \frac{1}{3} \Vert u \Vert^{2}  , \label{a18}
\end{align}
for $R$ large enough and then $h$ small enough. For $x \in X_{3}$, we have $\vert V ( x ) - \lambda \vert \geq S \delta^{\alpha}$ and $\chi ( x ) = \sgn ( V ( x ) - \lambda )$ from \eqref{a63}, \eqref{a59} and \eqref{a62}. Then, \eqref{a61} and \eqref{a14} yield
\begin{align}
\CI_{3} &\leq S^{- 1} \delta^{- \alpha} \< u , \chi ( V - \lambda ) u \> \nonumber  \\
&\leq S^{- 1} \delta^{- \alpha} \Vert ( Q - i \lambda ) u \Vert \Vert u \Vert + S^{- 1} h \delta^{- 1 - \alpha} \Vert ( Q - i \lambda ) u \Vert^{1 / 2} \Vert u \Vert^{3 / 2}  \nonumber \\
&\leq \big( S^{- 1} \delta^{- \alpha} + S^{- 2} h^{2} \delta^{- 2 - 2 \alpha} \big) \Vert ( Q - i \lambda ) u \Vert \Vert u \Vert + \frac{1}{4} \Vert u \Vert^{2} . \label{a17}
\end{align}
Combining \eqref{a16} with the estimates \eqref{a15}, \eqref{a18} and \eqref{a17}, we deduce
\begin{equation*}
\Vert u \Vert^{2} \leq \big( 1 + C \delta^{2} h^{- 2} + S^{- 1} \delta^{- \alpha} + S^{- 2} h^{2} \delta^{- 2 - 2 \alpha} \big) \Vert ( Q - i \lambda ) u \Vert \Vert u \Vert + \frac{7}{12} \Vert u \Vert^{2} .
\end{equation*}
Using $\delta = s h^{\beta}$ and \eqref{a19}, we have
\begin{equation*}
1 + C \delta^{2} h^{- 2} + S^{- 1} \delta^{- \alpha} + S^{- 2} h^{2} \delta^{- 2 - 2 \alpha} = 1 + ( C s^{2} + S^{- 1} s^{- \alpha} + S^{- 2} s^{- 2 - 2 \alpha} ) h^{- \sigma} .
\end{equation*}
Taking $s > 0$ small enough, then $S \geq 1$ large enough and then $R \geq 1$ large enough, the previous equation becomes $\Vert u \Vert^{2} \leq M^{- 1} h^{- \sigma} \Vert ( Q - i \lambda ) u \Vert \Vert u \Vert$ for $h$ small enough. It implies that $Q$ has no spectrum in $i [ E - \varepsilon, E + \varepsilon ]$ and
\begin{equation} \label{a20}
\Vert ( Q - i \lambda )^{- 1} \Vert \leq M^{- 1} h^{- \sigma} ,
\end{equation}
for $\lambda \in [ E - \varepsilon, E + \varepsilon ]$ and $h$ small enough.

To estimate the resolvent of $Q$ in the complex plane, we write
\begin{equation*}
( Q - z ) = \big( 1 - ( z - i \lambda ) ( Q - i \lambda )^{- 1 } \big) ( Q - i \lambda ) ,
\end{equation*}
with $\lambda = \im z$. Together with \eqref{a20}, it shows that, for $h$ small enough, $Q$ has no spectrum in $[ 0 , M h^{\sigma} / 2 ] + i [ E - \varepsilon , E + \varepsilon ]$ and that
\begin{equation} \label{a22}
\Vert ( Q - z )^{- 1} \Vert \leq 2 M^{- 1} h^{- \sigma} ,
\end{equation}
for $z$ in that set. Moreover, using one more time \eqref{a10} leads to
\begin{equation*}
\Vert \nabla ( Q - i \lambda )^{- 1} u \Vert^{2} \leq h^{- 2} \Vert u \Vert \Vert ( Q - i \lambda )^{- 1} u \Vert \leq M^{-1} h^{- 2 - \sigma} \Vert u \Vert^{2} ,
\end{equation*}
and then, for $\lambda \in [ E - \varepsilon, E + \varepsilon ]$,
\begin{equation} \label{a21}
h^{\beta} \Vert \nabla ( Q - i \lambda )^{- 1} \Vert \leq M^{- 1 / 2} h^{- \sigma} ,
\end{equation}
since $1 + \sigma / 2 = \beta + \sigma$. Writing $h^{\beta} \nabla ( Q - z )^{- 1} = h^{\beta} \nabla ( Q - i \lambda )^{- 1} ( 1 - ( z - i \lambda ) ( Q - i \lambda )^{- 1 } )^{- 1}$, \eqref{a20} and \eqref{a21} give
\begin{equation} \label{a25}
h^{\beta} \Vert \nabla ( Q - z )^{- 1} \Vert \leq 2 M^{- 1 / 2} h^{- \sigma} .
\end{equation}
for $z \in [ 0 , M h^{\sigma} / 2 ] + i [ E - \varepsilon , E + \varepsilon ]$. Changing $M$, Proposition \ref{a33} follows from \eqref{a22} and \eqref{a25}.

\subsubsection{Proof of Lemma \ref{b30}} \label{s5}

From the finite covering of Lemma \ref{d37}, we write
\begin{equation*}
\CI_{2} = \int_{X_{2}} \vert u ( x ) \vert^{2} d x \leq \sum_{1 \leq j \leq J} \int_{X_{2} \cap U_{ j}} \vert u ( x ) \vert^{2} d x ,
\end{equation*}
and we estimate in the rest of this part each term of the sum using the description of the set $\CM_{\lambda}$ obtained in Section \ref{s3}.

\underline{Case A} We first consider the contribution to $\CI_{2}$ of a non-critical region $U_{j}$ (Case 1 of Section \ref{s3} and Lemma \ref{d37}). From \eqref{b10} and \eqref{b11}, the set $\CM_{\lambda}$ satisfies
\begin{equation} \label{b14}
\CM_{\lambda} \subset \{ y ; \ \vert y_{1} - ( \lambda - E ) \vert \leq \rho + L \delta \leq C \delta \} ,
\end{equation}
in the variables $y = \varphi^{- 1} ( x )$. Here, we used that $\rho = ( S \delta^{\alpha - 1} ) \delta \leq \delta$ for $h$ small enough. Let $\check{u} ( y ) \in H^{1} ( \R^{d} )$ denote an extension to $\varphi^{- 1} ( Y )$ (see the beginning of Section \ref{s1}) of $u ( \varphi^{- 1} ( y ) )$ truncated near $\varphi^{- 1} ( \overline{U_{j}} )$. We now use the following estimate

\begin{lemma}[Lemma B.1 of \cite{CoGa23_01}]\sl \label{b18}
for all $g \in H^{1} ( \R^{d} )$ and $0 \leq R_{1} \leq R_{2}$, we have
\begin{equation*}
\int_{R_{1} \leq \vert x \vert \leq R_{2}} \vert g  ( x ) \vert^{2} d x \leq 2 ( R_{2} - R_{1} ) \Vert g \Vert \Vert \nabla g \Vert .
\end{equation*}
\end{lemma}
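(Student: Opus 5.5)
The inequality is radial, so I would reduce it to a one-dimensional estimate along rays and then integrate over directions. By density it suffices to take $g \in C^{\infty}_{0} ( \R^{d} )$; both sides are continuous in $H^{1}$, so the general case follows by approximation. Write $x = r \theta$ with $\theta \in \S^{d - 1}$. The left-hand side is
\begin{equation*}
\int_{\S^{d - 1}} \int_{R_{1}}^{R_{2}} \vert g ( r \theta ) \vert^{2} r^{d - 1} \, d r \, d \theta .
\end{equation*}

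The pointwise bound comes from the fundamental theorem of calculus along the ray from $r$ to infinity, using compact support:
\begin{equation*}
\vert g ( r \theta ) \vert^{2} = - \int_{r}^{+ \infty} \partial_{t} \vert g ( t \theta ) \vert^{2} \, d t \leq 2 \int_{r}^{+ \infty} \vert g ( t \theta ) \vert \, \vert \nabla g ( t \theta ) \vert \, d t .
\end{equation*}
For $r \geq R_{1}$ we have $r^{d - 1} \leq t^{d - 1}$ whenever $t \geq r$. This is the key point: the weight is monotone in the right direction, which is exactly why we integrate outward. Hence
\begin{equation*}
\vert g ( r \theta ) \vert^{2} r^{d - 1} \leq 2 \int_{0}^{+ \infty} \vert g ( t \theta ) \vert \, \vert \nabla g ( t \theta ) \vert \, t^{d - 1} \, d t ,
\end{equation*}
and the right-hand side no longer depends on $r$. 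Integrating in $r \in [ R_{1} , R_{2} ]$ produces the factor $( R_{2} - R_{1} )$. Integrating in $\theta$ then gives
\begin{equation*}
2 ( R_{2} - R_{1} ) \int_{\R^{d}} \vert g \vert \, \vert \nabla g \vert \, d x \leq 2 ( R_{2} - R_{1} ) \Vert g \Vert \Vert \nabla g \Vert ,
\end{equation*}
where the last step is Cauchy--Schwarz.

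The only step needing care is the monotonicity of the weight $r^{d - 1}$. Integrating toward the origin would fail for $d \geq 2$ because the weight decreases in that direction; integrating toward infinity avoids this, at the cost of requiring decay. That decay is supplied by the compact-support reduction together with density. If $R_{1} = 0$ the argument is unchanged, since then $r \geq 0$ and the same inequality $r^{d - 1} \leq t^{d - 1}$ for $t \geq r$ holds.
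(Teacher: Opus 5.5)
Your proof is correct and complete. Integrating $\partial_t \vert g(t\theta)\vert^{2}$ outward along each ray, using $r^{d-1}\le t^{d-1}$ for $t\ge r$, then integrating over $r\in[R_1,R_2]$ and $\theta\in\S^{d-1}$ and applying Cauchy--Schwarz gives exactly the constant $2(R_2-R_1)$. The reduction to $C^{\infty}_{0}(\R^{d})$ is legitimate, since both sides are continuous on $H^{1}(\R^{d})$.

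The paper gives no argument of its own for this lemma; it quotes it from \cite{CoGa23_01}. So there is no competing route to compare with, and your radial argument supplies a self-contained proof of the cited estimate. It also covers $d=1$, where $\S^{0}=\{\pm 1\}$. Together with translation invariance, that case handles the one-variable applications of the lemma in Section~\ref{s5}.
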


\noindent
Using that the change of variables $x = \varphi ( y )$ is $C^{1}$, $\Vert \check{u} \Vert_{H^{1}} \leq C \Vert u \Vert_{H^{1}}$, \eqref{b14}, the Fubini theorem, the Cauchy--Schwarz inequality and Lemma \ref{b18} applied in the variable $y_{1}$, the contribution of $\CI_{2}$ in $U_{j}$ satisfies
\begin{align}
\int_{\CM_{\lambda} \cap U_{j}} \vert u ( x ) \vert^{2} d x &\leq C \int_{\varphi^{- 1} ( \CM_{\lambda} ) \cap \varphi^{- 1} ( U_{j} )} \vert \check{u} ( y ) \vert^{2} d y \nonumber \\
& \leq C \int d y_{2} \cdots d y_{d} \int_{\vert y_{1} - ( \lambda - E ) \vert \leq C \delta} \vert \check{u} ( y ) \vert^{2} d y_{1}   \nonumber \\
&\leq C \delta \int \Vert \check{u} \Vert_{y_{1}} \Vert \nabla \check{u} \Vert_{y_{1}} d y_{2} \cdots d y_{d} \leq C \delta \Vert \check{u} \Vert \Vert \nabla \check{u} \Vert   \nonumber  \\
&\leq  C \delta \Vert u \Vert \Vert \nabla u \Vert + C \delta \Vert u \Vert^{2}  . \label{b12}
\end{align}

\underline{Case B} We now study the contribution to $\CI_{2}$ of a neighborhood of a critical point $c \in \CC ( E ) \setminus \CC_{\max} ( E )$ and we use the spherical variables $x = c + r \theta$. Since $\S^{d - 1}$ is compact, it is enough to estimate this contribution in the angular directions close to a finite number of angles $\theta_{0} \in \S^{d -1}$. Following Section \ref{s3}, we distinguish between $v_{c} ( \theta_{0} ) \neq 0$ and $v_{c} ( \theta_{0} ) = 0$.

\underline{Case B.1}  Assume that $v_{c} ( \theta_{0} ) \neq 0$. In the sequel we suppose that $v_{c} ( \theta_{0} ) > 0$ (the other configuration can be treated in the same way) and we estimate the contribution to $\CI_{2}$ of a critical region $U_{j}$ as in Case 2.1 of Section \ref{s3} and Lemma \ref{d37}. We first consider the case $\lambda \leq \delta^{\alpha_{c}}$. Since $\alpha > \alpha_{c}$, we have $\lambda + \rho \leq 2 \delta^{\alpha_{c}}$ for $h$ small enough. Moreover, \eqref{a57} and $v_{c} ( \theta_{0} ) > 0$ give
\begin{equation} \label{d11}
V ( x ) \geq r^{\alpha_{c}} / C ,
\end{equation}
for $x = c + r \theta$ with $r$ near $0$ and $\theta$ near $\theta_{0}$. Then, a point $c+ r \theta \in M_{\lambda}$ as before satisfies $r \leq C ( \lambda + \rho )_{+}^{1 / \alpha_{c}} \leq C \delta$. It implies that a point $c+ r \theta \in \CM_{\lambda}$ satisfies $r \leq ( C + 1 ) \delta$. Here, $\CM_{\lambda}$ is restricted to the points at distance at most $\delta$ from the part of $M_{\lambda}$ with $\theta$ near $\theta_{0}$. Consider $\zeta_{j} \in C^{\infty}_{0} ( \R^{d} )$ with $\zeta_{j} = 1$ near $U_{j}$. Applying Lemma \ref{b18} to $\zeta_{j} u \in H^{1} ( \R^{d} )$ gives
\begin{align}
\int_{\CM_{\lambda} \cap U_{j}} \vert u ( x ) \vert^{2} d  x &\leq \int_{\vert x \vert \leq ( C + 1 ) \delta} \big\vert \zeta_{j} u ( x ) \big\vert^{2} d x  \leq C \delta \big\Vert \zeta_{j} u \big\Vert  \big\Vert \nabla \big( \zeta_{j} u \big) \big\Vert  \nonumber \\
&\leq C \delta \Vert u \Vert \Vert \nabla u \Vert + C \delta \Vert u \Vert^{2} ,   \label{b19}
\end{align}
if $\lambda \leq \delta^{\alpha_{c}}$. Assume now that $\lambda > \delta^{\alpha_{c}}$. Since $\rho \leq \delta^{\alpha_{c}} / 2$ for $h$ small enough because $\alpha > \alpha_{c}$, we have $\rho < \lambda / 2$ and we can apply \eqref{d8} which shows that a point $x \in \CM_{\lambda}$ satisfies
\begin{equation} \label{d9}
\vert \widehat{r}  - \lambda^{1 / \alpha_{c}} \vert \leq C \delta + C \rho \lambda^{\frac{1 - \alpha_{c}}{\alpha_{c}}} \leq C \delta ,
\end{equation}
where $\widehat{x} = \widehat{r} \widehat{\theta}$ are the spherical coordinates of $x$ after the change of variables \eqref{d1}. Here, we used that $\rho \lambda^{\frac{1 - \alpha_{c}}{\alpha_{c}}} \leq \delta^{\alpha_{c}} \delta^{1 - \alpha_{c}} \leq \delta$ for $h$ small enough.
Using that the Jacobian of the change of variables \eqref{d1} and its inverse are uniformly bounded and \eqref{d10}, Lemma \ref{b18} and \eqref{d9} give
\begin{align}
\int_{\CM_{\lambda} \cap U_{j}} \vert u ( x ) \vert^{2} d  x &= \int_{\CM_{\lambda} \cap U_{j}} \big\vert \zeta_{j} u ( x ) \big\vert^{2} d  x   \nonumber \\
&\leq C \int_{\lambda^{1 / \alpha_{c}} - C \delta \leq \vert \widehat{x} \vert \leq \lambda^{1 / \alpha_{c}} + C \delta} \big\vert \widehat{\zeta_{j}} \widehat{u} ( \widehat{x} ) \big\vert^{2} d  \widehat{x}   \nonumber \\
&\leq C \delta \big\Vert \widehat{\zeta_{j}} \widehat{u} \big\Vert \big\Vert \nabla \big( \widehat{\zeta_{j}} \widehat{u} \big) \big\Vert \leq C \delta \big\Vert \zeta_{j} u \big\Vert \big\Vert \nabla \big( \zeta_{j} u \big) \big\Vert \nonumber \\
&\leq  C \delta \Vert u \Vert \Vert \nabla u \Vert + C \delta \Vert u \Vert^{2}  . \label{b22}
\end{align}
Note that $\widehat{\zeta_{j}} \widehat{u} \in H^{1} ( \R^{d} )$ since the change of variables \eqref{d1} is Lipschitz continuous near $c$. Summing up, \eqref{b19} and \eqref{b22} show that we always have
\begin{equation} \label{b23}
\int_{\CM_{\lambda} \cap U_{j}} \vert u ( x ) \vert^{2} d  x \leq C \delta \Vert u \Vert \Vert \nabla u \Vert + C \delta \Vert u \Vert^{2} ,
\end{equation}
if $v_{c} ( \theta_{0} ) \neq 0$.

\underline{Case B.2}  We now consider the situation where $v_{c} ( \theta_{0} ) = 0$ and we work in a critical region $U_{j}$ as in Case 2.2 of Section \ref{s3} and Lemma \ref{d37}. We first estimate the contribution of $r \leq ( T + 1 ) \delta$ with $T \geq 1$ large enough. Let $\zeta_{j} \in C^{\infty}_{0} ( \R^{d} )$ be such that $\zeta_{j} = 1$ near $U_{j}$. Applying Lemma \ref{b18} and considering $T$ as being fixed, we directly obtain
\begin{align}
\int_{\CM_{\lambda} \cap U_{j} \cap \{ r \leq ( T + 1 ) \delta \}} \vert u ( x ) \vert^{2} d  x &\leq \int_{r \leq ( T + 1 ) \delta} \vert \zeta_{j} u ( x ) \vert^{2} d  x \leq C \delta \big\Vert \zeta_{j} u \big\Vert \big\Vert \nabla \big( \zeta_{j} u \big) \big\Vert   \nonumber \\
&\leq C \delta \Vert u \Vert \Vert \nabla u \Vert + C \delta \Vert u \Vert^{2} .  \label{b28}
\end{align}
It remains to estimate the contribution of $r > ( T + 1 ) \delta$. Since $\rho^{1 / \alpha_{c}} \leq \delta$ for $h$ small enough, we have $r > T \max ( \rho^{1 / \alpha_{c}} , \delta ) + \delta$ and such a point in $\CM_{\lambda}$ is at distance at most $\delta$ from a point $y \in M_{\lambda}$ with $\vert y \vert > T \max ( \rho^{1 / \alpha_{c}} , \delta )$. Then, we can apply \eqref{d31} which shows that such a point $x \in \CM_{\lambda}$ satisfies
\begin{equation} \label{d32}
\vert  \widetilde{x}_{2}  - \lambda \widetilde{x}_{1}^{1 - \alpha_{c}} \vert \leq C \delta + C \rho \widetilde{r}^{1 - \alpha_{c}} \leq C \delta ,
\end{equation}
where $\widetilde{x}$ are the coordinates of $x$ after the change of variables \eqref{d13}. Here, we used that $\rho \widetilde{r}^{1 - \alpha_{c}} \leq S \delta^{\alpha} ( T + 1 )^{1 - \alpha_{c}} \delta^{1 - \alpha_{c}} \leq \delta$ for $h$ small enough since $\alpha > \alpha_{c}$. Then, the Fubini theorem, the Cauchy--Schwarz inequality and Lemma \ref{b18} in the variable $\widetilde{x}_{2}$ give
\begin{align}
\int_{\CM_{\lambda} \cap U_{j} \cap \{ r > ( T + 1 ) \delta \}} \vert u ( x ) \vert^{2} d  x &= \int_{\CM_{\lambda} \cap U_{j} \cap \{ r > ( T + 1 ) \delta \}} \big\vert \zeta_{j} u ( x ) \big\vert^{2} d  x    \nonumber \\
&\leq C \int d \widetilde{x}_{1}  \, d \widetilde{x}_{3}  \cdots d \widetilde{x}_{d} \int_{\vert  \widetilde{x}_{2}  - \lambda \widetilde{x}_{1}^{1 - \alpha_{c}} \vert \leq C \delta}  \big\vert \widetilde{\zeta_{j}} \widetilde{u} ( \widetilde{x} ) \big\vert^{2} d  \widetilde{x}_{2}   \nonumber \\
&\leq C \delta \int \big\Vert \widetilde{\zeta_{j}} \widetilde{u} \big\Vert_{\widetilde{x}_{2}} \big\Vert \nabla \big( \widetilde{\zeta_{j}} \widetilde{u} \big) \big\Vert_{\widetilde{x}_{2}} d \widetilde{x}_{1}  \, d \widetilde{x}_{3}  \cdots d \widetilde{x}_{d}   \nonumber  \\
&\leq C \delta \big\Vert \widetilde{\zeta_{j}} \widetilde{u} \big\Vert \big\Vert \nabla \big( \widetilde{\zeta_{j}} \widetilde{u} \big) \big\Vert \leq C \delta \big\Vert \zeta_{j} u \big\Vert \big\Vert \nabla \big( \zeta_{j} u \big) \big\Vert    \nonumber  \\
&\leq  C \delta \Vert u \Vert \Vert \nabla u \Vert + C \delta \Vert u \Vert^{2}  . \label{b29}
\end{align}
Note that $\widetilde{\zeta_{j}} \widetilde{u} \in H^{1} ( \R^{d} )$ since the change of variables \eqref{d1} is Lipschitz continuous near $c$. Summing up,  \eqref{b28} and \eqref{b29} show that
\begin{equation} \label{b27}
\int_{\CM_{\lambda} \cap U_{j}} \vert u ( x ) \vert^{2} d  x \leq C \delta \Vert u \Vert \Vert \nabla u \Vert + C \delta \Vert u \Vert^{2} ,
\end{equation}
if $v_{c} ( \theta_{0} ) = 0$.

\underline{Case C} It remains to study the contribution to $\CI_{2}$ of a neighborhood of a critical point $c \in \CC_{\max} ( E )$. As before, we use the spherical variables $x = c + r \theta$ and work in the angular directions close to a finite number of angles $\theta_{0} \in \S^{d -1}$. In $X_{2}$, we have $r > R h^{\beta} = R s^{- 1} \delta$.

\underline{Case C.1} We assume that $v_{c} ( \theta_{0} ) \neq 0$ and we work in a critical region $U_{j}$ as in Case 2.1 of Section \ref{s3} and Lemma \ref{d37}. In the sequel we suppose that $v_{c} ( \theta_{0} ) > 0$, the other case can be treated in the same way. We first consider the situation $\lambda \leq R^{\alpha / 2} \delta^{\alpha}$. For $R$ large enough (depending on $S$), we have $\lambda + \rho \leq 2 R^{\alpha / 2} \delta^{\alpha}$. Then, \eqref{d11} shows that a point $c+ r \theta \in M_{\lambda}$ satisfies $r \leq C ( \lambda + \rho )_{+}^{1 / \alpha} \leq C R^{1 / 2} \delta$.  It implies that a point $c+ r \theta \in \CM_{\lambda}$ satisfies $r \leq ( C R^{1 / 2} + 1 ) \delta \leq R s^{- 1} \delta$ for $R$ large enough, where $\CM_{\lambda}$ is restricted to the points at distance at most $\delta$ from the part of $M_{\lambda}$ with $\theta$ near $\theta_{0}$. In other words, this region is not in $X_{2}$ and gives no contribution to $\CI_{2}$. Assume now that $\lambda > R^{\alpha / 2} \delta^{\alpha}$. For $R$ large enough, we have $\rho = S R^{- \alpha / 2} R^{\alpha / 2}\delta^{\alpha} < \lambda / 2$ and we can apply \eqref{d8} which shows that a point $x \in \CM_{\lambda}$ satisfies
\begin{equation*}
\vert \widehat{r}  - \lambda^{1 / \alpha} \vert \leq C \delta + C \rho \lambda^{\frac{1 - \alpha}{\alpha}} \leq C \delta ,
\end{equation*}
where $\widehat{x} = \widehat{r} \widehat{\theta}$ are the spherical coordinates of $x$ after the change of variables \eqref{d1}. Here, we used that $\rho \lambda^{\frac{1 - \alpha}{\alpha}} \leq S \delta^{\alpha} R^{\frac{1 - \alpha}{2}} \delta^{1 - \alpha} \leq \delta$ for $R$ large enough. Then, we are in a position similar to \eqref{d9}, and we can conclude as in \eqref{b22} that
\begin{equation}
\int_{X_{2} \cap U_{j}} \vert u ( x ) \vert^{2} d  x \leq C \delta \Vert u \Vert \Vert \nabla u \Vert + C \delta \Vert u \Vert^{2} ,
\end{equation}
if $v_{c} ( \theta_{0} ) \neq 0$.

\underline{Case C.2} The last situation to deal with is when $v_{c} ( \theta_{0} ) = 0$ (Case 2.2 of Section \ref{s3}). Since we work in $X_{2}$, we have $r \geq R \delta > T \max ( \rho^{1 / \alpha} , \delta ) + \delta = T S^{1 / \alpha} \delta + \delta $ for $R$ large enough. In particular, a point $x \in X_{2}$ is at distance at most $\delta$ from a point $y \in M_{\lambda}$ with $r_{y} \geq T \max ( \rho^{1 / \alpha} , \delta ) $. Thus, we can apply \eqref{d31} which shows that a point $x \in \CM_{\lambda}$ satisfies
\begin{equation}
\vert  \widetilde{x}_{2}  - \lambda \widetilde{x}_{1}^{1 - \alpha_{c}} \vert \leq  C \delta + C \rho \widetilde{r}^{1 - \alpha} \leq C \delta ,
\end{equation}
where $\widetilde{x}$ are the coordinates of $x$ after the change of variables \eqref{d13}. Here, we used that $r = \widetilde{r}$ and $\rho \widetilde{r}^{1 - \alpha} \leq S \delta^{\alpha} R^{1 - \alpha} \delta^{1 - \alpha} \leq \delta$ for $R$ large enough. Then, we are in a position similar to \eqref{d32}, and we can conclude as in \eqref{b27} that
\begin{equation}
\int_{X_{2} \cap U_{j}} \vert u ( x ) \vert^{2} d  x \leq C \delta \Vert u \Vert \Vert \nabla u \Vert + C \delta \Vert u \Vert^{2} ,
\end{equation}
if $v_{c} ( \theta_{0} ) = 0$.

\section{The case of monomial potential}

This section is devoted to the model operator $\CP = - \Delta + i \CV$ introduced in \eqref{a44}.

\subsection{Maximal accretivity} \label{s7}

We prove here Proposition \ref{a46}. A direct computation gives
\begin{equation*}
\re \< \CP u , u \> = \Vert \nabla u \Vert^{2} \geq 0 ,
\end{equation*}
for all $u \in C_{0}^{\infty} ( \R^{d} )$. Thus, $\CP$ defined on $C_{0}^{\infty} ( \R^{d} )$ is an accretive operator. Since $C_{0}^{\infty} ( \R^{d} )$ is dense in $L^{2} ( \R^{d} )$, Proposition 13.12 of \cite{He13_01} implies that $\CP$ is closable.

To show that its closed extension is maximal accretive, it is enough to show that $\ker ( \CP^{*} + a ) = \{ 0 \}$ for some $a > 0$ (see Theorem 13.14 of \cite{He13_01}). Then, suppose that $u \in \ker ( \CP^{*} + a )$. In particular, $u \in L^{2} ( \R^{d} )$ and $( \CP^{*} + a ) u = ( - \Delta - i \CV + a ) u = 0$ in the sense of distributions. The elliptic regularity implies that $u \in H^{2}_{\rm loc} ( \R^{d} )$. We will prove that $u = 0$ using a trick of spectral theory. We consider $\zeta \in C^{\infty}_{0} ( \R^{d} ; [ 0 , 1 ] )$ such that $\zeta = 1$ near $0$ and denote $\zeta_{k} ( x ) = \zeta ( x / k )$ for $k \in \N$. For $f \in H^{2}_{\rm loc} ( \R^{d} )$, a standard identity (see e.g. Equation (9.4.5) of \cite{He13_01}) gives
\begin{align}
0 ={}& \int \big( ( - \Delta + i \CV + a ) u \big) \zeta_{k}^{2} f \, d x = \int u \big( ( - \Delta + i \CV + a ) \zeta_{k}^{2} f \big) \, d x   \nonumber \\
={}& \int \nabla ( \zeta_{k} u ) \cdot \nabla ( \zeta_{k} f ) \, d x + \int \zeta_{k}^{2} ( i \CV + a ) u f \, d x   \nonumber \\
&- \int ( \nabla \zeta_{k} )^{2} u f \, d x + \sum_{1 \leq j \leq d} \int ( f \partial_{x_{j}} u - u \partial_{x_{j}} f ) \zeta_{k} \partial_{x_{j}} \zeta_{k} \, d x . \label{a87}
\end{align}
Applying this relation with $f = \overline{u}$ and taking the real part lead to
\begin{equation*}
0 = \Vert \nabla ( \zeta_{k} u ) \Vert^{2} + \int a \zeta_{k}^{2} \vert u \vert^{2} d x - \Vert ( \nabla \zeta_{k} ) u \Vert^{2} ,
\end{equation*}
and then
\begin{equation*}
a \Vert \zeta_{k}u \Vert^{2} \leq \Vert ( \nabla \zeta_{k} ) u \Vert^{2} .
\end{equation*}
Taking the limit $k \to + \infty$, we obtain
\begin{equation} \label{a89}
\Vert u \Vert^{2} = \lim_{k \to + \infty} \Vert \zeta_{k} u \Vert^{2} \leq \limsup_{k \to + \infty} \Vert ( \nabla \zeta_{k} ) u \Vert^{2} / a = 0 ,
\end{equation}
which shows that $u = 0$ and that (the closed extension of) $\CP$ is maximal accretive. In particular, the resolvent $( \CP  - z )^{- 1}$ is well defined for $\re z < 0$.

\begin{lemma}\sl \label {c1}
The operator $\CP$ has a compact resolvent
\end{lemma}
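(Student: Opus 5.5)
We prove compactness of $(\CP+1)^{-1}$ with the same Poincar\'e/level-set strategy used for Proposition \ref{a33}, now at a global (unbounded) scale. By Rellich, it suffices to show that the set $\{u\in D(\CP);\ \Vert u\Vert+\Vert(\CP+1)u\Vert\le 1\}$ is bounded in $H^{1}$ and has uniformly small mass outside large balls. The first property is immediate: $\re\<(\CP+1)u,u\>=\Vert\nabla u\Vert^{2}+\Vert u\Vert^{2}$ gives $\Vert\nabla u\Vert^{2}\le 1$. So the whole point is the tightness bound
\begin{equation*}
\int_{\vert x\vert\ge L}\vert u\vert^{2}dx\le \varepsilon(L), \qquad \varepsilon(L)\to 0 \text{ as } L\to+\infty ,
\end{equation*}
uniformly on this set. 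Given this, a bounded sequence has an $L^{2}_{\rm loc}$-convergent subsequence, and the tail bound turns this into $L^{2}$ convergence.

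For the tail bound we mimic the multiplier argument of Section \ref{s1} with $\lambda=0$. Let $\zeta_{L}$ be a cutoff supported in $\vert x\vert\ge L$. We use the Lipschitz multiplier $\chi=\phi(\sgn(\CV)\dist(x,M)/\delta)$, where $M=\{\vert\CV\vert\le\rho\}$ and $\delta=1$. Computing $\im\<\chi\zeta_{L}^{2}u,(\CP+1)u\>$ controls $\int\zeta_{L}^{2}\vert\CV\vert\,\vert u\vert^{2}$ off the $\delta$-neighbourhood $\CM$ of $M$. The error is $C(\Vert\nabla u\Vert+1)\Vert u\Vert$, and this is bounded. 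On $\{\vert x\vert\ge L\}\setminus\CM$ we have $\vert\CV\vert\ge\rho$, so this region contributes $O(\rho^{-1})$. Inside $\CM\cap\{\vert x\vert\ge L\}$ we use homogeneity. In directions where $v(\theta)\neq0$, $\CM$ is bounded, since $\vert\CV\vert\ge c r^{\alpha}$ there, so this part is empty for $L$ large. Near the zero set of $v$, \eqref{a82} and homogeneity give $\vert\nabla\CV\vert\gtrsim r^{\alpha-1}$. Consequently, in straightened coordinates (the exact analogue of Case 2.2, but now explicit and global because $\CV$ is exactly homogeneous), $\CM\cap\{r\sim 2^{k}\}$ is a slab of width $O(\rho\,2^{-k(\alpha-1)}+1)$ transverse to $\{\CV=0\}$.

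The key refinement is to make the width shrink at infinity. Choose $\rho$ depending on the dyadic shell, $\rho_{k}=2^{k(\alpha-1)/2}$, so that the width tends to zero like $2^{-k(\alpha-1)/2}$, plus the $\delta$ contribution. Take $\delta_{k}=2^{-k\eta}$ as well, with $\eta>0$ small. Lemma \ref{b18} applied transversally then gives a mass $\lesssim\delta_{k}\Vert u\Vert\Vert\nabla u\Vert$ on each shell piece. The multiplier with variable $\delta$ costs $\vert\nabla\chi\vert\lesssim\delta_{k}^{-1}$, and this has to be balanced against $\rho_{k}$ in the elliptic region. Here we use the bound $\int\vert\CV\vert\chi\vert u\vert^{2}\le \Vert(\CP+1)u\Vert\Vert u\Vert+\Vert\nabla\chi\, u\Vert\,\Vert\nabla u\Vert$, with $\vert\nabla\chi\vert\le 2^{k\eta}$ and the lower bound $\vert\CV\vert\ge 2^{k(\alpha-1)/2}$. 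The result is a shell mass of order $2^{k(\eta-(\alpha-1)/2)}$ times the local $L^{2}$ mass. Summing over $k\ge\log_{2}L$ yields $\varepsilon(L)\to0$.

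The main obstacle is precisely this scale-dependent choice of $(\rho,\delta)$: the errors must be summed over dyadic shells without losing the local-mass factors. If this becomes messy, the fallback is a scaling argument. By homogeneity, rescaling each shell $\{r\sim 2^{k}\}$ to unit size turns $\CP$ into $2^{-2k}(-\Delta)+i2^{k\alpha}\CV$, which is a semiclassical operator with $h=2^{-k(\alpha+2)/2}$ on an annulus without critical points. Remark \ref{a94}'s $h^{2/3}$ estimate, localized, then shows that the local mass is at most $Ch^{2/3}2^{2k}=C2^{-k(\alpha-1)\cdot\frac{2}{3}}\cdot(\text{norms})$, which again decays. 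Either route gives the tail bound and hence compactness of the resolvent.
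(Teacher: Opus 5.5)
Your proof is correct in substance, but it takes a more elaborate route than the paper. Both arguments share the same tools: the multiplier $\chi$ built from the distance to a level set of $\CV$, Lemma \ref{b18} applied transversally in the straightened coordinates of Case 2.2, and a Rellich/Kolmogorov--Riesz conclusion. They differ in how the layer around $\CV^{-1}(0)$ is made to contribute a vanishing tail.

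The paper keeps one global multiplier with $\rho=0$ and $\delta=1$, namely $\chi=\phi(\sgn(\CV)\dist(x,\CV^{-1}(0)))$. It uses the information you discard when you say the width-$1$ slab stalls the argument. Inside that layer, $\chi\CV\gtrsim\min(\widetilde{x}_{2}^{2},\vert\widetilde{x}_{2}\vert)\,\widetilde{x}_{1}^{\alpha-1}$, which still grows at infinity. Splitting at $\vert\widetilde{x}_{2}\vert=R^{(1-\alpha)/3}$, the outer part is controlled by this lower bound. Only the thin inner part needs Lemma \ref{b18}, and the tail is $O(R^{(1-\alpha)/3})$ with no dyadic decomposition.

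You instead use shell-dependent parameters $\rho_k=2^{k(\alpha-1)/2}$, $\delta_k=2^{-k\eta}$ with $0<\eta<(\alpha-1)/2$. This is essentially a rescaled copy of the proof of Proposition \ref{a33} on each shell. It does not need the profile of $\chi\CV$ in the transition region, but it costs more bookkeeping.

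One point must be stated correctly for your version to close. The inequality you quote, $\int\chi\vert\CV\vert\,\vert u\vert^{2}\le\Vert(\CP+1)u\Vert\Vert u\Vert+\Vert(\nabla\chi)u\Vert\Vert\nabla u\Vert$, is global. Since $\vert\nabla\chi\vert\sim2^{k\eta}$ is unbounded, the weighted norm $\Vert(\nabla\chi)u\Vert$ is not controlled a priori; bounding it would already require the tail decay you are proving.

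You should therefore use, on each shell, its own multiplier $\chi_k\zeta_k^{2}$. Here $\rho_k,\delta_k$ are constant and $\zeta_k$ localizes to $\{r\sim2^{k}\}$. Every term then involves only norms on a slightly enlarged shell. The elliptic part of shell $k$ has mass $\lesssim2^{k(\eta-(\alpha-1)/2)}$ times local norms of $u$, $\nabla u$ and $(\CP+1)u$. Cauchy--Schwarz and finite overlap let you sum over $k\ge\log_{2}L$.

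In your fallback the arithmetic is off. After rescaling, the relevant factor is $2^{-k\alpha}h^{-2/3}=2^{-2k(\alpha-1)/3}$, not $h^{2/3}2^{2k}$. That route also needs a localized form of the non-critical $h^{-2/3}$ bound on an annulus, which Remark \ref{a94} does not state as such.
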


\begin{proof}
We use some arguments of the proof of Proposition \ref{a33}. For $u \in C^{\infty}_{0} ( \R^{d} )$, we have
\begin{equation} \label{c2}
\Vert \nabla u \Vert^{2} = \re \< u , \CP u \> \leq \Vert \CP u \Vert \Vert u \Vert ,
\end{equation}
which shows that $D ( \CP ) \subset H^{1} ( \R^{d} )$ and that \eqref{c2} holds for all $u \in D ( \CP )$. Mimicking \eqref{a64}, we define the Lipschitz function
\begin{equation} \label{c3}
\chi ( x ) = \phi \big( \sgn ( \CV ( x ) ) \dist \big( x , \CV^{- 1} ( 0 ) \big) \big) ,
\end{equation}
which satisfies $\vert \chi ( x ) \vert \leq 1$ and $\vert \nabla \chi ( x ) \vert \leq 1$. As in \eqref{a14}, one can write
\begin{equation*}
\im \< \chi u , \CP u \> = \im \< ( \nabla \chi ) u , \nabla u \> + \< u , \chi \CV u \> ,
\end{equation*}
which yields
\begin{equation} \label{c4}
\< u , \chi \CV u \> \leq \Vert \CP u \Vert \Vert u \Vert + \Vert \CP u \Vert^{1 / 2} \Vert u \Vert^{3 / 2} .
\end{equation}

We claim that, for all $\varepsilon > 0$, there exists $R \geq 1$ large enough such that
\begin{equation} \label{c5}
\int_{\vert x \vert \geq R} \vert u ( x )  \vert^{2} d x \leq \varepsilon ,
\end{equation}
for all $u \in D ( \CP )$ with $\Vert \CP u \Vert + \Vert u \Vert \leq 1$. To prove that, as in Section \ref{s3}, we use the spherical coordinates $x = r \theta$ and work near $\theta_{0} \in \S^{d - 1}$. If $v ( \theta_{0} ) \neq 0$, then $\chi \CV ( r \theta ) = \vert \CV ( r \theta ) \vert \geq r^{\alpha} / C$ for $\theta$ near $\theta_{0}$. Then \eqref{c4} gives
\begin{equation} \label{c6}
\int_{r \geq R \text{ and } \theta \text{ near } \theta_{0}} \vert u ( x )  \vert^{2} d x \leq C R^{- \alpha} \< u , \chi \CV u \> \leq C R^{- \alpha} .
\end{equation}
If now $v ( \theta_{0} ) = 0$, we use the changes of variables of Case 2.2 of Section \ref{s3}. Since $\xi ( r , \theta ) = 0$ in the present setting, the variables $\widetilde{x}$ constructed in \eqref{d13} provide a global diffeomorphism on $\R^{d} \setminus \{ 0 \}$. Thus, for $r \geq R$ and $\theta$ near $\theta_{0}$, we have $\CV ( x ) = \widetilde{x}_{2} \widetilde{x}_{1}^{\alpha - 1}$, $\CV^{- 1} ( 0 ) = \{ \widetilde{x}_{2} = 0 \}$, $\dist ( x , \CV^{- 1} ( 0 ) ) = \vert \widetilde{x}_{2} \vert$ and $\vert ( \widetilde{x}_{2} , \ldots , \widetilde{x}_{d} ) \vert \leq \widetilde{x}_{1} / C$. Then, $\chi \CV ( x ) \geq \min ( \widetilde{x}_{2}^{2} , \vert \widetilde{x}_{2} \vert ) \widetilde{x}_{1}^{\alpha -1}$. In particular, $\chi \CV ( x) \geq R^{( \alpha - 1) / 3} / C$ for $\vert \widetilde{x}_{2} \vert \geq R^{( 1 -\alpha ) / 3}$ and $\vert \widetilde{x} \vert = r \geq R$. Then, \eqref{c4} implies
\begin{equation} \label{c8}
\int_{r \geq R , \ \theta \text{ near } \theta_{0} \text{ and } \vert \widetilde{x}_{2} \vert \geq R^{( 1 -\alpha ) / 3}} \vert u ( x )  \vert^{2} d x \leq C R^{( 1 - \alpha ) / 3} \< u , \chi \CV u \> \leq C R^{( 1 - \alpha ) / 3} .
\end{equation}
Finally, Lemma \ref{b18}, the Fubini theorem and the properties of the change of variables $x \longmapsto \widetilde{x}$ give
\begin{align}
\int_{r \geq R , \ \theta \text{ near } \theta_{0} \text{ and } \vert \widetilde{x}_{2} \vert \leq R^{( 1 -\alpha ) / 3}} &\leq \int_{\widetilde{x}_{1} \geq R / C} d \widetilde{x}_{1}  \, d \widetilde{x}_{3}  \cdots d \widetilde{x}_{d} \int_{\vert \widetilde{x}_{2} \vert \leq R^{( 1 -\alpha ) / 3}} \vert \widetilde{u} ( \widetilde{x} )  \vert^{2} d\widetilde{x}_{2}  \nonumber \\
&\leq C R^{( 1 -\alpha ) / 3} \int_{\widetilde{x}_{1} \geq R / C} \Vert \widetilde{u} \Vert_{\widetilde{x}_{2}} \Vert \nabla \widetilde{u} \Vert_{\widetilde{x}_{2}}  d \widetilde{x}_{1}  \, d \widetilde{x}_{3}  \cdots d \widetilde{x}_{d}  \nonumber \\
&\leq C R^{( 1 -\alpha ) / 3} .  \label{c9}
\end{align}
Summing up, \eqref{c5} is a consequence of \eqref{c6}, \eqref{c8} and \eqref{c9}.

Using that $D ( \CP ) \subset H^{1} ( \R^{d} )$ and \eqref{c5}, the compacity of the resolvent of $\CP$ follows from the theorem of Kolmogorov--M. Riesz--Fr\'echet (see Corollary 4.27 and Proposition 9.3 of \cite{Br11_01}).
\end{proof}

It remains to show that $\sigma ( \CP ) \subset \{ z \in \C ; \ \re z > 0 \}$. The accretivity of $\CP$ yields $\sigma ( \CP ) \subset \{ z \in \C ; \ \re z \geq 0 \}$. Assume that $\CP$ has an eigenvalue $z \in i \R$ associated to an eigenvector $u \in D ( \CP )$. An integration by parts gives
\begin{equation*}
0 = \re \< ( \CP - z ) u , u \> = \Vert \nabla u \Vert^{2} ,
\end{equation*}
showing that $u = 0$. Thus, $\sigma ( \CP ) \subset \{ z \in \C ; \ \re z > 0 \}$ and Proposition \ref{a46} follows.

\subsection{Operators with signed potential}

In order to prove Proposition \ref{a65}, we need to study a slightly larger class of operators. Given $\rho \in \C^{*}$, we introduce the operator 
\begin{equation} \label{a50}
\CP^{\rho} = - \Delta + \rho \CV ( x ) ,
\end{equation}
acting on $L^{2} ( \R^{d} )$ with domain $D ( \CP^{\rho} ) = \{ u \in H^{2} ( \R^{d} ) ; \ \vert x \vert^{\alpha} u \in L^{2} ( \R^{d} ) \}$. In particular, $\CP = \CP^{i}$.

\begin{lemma}\sl \label{a69}
If $\CV$ has sign $\pm$ and $\rho \in \C \setminus \R^{\mp}$, the operator $\CP^{\rho}$ with domain $C_{0}^{\infty} ( \R^{d} )$ is closable. Its closed extension, still denoted $\CP^{\rho}$, has domain $D ( \CP^{\rho} ) = \{ u \in H^{2} ( \R^{d} ) ; \ \vert x \vert^{\alpha} u \in L^{2} ( \R^{d} ) \}$. If moreover $\pm \re \rho \geq 0$, then $\CP^{\rho}$ is maximal accretive.
\end{lemma}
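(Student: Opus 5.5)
The plan is to reduce everything to one key a priori estimate: for $u \in C_{0}^{\infty} ( \R^{d} )$,
\begin{equation*}
\Vert \Delta u \Vert + \Vert \CV u \Vert \leq C \big( \Vert \CP^{\rho} u \Vert + \Vert u \Vert \big) ,
\end{equation*}
where we may take $\CV \geq 0$ (the sign $-$ case follows by replacing $( \CV , \rho )$ with $( - \CV , - \rho )$), and where $\rho \notin \R^{-}$. Since $\vert \CV \vert \asymp \vert x \vert^{\alpha}$ when $\CV$ has a sign, closability and the identification of the domain follow from this estimate. Indeed, $\CP^{\rho}$ maps $C_{0}^{\infty}$ into $L^{2}$. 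The adjoint $( \CP^{\rho} )^{*} \supset - \Delta + \overline{\rho} \CV$ on $C_{0}^{\infty}$ is densely defined, so $\CP^{\rho}$ is closable. For the domain: if $u_{n} \to u$ and $\CP^{\rho} u_{n}$ is Cauchy, the estimate makes $\Delta u_{n}$ and $\CV u_{n}$ Cauchy, so $u \in H^{2}$ and $\vert x \vert^{\alpha} u \in L^{2}$. Conversely, every $u$ in $\{ u \in H^{2} ; \vert x \vert^{\alpha} u \in L^{2} \}$ is a limit, in the graph norm of $\Delta$ and $\vert x \vert^{\alpha}$, of $C_{0}^{\infty}$ functions. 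We get this by cutting off with $\zeta_{k}$ and mollifying, using the bounds $\vert \nabla \zeta_{k} \vert \lesssim k^{-1}$, $\vert \Delta \zeta_{k} \vert \lesssim k^{- 2}$ and the continuity of $\CV$.

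To prove the estimate, I expand
\begin{equation*}
\Vert \CP^{\rho} u \Vert^{2} = \Vert \Delta u \Vert^{2} + \vert \rho \vert^{2} \Vert \CV u \Vert^{2} - 2 \re \big( \rho \< - \Delta u , \CV u \> \big)
\end{equation*}
and integrate by parts:
\begin{equation*}
\< - \Delta u , \CV u \> = \int \CV \vert \nabla u \vert^{2} + \int \overline{u} \, \nabla \CV \cdot \nabla u .
\end{equation*}
The first term is $\geq 0$. Write $A = \int \CV \vert \nabla u \vert^{2} \geq 0$ and $B = \int \overline{u} \, \nabla \CV \cdot \nabla u$. Only $\re B = - \frac{1}{2} \int \Delta \CV \vert u \vert^{2}$ would be natural, but $\CV$ is merely $C^{1}$, so I will keep $B$ and bound it instead. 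Homogeneity gives $\vert \nabla \CV \vert \leq C \vert x \vert^{\alpha - 1} \leq C ( 1 + \CV )^{( \alpha - 1 ) / \alpha}$. Combined with Cauchy--Schwarz and interpolation, this yields
\begin{equation*}
\vert B \vert \leq \eta \Vert \CV u \Vert^{2} + \eta \Vert \Delta u \Vert^{2} + C_{\eta} \Vert u \Vert^{2} .
\end{equation*}
Here I use $\Vert \vert x \vert^{\alpha - 1} \nabla u \Vert^{2} \lesssim \Vert \vert x \vert^{\alpha} u \Vert \Vert \vert x \vert^{\alpha - 2} \nabla u \Vert + \ldots$, or more simply the bound $\Vert \vert x \vert^{\alpha - 1} \nabla u \Vert^{2} \leq C \Vert \CV u \Vert \Vert \Delta u \Vert + C \Vert u \Vert^{2}$, obtained by one more integration by parts with the $C^{1}$ weight $\vert x \vert^{2 \alpha - 2}$ (which is $C^{1}$ since $\alpha > 1$). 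For the $A$ term, write $\rho = \vert \rho \vert e^{i \omega}$ with $\vert \omega \vert < \pi$. If $\re \rho \leq 0$, then $- 2 \re ( \rho A ) \geq 0$ and no control is needed. In general, I will handle it by rotating: it suffices to bound
\begin{equation*}
\Vert e^{- i \omega / 2} \CP^{\rho} u \Vert^{2} = \Vert e^{- i \omega / 2} ( - \Delta ) u + \vert \rho \vert e^{i \omega / 2} \CV u \Vert^{2} .
\end{equation*}
The cross term is now $2 \vert \rho \vert \re \big( e^{i \omega} \< - \Delta u , \CV u \> \big)$. Combining this with the identity $\re \< \CP^{\rho} u , \CV u \>$, I will control $\Vert \CV u \Vert^{2}$ via the imaginary and real parts of $\< \CP^{\rho} u , \CV u \>$, splitting according to whether $\vert \omega \vert \leq \pi / 2$ or not. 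The standard sectorial argument runs as follows. First,
\begin{equation*}
\< \CP^{\rho} u , \CV u \> = A + B + \rho \Vert \CV u \Vert^{2} .
\end{equation*}
Since $A \geq 0$ and $\rho$ lies off $\R^{-}$, there is a unit $e^{i \tau}$ with $\re ( e^{i \tau} \rho ) \geq c > 0$ and $\re e^{i \tau} \geq 0$. Taking $\re ( e^{i \tau} \cdot )$ gives
\begin{equation*}
c \Vert \CV u \Vert^{2} \leq \Vert \CP^{\rho} u \Vert \Vert \CV u \Vert + \vert B \vert ,
\end{equation*}
and absorbing the $\eta$-terms gives the bound on $\Vert \CV u \Vert$. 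Then $\Vert \Delta u \Vert \leq \Vert \CP^{\rho} u \Vert + \vert \rho \vert \Vert \CV u \Vert$. The main obstacle is the bound on $B$ with only $C^{1}$ regularity, because absorbing it needs the $\Vert \Delta u \Vert$ term. I will handle this by estimating $\Vert \Delta u \Vert$ via the triangle inequality first, so that $\eta \Vert \Delta u \Vert^{2} \leq 2 \eta ( \Vert \CP^{\rho} u \Vert^{2} + \vert \rho \vert^{2} \Vert \CV u \Vert^{2} )$, and then choosing $\eta$ small.

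Finally, for maximal accretivity when $\re \rho \geq 0$: accretivity is immediate, since
\begin{equation*}
\re \< \CP^{\rho} u , u \> = \Vert \nabla u \Vert^{2} + \re \rho \int \CV \vert u \vert^{2} \geq 0 .
\end{equation*}
Maximality then follows exactly as in Section \ref{s7}. I show $\ker ( ( \CP^{\rho} )^{*} + a ) = \{ 0 \}$ using identity \eqref{a87} with $i \CV$ replaced by $\overline{\rho} \CV$ and $f = \overline{u}$. Taking the real part, the potential term $\re \overline{\rho} \int \zeta_{k}^{2} \CV \vert u \vert^{2}$ is nonnegative, so $a \Vert \zeta_{k} u \Vert^{2} \leq \Vert ( \nabla \zeta_{k} ) u \Vert^{2} \to 0$. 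Hence $u = 0$, and Theorem 13.14 of \cite{He13_01} concludes.
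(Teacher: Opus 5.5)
Your proof is correct in its final form, but you reach the key a priori estimate by a different mechanism than the paper.

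\textbf{Comparison with the paper.} The paper expands $\Vert \CP^{\rho} u \Vert^{2} = \Vert \Delta u \Vert^{2} + \vert \rho \vert^{2} \Vert \CV u \Vert^{2} - 2 \re ( \rho \langle \CV u , \Delta u \rangle )$. It integrates by parts once to turn the imaginary part of the cross term into the lower-order quantity $\langle ( \nabla \CV ) u , \nabla u \rangle$. It then treats the real part by cases. For $\rho \notin \R$ it uses a weighted Cauchy--Schwarz inequality based on $\vert \re \rho \vert < \vert \rho \vert$. For $\rho \in \R^{\pm , *}$ it integrates by parts a second time and uses the positivity of $\rho \int \CV \vert \nabla u \vert^{2}$.

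You instead pair $\CP^{\rho} u$ with $\CV u$ and multiply the identity $\langle \CP^{\rho} u , \CV u \rangle = A + B + \rho \Vert \CV u \Vert^{2}$ by $e^{- i \omega / 2}$. This keeps $\re ( e^{- i \omega / 2} A ) \geq 0$ and rotates $\rho$ into the open right half-plane. It bounds $\Vert \CV u \Vert$ for all $\rho \in \C \setminus \R^{-}$ at once, and $\Vert \Delta u \Vert$ then follows from the triangle inequality.

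What your sectorial argument buys is the absence of a case split and a transparent dependence on $\arg \rho$ (the constant is $\vert \rho \vert \cos ( \omega / 2 )$). The paper's expansion instead controls $\Vert \Delta u \Vert$ and $\Vert \CV u \Vert$ simultaneously. Both rest on the same lower-order bound \eqref{a71}.

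Your closability argument (the adjoint is densely defined) and your explicit density of $C_{0}^{\infty}$ in $\{ u \in H^{2} ; \ \vert x \vert^{\alpha} u \in L^{2} \}$ spell out what the paper leaves implicit after \eqref{a70} and \eqref{a88}. Your maximal accretivity argument is the paper's.

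\textbf{Errors in the exploratory asides.} You should delete these, since several are wrong.
\begin{itemize}
\item The cross term in your expansion has the wrong sign. In your convention, $\Vert \CP^{\rho} u \Vert^{2} = \Vert \Delta u \Vert^{2} + \vert \rho \vert^{2} \Vert \CV u \Vert^{2} + 2 \re ( \overline{\rho} \langle - \Delta u , \CV u \rangle )$. So $2 ( \re \rho ) A$ is favorable when $\re \rho \geq 0$, not when $\re \rho \leq 0$.
\item Multiplying $\CP^{\rho} u$ by a unimodular constant leaves its norm unchanged, so that ``rotation'' of $\Vert \CP^{\rho} u \Vert^{2}$ does nothing.
\item More importantly, the inequality $\Vert \vert x \vert^{\alpha - 1} \nabla u \Vert^{2} \leq C \Vert \CV u \Vert \Vert \Delta u \Vert + C \Vert u \Vert^{2}$ that you invoke for $B$ is false unless $\alpha = 2$. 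Replacing $u$ by $u ( x / \lambda )$ multiplies the left side by $\lambda^{2 \alpha - 4 + d}$ and the right-hand terms by $\lambda^{\alpha - 2 + d}$ and $\lambda^{d}$. This gives a contradiction as $\lambda \to + \infty$ if $\alpha > 2$, and as $\lambda \to 0$ if $\alpha < 2$. Also, $\vert x \vert^{2 \alpha - 2}$ is not $C^{1}$ for $\alpha \leq 3 / 2$.
\end{itemize}

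No weighted gradient bound is needed: put the weight on $u$ rather than on $\nabla u$. That is, $\vert B \vert \leq \Vert ( \nabla \CV ) u \Vert \Vert \nabla u \Vert$, with $\Vert ( \nabla \CV ) u \Vert^{2} \leq C \Vert \vert x \vert^{\alpha - 1} u \Vert^{2} \leq \eta \Vert \CV u \Vert^{2} + C_{\eta} \Vert u \Vert^{2}$ and $\Vert \nabla u \Vert^{2} \leq \Vert \Delta u \Vert \Vert u \Vert$, exactly as in \eqref{a71}. With this one-line justification of the bound on $B$, your absorption step closes as written.
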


\begin{proof}
We start with the closedness of the operator for which we follow the arguments of \cite{He13_01}, Section 14.2. We first show that for $\rho \in \C \setminus \R^{\mp}$ there exists $C > 0$ such that, for all $u \in C_{0}^{\infty} ( \R^{d} )$, one has
\begin{equation} \label{a70}
\Vert \Delta u \Vert^{2} + \Vert \vert x \vert^{\alpha} u \Vert^{2} \leq C ( \Vert \CP^{\rho} u \Vert^{2} + \Vert u \Vert^{2} ) .
\end{equation}
A direct computation gives
\begin{equation*}
\Vert \CP^{\rho} u \Vert^{2} = \Vert \Delta u \Vert^{2} + \vert \rho \vert^{2} \Vert \CV u \Vert^{2} - 2 \re \big( \rho \< \CV u , \Delta u \> \big) .
\end{equation*}
Denoting $\CF_{\rho} ( u ) = \Vert \Delta u \Vert^{2} + \vert \rho \vert^{2} \Vert \CV u \Vert^{2}$, it writes
\begin{equation*}
\Vert \CP^{\rho} u \Vert^{2} = \CF_{\rho} ( u ) - 2 ( \re \rho ) \re \< \CV u , \Delta u \> + 2 ( \im \rho ) \im \< \CV u , \Delta u \> .
\end{equation*}
Integrating by parts in the last term and taking advantage of the imaginary part, we get
\begin{equation*}
\Vert \CP^{\rho} u \Vert^{2} = \CF_{\rho} ( u ) - 2 ( \re \rho ) \re \< \CV u , \Delta u \> - 2 ( \im \rho ) \im \< ( \nabla \CV ) u , \nabla u \> .
\end{equation*}
Using that $\nabla \CV$ is homogeneous of order $\alpha - 1$, we have $\vert \nabla \CV ( x ) \vert \lesssim \vert x \vert^{\alpha - 1}$. On the other hand, $\vert \CV ( x ) \vert \gtrsim \vert x \vert^{\alpha}$ since $\CV$ has a sign. Then, given any parameter $\delta > 0$, there exists a constant $C = C_{\delta , \rho} > 0$ such that
\begin{align}
\vert \< ( \nabla \CV ) u , \nabla u \> \vert &\leq \Vert ( \nabla \CV ) u \Vert^{2} + \Vert \nabla u \Vert^{2} \leq \delta \vert \rho \vert^{2} \Vert \CV u \Vert^{2} + \delta \Vert \Delta u \Vert^{2} + C \Vert u \Vert^{2}   \nonumber  \\
&\leq \delta \CF_{\rho} ( u ) + C \Vert u \Vert^{2} ,  \label{a71}
\end{align}
since $\rho \neq 0$. Applying this estimate, we have for all $\nu > 0$
\begin{equation} \label{a72}
\Vert \CP^{\rho} u \Vert^{2} + C \Vert u \Vert^{2} \geq ( 1 - \nu ) \CF_{\rho} ( u ) - 2 ( \re \rho ) \re \< \CV u , \Delta u \> ,
\end{equation}
where $C$ depends on $\rho$ and $\nu$. Suppose first that $\rho \notin \R$. Using $2 a b \leq a^{2} + b^{2}$, \eqref{a72} yields
\begin{align*}
\Vert \CP^{\rho} u \Vert^{2} + C \Vert u \Vert^{2} &\geq ( 1 - \nu ) \CF_{\rho} ( u ) - 2 \sqrt{\frac{\vert \re \rho \vert}{\vert \rho \vert}} \vert \rho \vert \Vert \CV u \Vert \sqrt{\frac{\vert \re \rho \vert}{\vert \rho \vert}} \vert \Vert \Delta u \Vert  \\
&\geq \Big( 1 - \nu - \frac{\vert \re \rho \vert}{\vert \rho \vert} \Big) \CF_{\rho} ( u ) .
\end{align*}
For $\rho \notin \R$, we have $\vert \re \rho \vert < \vert \rho \vert$. Then, taking $\nu$ small enough, we get \eqref{a70} in that situation. It remains to consider the case $\rho \in \R^{\pm , *}$. An integration by parts in \eqref{a72} gives
\begin{align*}
\Vert \CP^{\rho} u \Vert^{2} + C \Vert u \Vert^{2} &\geq ( 1 - \nu ) \CF_{\rho} ( u ) + 2 \rho \Big( \int \CV \vert \nabla u \vert^{2} d x + \re \< ( \nabla \CV ) u , \nabla u \> \Big)   \\
&\geq ( 1 - \nu ) \CF_{\rho} ( u ) + 2 \rho \re \< ( \nabla \CV ) u , \nabla u \> ,
\end{align*}
since $\rho \CV \geq 0$. Using again \eqref{a71}, this implies $\Vert \CP^{\rho} u \Vert^{2} + C \Vert u \Vert^{2}\geq ( 1 - 2 \nu ) \CF_{\rho} ( u )$ for some new constant $C > 0$, which finishes the proof of \eqref{a70}.

On the other hand, we have
\begin{equation} \label{a88}
\Vert \CP^{\rho} u \Vert \leq \Vert \Delta u \Vert^{2} + C \Vert \vert x \vert^{\alpha} u \Vert^{2} ,
\end{equation}
since $v$ is bounded. Combining \eqref{a70} and \eqref{a88}, we deduce that $\CP^{\rho}$ is closable and that its closed extension has domain $D ( \CP^{\rho} ) = \{ u \in H^{2} ( \R^{d} ) ; \ \vert x \vert^{\alpha} u \in L^{2} ( \R^{d} ) \}$. It remains to show the maximal accretivity. Since 
\begin{equation*}
\re \< \CP^{\rho} u , u \> = \Vert \partial_{x} u \Vert^{2} + ( \re \rho ) \int \CV ( x ) \vert u(x) \vert^{2} d x ,
\end{equation*}
the accretivity of $\CP^{\rho}$ is immediate if $\pm \re \rho \geq 0$. To prove the maximal accretivity, it is enough to adapt the proof of Proposition \ref{a46}. Equation \eqref{a88} still hold with $i \CV$ replaced by $\rho \CV$. Using $\re ( \rho \CV ) \geq 0$, we can conclude as in \eqref{a89}. This finishes the proof of the lemma.
\end{proof}

To prove Proposition \ref{a65} it remains to compute the spectrum of $\CP$. For that, we follow the method of complex dilations and we use ideas coming from the theory of resonances (see \cite{AgCo71_01}). The complex dilation is defined by
\begin{equation*}
( \CU_{\theta} f ) ( x ) = e^{i \theta / 2} f ( e^{i \theta} x ) .
\end{equation*}
The operator $\CU_{\theta}$ is well defined, is unitary on $L^{2}( \R^{d} )$ for $\theta \in i \R$, and requires the analyticity of $f$ to make sense for $\theta \notin i \R$. Formally, $\CU_{\theta}^{- 1} = \CU_{- \theta}$ and $\CU_{\theta}^{*} = \CU_{\overline{\theta}}$. Since $\CU_{\theta} \CV \CU_{\theta}^{- 1} = e^{i \alpha \theta} \CV$ for $\theta \in i \R$, the dilated operator is defined as
\begin{equation*}
\SQ_{\theta} = - e^{- 2 i \theta} \Delta \pm e^{i \alpha \theta} \CV = e^{- 2 i \theta} ( - \Delta \pm e^{ i ( \alpha + 2) \theta} \CV ) ,
\end{equation*}
which satisfies formally $\SQ_{\theta} = \CU_{\theta} \CQ  \CU_{\theta}^{- 1}$. Note that
\begin{equation} \label{b85}
\SQ_{0} = \CQ \qquad \text{and} \qquad \SQ_{\theta_{0}} = e^{\mp i \frac{\pi}{\alpha + 2}} \CP \quad \text{with} \quad \theta_{0} = \pm \frac{\pi}{2 ( \alpha + 2 )} .
\end{equation}
Thus, we will work for $\theta$ near $[ 0 , \theta_{0} ]$. Thanks to Lemma \ref{a69}, the operator $\SQ_{\theta}$ is closed on $D ( \CP )$ for such $\theta$. Since moreover the coefficients of $\SQ_{\theta}$ are holomorphic in $\theta$, $\SQ_{\theta}$ is a holomorphic family of type (A) of Kato (see Section VII.2 of \cite{Ka76_01}).

Since $\CP$ is maximal accretive, $- 1 \notin \sigma ( \SQ_{\theta_{0}} )$. By the perturbation theory (see Theorem VII.1.3 of \cite{Ka76_01}), there exists $\varepsilon > 0$ such that $\sigma ( \SQ_{\theta} ) \cap B ( - 1 , \varepsilon ) = \emptyset$ for all $\theta \in B ( \theta_{0} , \varepsilon )$. On the other hand, $\re e^{ i ( \alpha + 2) \theta} > 0$ for all $\theta \in [ 0 , \theta_{0}[$. Using again Lemma \ref{a69} and combining with the previous argument, there exists (a new) $\varepsilon > 0$ such that
\begin{equation}
\sigma ( \SQ_{\theta} ) \cap B ( - 1 , \varepsilon ) = \emptyset \text{ for all } \theta \in \Theta = [ 0 , \theta_{0} ] + B ( 0 , \varepsilon ) .
\end{equation}
In particular, the resolvent set of $\SQ_{\theta}$ is not empty and $\SQ_{\theta}$ has compact resolvent for $\theta \in \Theta$.

For $z \notin \sigma ( \SQ_{\theta} )$ and two functions $u , v$ such that $\theta \mapsto \CU_{\theta} u$ (resp. $\theta \mapsto \CU_{- \overline{\theta}} v$) is an holomorphic (resp. antiholomorphic) function on $\Theta$ with values in $L^{2} ( \R )$, we define
\begin{equation*}
R_{\theta} ( z ) = \< ( \SQ_{\theta} - z )^{- 1} \CU_{\theta} u , \CU_{- \overline{\theta}} v \> .
\end{equation*}
We first fix $z \in B ( - 1 , \varepsilon )$. By the perturbation theory, $\theta \mapsto R_{\theta} ( z )$ is holomorphic on $\Theta$. For $\theta \in \Theta \cap i \R$, the operator $\CU_{\theta}$ is just a scaling operator on $L^{2}( \R )$ and a direct and licit computation gives
\begin{equation*}
R_{\theta} ( z ) = \< ( \CU_{\theta} \CQ \CU_{\theta}^{- 1} - z )^{- 1} \CU_{\theta} u , \CU_{- \overline{\theta}} v \> = \< \CU_{\theta} ( \CQ - z )^{- 1} \CU_{\theta}^{- 1} \CU_{\theta} u , \CU_{- \overline{\theta}} v \> = \< ( \CQ - z )^{- 1} u , v \> .
\end{equation*}
Since the holomorphic function $\theta \mapsto R_{\theta} ( z )$  is constant on $\Theta \cap i \R = i [ - \varepsilon , \varepsilon ]$, it is constant on the whole $\Theta$. We have just proven that
\begin{equation*}
R_{0} ( z ) = R_{\theta_{0}} ( z ) ,
\end{equation*}
for all $z \in B ( - 1 , \varepsilon )$. Since $\SQ_{0}$ and $\SQ_{\theta_{0}}$ have compact resolvent, the functions $z \mapsto R_{0} ( z )$ and $z \mapsto R_{\theta_{0}} ( z )$ are meromorphic on $\C$. Since they coincide in $B ( - 1 , \varepsilon )$, they coincide everywhere. In other words,
\begin{equation} \label{b86}
\< ( \SQ_{0} - z )^{- 1} u ,v \> = \< ( \SQ_{\theta_{0}} - z )^{- 1} \CU_{\theta_{0}} u , \CU_{- \overline{\theta_{0}}} v \> ,
\end{equation}
for all $z \in \C \setminus ( \sigma ( \SQ_{0} ) \cup \sigma ( \SQ_{\theta_{0}} ) )$.

Let $z_{0}$ be an eigenvalue of $\SQ_{0}$. Integrating \eqref{b86} on a small circle around $z_{0}$, we get
\begin{equation} \label{b87}
\< \Pi_{0} u ,v \> = \< \Pi_{\theta_{0}} \CU_{\theta_{0}} u , \CU_{- \overline{\theta_{0}}} v \> ,
\end{equation}
where $\Pi_{0}$ (resp. $\Pi_{\theta_{0}}$) is the generalized spectral projector of $\SQ_{0}$  (resp. $\SQ_{\theta_{0}}$) associated to $z_{0}$ (with $\Pi_{\theta_{0}} = 0$ if $z_{0}$ is not an eigenvalue of $\SQ_{\theta_{0}}$). Let $( \widetilde{u}_{j} )_{j = 1 , \ldots , \rank \Pi_{0}}$ and $( \widetilde{v}_{k} )_{k = 1 , \ldots , \rank \Pi_{0}}$ in $L^{2}( \R^{d} )$ be such that the matrix $( \< \Pi_{0} \widetilde{u}_{j} , \widetilde{v}_{k} \> )_{j , k = 1 , \ldots , \rank \Pi_{0}}$ is invertible. Since it contains the eigenvectors of the harmonic oscillator, the space $\C [ x_{1} , \ldots , x_{d}  ] e^{- x^{2}}$ is dense in $L^{2}( \R^{d} )$. Approximating $\widetilde{u}_{j}$ and $\widetilde{v}_{j}$, one can find $( u_{j} )_{j = 1 , \ldots , \rank \Pi_{0}}$ and $( u_{k} )_{k = 1 , \ldots , \rank \Pi_{0}}$ in $\C [ x_{1} , \ldots , x_{d} ] e^{- x^{2}}$ such that the matrix $( \< \Pi_{0} u_{j} , v_{k} \> )_{j , k = 1 , \ldots , \rank \Pi_{0}}$ is invertible. Since $\vert \theta_{0} \vert \leq \pi / 6$, $\theta \mapsto \CU_{\theta} u_{j}$ (resp. $\theta \mapsto \CU_{- \overline{\theta}} v_{k}$) is an holomorphic (resp. antiholomorphic) function on $\Theta$ with values in $L^{2} ( \R )$ or $L^{2} ( \R^{\pm} )$. Then, \eqref{b87} implies that $( \< \Pi_{\theta_{0}} \CU_{\theta_{0}} u_{j} , \CU_{- \overline{\theta_{0}}} v_{k} \> )_{j , k = 1 , \ldots , \rank \Pi_{0}}$ is invertible. This shows that $z_{0}$ is an eigenvalue of $\SQ_{\theta_{0}}$ and that
\begin{equation*}
\rank \Pi_{0} \leq \rank \Pi_{\theta_{0}} .
\end{equation*}
Since this argument can be performed in the other direction, we conclude that $\sigma ( \SQ_{0} ) = \sigma ( \SQ_{\theta_{0}} )$ with the same multiplicity. Taking into account \eqref{b85}, this shows Proposition \ref{a65}.

\subsection{Resolvent estimates} \label{s8}

The proof of Proposition \ref{a47} is similar to those of Proposition \ref{a33} and of Lemma \ref{c1} and we use again the ideas of \cite{CoGa23_01}. As in \eqref{a63} and \eqref{a59}, we set for $\lambda \geq 1$ large
\begin{equation*}
M_{\lambda} = \{ x \in \R^{d} ; \ \vert \CV ( x ) - \lambda \vert \leq \rho \}  \qquad \text{ and } \qquad 
\CM_{\lambda} = \{ x \in \R^{d} ; \ \dist ( x , M_{\lambda} ) \leq \delta \} ,
\end{equation*}
but now with $\rho \geq 1$ large and $\delta > 0$ small. The case $\lambda \leq  - 1$ large can be treated the same way. Note that since $\vert \CV ( x ) \vert \leq C \vert x \vert^{\alpha}$, we have $\vert x \vert \geq \lambda^{1 / \alpha} / C$ for $x \in \CM_{\lambda}$ where $C > 0$ is independent of $\rho , \delta , \lambda$ for $\rho , \delta$ fixed and $\lambda$ large enough. As in \eqref{a64}, we define the Lipschitz function
\begin{equation}
\chi ( x ) = \phi \Big( \frac{1}{\delta} \sgn \big( \CV ( x ) - \lambda \big) \dist \big( x , M_{\lambda} \big) \Big) ,
\end{equation}
which satisfies $\vert \chi ( x ) \vert \leq 1$ and $\vert \nabla \chi ( x ) \vert \leq \delta^{- 1}$. Mimicking \eqref{a10} and \eqref{a14}, we have
\begin{align}
\Vert \nabla u \Vert^{2} &\leq \Vert ( \CP - i \lambda ) u \Vert \Vert u \Vert ,  \label{a52} \\
\< u , \chi ( \CV - \lambda ) u \> &\leq \Vert ( \CP - i \lambda ) u \Vert \Vert u \Vert + \delta^{- 1} \Vert ( \CP - i \lambda ) u \Vert^{1 / 2} \Vert u \Vert^{3 / 2} .  \label{a53}
\end{align}
We decompose
\begin{equation} \label{a95}
\Vert u \Vert^{2} = \CJ_{1} + \CJ_{2} \quad \text{where} \quad \CJ_{1} = \int_{\CM_{\lambda}} \vert u ( x ) \vert^{2} d x \quad \text{and} \quad \CJ_{2} = \int_{\R^{d} \setminus \CM_{\lambda}} \vert u ( x ) \vert^{2} d x .
\end{equation}

We estimate $\CJ_{1}$ following the arguments of Section \ref{s5}. Then, we use the spherical coordinates $x = r \theta$ and work near $\theta_{0} \in \S^{d - 1}$. We first assume that $v ( \theta_{0} ) \neq 0$. The discussions and estimates of Case 2.1 of Section \ref{s1} still hold in the present case. More precisely, the changes of variables \eqref{d1} is now global on $\R^{d} \setminus \{ 0 \}$ since $\xi ( r , \theta ) = 0$ for $\CV$. Working as in \eqref{d8}, we get that a point $r \theta \in \CM_{\lambda}$  with $\theta$ near $\theta_{0}$ satisfies
\begin{equation*}
\vert \widehat{r}  - \lambda^{1 / \alpha} \vert \leq C \delta + C \rho \lambda^{\frac{1 - \alpha}{\alpha}}  \leq 2 C \delta ,
\end{equation*}
for $\lambda$ large enough. Here, $\widehat{x} = \widehat{r} \widehat{\theta}$ are the spherical coordinates of $x$ after the change of variables \eqref{d1}. Eventually, the proof of \eqref{b23} yields
\begin{equation} \label{a76}
\int_{x \in \CM_{\lambda} \text{ and } \theta \text{ near } \theta_{0}} \vert u ( x ) \vert^{2} d  x \leq C \delta \Vert u \Vert \Vert \nabla u \Vert + C \delta \Vert u \Vert^{2} .
\end{equation}
We now treat the case $v ( \theta_{0} ) = 0$. The geometric setting is similar to Case 2.2 of Section \ref{s1} and we can recycle some arguments of this part. In the variables $\widetilde{x}$ of \eqref{d13} which are now global since $\xi ( r , \theta ) = 0$, we have $\CV ( \widetilde{x} ) = \widetilde{x}_{2} \widetilde{x}_{1}^{\alpha - 1}$. As in \eqref{d24}, it implies that a point $x = r \theta \in M_{\lambda}$ with $\theta$ near $\theta_{0}$ satisfies $\vert  \widetilde{x}_{2}  - \lambda \widetilde{x}_{1}^{1 - \alpha} \vert \leq \rho \widetilde{x}_{1}^{1 - \alpha}$. Following the proof of \eqref{d31}, one can show that a point $x = r \theta \in \CM_{\lambda}$  with $\theta$ near $\theta_{0}$ satisfies
\begin{equation} \label{a77}
\vert  \widetilde{x}_{2}  - \lambda \widetilde{x}_{1}^{1 - \alpha} \vert \leq  C \delta + C \rho \widetilde{r}^{1 - \alpha} \leq 2 C \delta ,
\end{equation}
for $\lambda$ large enough. Here, we use $\widetilde{r} = r \geq \lambda^{1 / \alpha} / C$ and then $\rho \widetilde{r}^{1 - \alpha} \leq ( C \rho \delta^{- 1} \lambda^{\frac{1 - \alpha}{\alpha}} ) \delta \leq \delta$ for $\lambda$ large enough. Therefore, working as in \eqref{b27}, we obtain
\begin{equation} \label{a78}
\int_{x \in \CM_{\lambda} \text{ and } \theta \text{ near } \theta_{0}} \vert u ( x ) \vert^{2} d  x \leq C \delta \Vert u \Vert \Vert \nabla u \Vert + C \delta \Vert u \Vert^{2} .
\end{equation}
Summing up, \eqref{a52}, \eqref{a76} and \eqref{a78} imply
\begin{equation} \label{a79}
\CJ_{1} \leq C \delta \Vert ( \CP - i \lambda ) u \Vert \Vert u \Vert + C \delta \Vert u \Vert^{2} .
\end{equation}

It remains to estimate $\CJ_{2}$. Using that $\chi ( x ) ( \CV ( x ) - \lambda ) \geq 0$ for all $x \in \R^{d}$ and $\chi ( x ) ( \CV ( x ) - \lambda ) = \vert \CV ( x ) - \lambda \vert \geq \rho $ for all $x \in \R^{d} \setminus \CM_{\lambda}$, \eqref{a53} gives
\begin{align}
\CJ_{2} &\leq \rho^{- 1} \< u , \chi ( V - \lambda ) u \>   \nonumber  \\
&\leq \rho^{- 1} \Vert ( \CP - i \lambda ) u \Vert \Vert u \Vert + \rho^{- 1}  \delta^{- 1} \Vert ( \CP - i \lambda ) u \Vert^{1 / 2} \Vert u \Vert^{3 / 2}   \nonumber  \\
&\leq \big( \rho^{- 1} + \rho^{- 2} \delta^{- 2} \big) \Vert ( \CP - i \lambda ) u \Vert \Vert u \Vert + \frac{1}{4} \Vert u \Vert^{2} .  \label{a80}
\end{align}

Combining \eqref{a95} together with \eqref{a79} and \eqref{a80}, we get
\begin{equation*}
\Vert u \Vert^{2} \leq \big( C \delta + \rho^{- 1} + \rho^{- 2} \delta^{- 2} \big) \Vert ( \CP - i \lambda ) u \Vert \Vert u \Vert + \Big( C \delta + \frac{1}{4} \Big) \Vert u \Vert^{2} .
\end{equation*}
It implies that, for all $M \geq 1$, one can choose $\delta > 0$ small enough and then $\rho \geq 1$ large enough such that
\begin{equation*}
\Vert ( \CP - i \lambda )^{-1} \Vert \leq 2^{- 1} M^{-1} ,
\end{equation*}
for $\lambda \geq 1$ large enough. Using $( \CP - z )^{- 1} = ( \CP - i \lambda )^{- 1} ( 1 - ( z - i \lambda ) ( \CP - i \lambda )^{- 1 } )^{- 1}$ and $\Vert \partial_{x} ( \CP - i \lambda )^{- 1} \Vert^{2} \leq \Vert ( \CP - i \lambda )^{- 1} \Vert$ from \eqref{a52}, we obtain Proposition \ref{a47} $i)$ for $\vert \re z \vert \leq M$. On the other hand, the maximal accretivity implies $\Vert ( \CP - z )^{- 1} \Vert \leq M^{-1}$ for $\re z \leq - M$ and Proposition \ref{a47} $i)$ follows in that case.

Let $K > 0$. Since $\CP$ has discrete spectrum in $B ( 0 , 2 K )$ from Proposition \ref {a46}, there exist $C , N > 0$ such that
\begin{equation} \label{a56}
\Vert ( \CP - z )^{- 1} \Vert \leq C + \frac{C}{\dist ( z , \sigma ( \CP ) )^{N}} ,
\end{equation}
for all $z \in B ( 0 , K ) \setminus \sigma ( \CP )$. Moreover, an integration by parts and the Cauchy--Schwarz inequality show that
\begin{equation*}
\Vert \partial_{x} u \Vert^{2} = \re \< u , \CP u \> \leq \Vert \CP u \Vert \Vert u \Vert \leq \Vert ( \CP - z ) u \Vert^{2} + C \Vert u \Vert^{2} ,
\end{equation*}
for all $z \in B ( 0 , K )$. It gives $\Vert \partial_{x} ( \CP - z )^{- 1} \Vert \leq 1 + \sqrt{C} \Vert ( \CP - z )^{- 1} \Vert$. Combined with \eqref{a56}, this implies $ii)$. This completes the proof of the Proposition \ref{a47}.


\begin{thebibliography}{10}

\bibitem{AgCo71_01}
J.~Aguilar and J.-M. Combes, \emph{A class of analytic perturbations for
  one-body {S}chr\"odinger {H}amiltonians}, Comm. Math. Phys. \textbf{22}
  (1971), 269--279.

\bibitem{AlBeNo22_01}
D.~Albritton, R~Beekie, and M.~Novack, \emph{Enhanced dissipation and
  {H}\"ormander's hypoellipticity}, J. Funct. Anal. \textbf{283} (2022), no.~3.

\bibitem{Al08_01}
Y.~Almog, \emph{The stability of the normal state of superconductors in the
  presence of electric currents}, SIAM J. Math. Anal. \textbf{40} (2008),
  no.~2, 824--850.

\bibitem{AlGrHe18_01}
Y.~Almog, D.~S. Grebenkov, and B.~Helffer, \emph{Spectral semi-classical
  analysis of a complex {S}chr\"odinger operator in exterior domains}, J. Math.
  Phys. \textbf{59} (2018), no.~4.

\bibitem{AlGrHe19_01}
Y.~Almog, D.~S. Grebenkov, and B.~Helffer, \emph{On a {S}chr\"odinger operator with a purely imaginary potential
  in the semiclassical limit}, Comm. Partial Differential Equations \textbf{44}
  (2019), no.~12, 1542--1604.

\bibitem{AlHe20_01}
Y.~Almog and B.~Helffer, \emph{The spectrum of a {S}chr\"odinger operator in a
  wire-like domain with a purely imaginary degenerate potential in the
  semiclassical limit}, M\'em. Soc. Math. Fr. (N.S.) (2020), no.~166, vi+94.

\bibitem{AlHe16_01}
Y.~Almog and R.~Henry, \emph{Spectral analysis of a complex {S}chr\"odinger
  operator in the semiclassical limit}, SIAM J. Math. Anal. \textbf{48} (2016),
  no.~4, 2962--2993.

\bibitem{Ar25_01}
V.~Arnaiz, J.-F. Bony, and L.~Michel, \emph{Asymptotic distribution of
  eigenvalues for a class of $\mathscr{PT}$-symmetric {S}chrödinger
  operators}, in preparation.

\bibitem{Ar76_01}
V.~I. Arnold, \emph{Local normal forms of functions}, Invent. Math. \textbf{35}
  (1976), 87--109.

\bibitem{AvFrHeRa25_01}
M.~Averseng, N.~Frantz, F.~H\'erau, and N.~Raymond, \emph{Semiclassical
  tunneling for some 1{D} {S}chr\"odinger operators with complex-valued
  potentials}, arXiv:2510.04296 (2025).

\bibitem{BeHeHeRo15_01}
K.~Beauchard, B.~Helffer, R.~Henry, and L.~Robbiano, \emph{Degenerate parabolic
  operators of {K}olmogorov type with a geometric control condition}, ESAIM
  Control Optim. Calc. Var. \textbf{21} (2015), no.~2, 487--512.

\bibitem{BeCoZ17_01}
J.~Bedrossian and M.~Coti~Zelati, \emph{Enhanced dissipation, hypoellipticity,
  and anomalous small noise inviscid limits in shear flows}, Arch. Ration.
  Mech. Anal. \textbf{224} (2017), no.~3, 1161--1204.

\bibitem{Br11_01}
H.~Brezis, \emph{Functional analysis, {S}obolev spaces and partial differential
  equations}, Universitext, Springer, New York, 2011.

\bibitem{CoDuSe83}
J.-M. Combes, P.~Duclos, and R.~Seiler, \emph{Kre\u{\i}n's formula and
  one-dimensional multiple-well}, J. Funct. Anal. \textbf{52} (1983), no.~2,
  257--301.

\bibitem{CoZDr21_01}
M.~Coti~Zelati and T.~D. Drivas, \emph{A stochastic approach to enhanced
  diffusion}, Ann. Sc. Norm. Super. Pisa Cl. Sci. (5) \textbf{22} (2021),
  no.~2, 811--834.

\bibitem{CoGa23_01}
M.~Coti~Zelati and T.~Gallay, \emph{Enhanced dissipation and {T}aylor
  dispersion in higher-dimensional parallel shear flows}, J. Lond. Math. Soc.
  (2) \textbf{108} (2023), no.~4, 1358--1392.

\bibitem{Da99_01}
E.~B. Davies, \emph{Pseudo-spectra, the harmonic oscillator and complex
  resonances}, R. Soc. Lond. Proc. Ser. A Math. Phys. Eng. Sci. \textbf{455}
  (1999), no.~1982, 585--599.

\bibitem{DeSjZw04_01}
N.~Dencker, J.~Sj\"ostrand, and M.~Zworski, \emph{Pseudospectra of
  semiclassical (pseudo-) differential operators}, Comm. Pure Appl. Math.
  \textbf{57} (2004), no.~3, 384--415.

\bibitem{DiSj99_01}
M.~Dimassi and J.~Sj{\"o}strand, \emph{Spectral asymptotics in the
  semi-classical limit}, London Mathematical Society Lecture Note Series, vol.
  268, Cambridge University Press, 1999.

\bibitem{DrRe81_01}
P.~G. Drazin and W.~H. Reid, \emph{Hydrodynamic stability}, second ed.,
  Cambridge Mathematical Library, Cambridge University Press, Cambridge, 2004,
  With a foreword by John Miles.

\bibitem{He88_01}
B.~Helffer, \emph{Semi-classical analysis for the {S}chr\"odinger operator and
  applications}, Lecture Notes in Mathematics, vol. 1336, Springer-Verlag,
  1988.

\bibitem{He13_01}
B.~Helffer, \emph{Spectral theory and its applications}, Cambridge Studies in
  Advanced Mathematics, vol. 139, Cambridge University Press, Cambridge, 2013.

\bibitem{HeSj84_01}
B.~Helffer and J.~Sj\"{o}strand, \emph{Multiple wells in the semiclassical
  limit. {I}}, Comm. Partial Differential Equations \textbf{9} (1984), no.~4,
  337--408.

\bibitem{HeSj10_01}
B.~Helffer and J.~Sj\"{o}strand, \emph{From resolvent bounds to semigroup
  bounds}, arXiv:1001.4171 (2010).

\bibitem{HeSj21_01}
B.~Helffer and J.~Sj\"{o}strand, \emph{Improving semigroup bounds with resolvent estimates}, Integral
  Equations Operator Theory \textbf{93} (2021), no.~3, Paper No. 36, 41.

\bibitem{He13_02}
R.~Henry, \emph{Spectre et pseudospectre d’op\'erateurs non-autoadjoints},
  Th\`ese Universit\'e Paris Sud (2013).

\bibitem{HeHiSj08_02}
F.~H{\'e}rau, M.~Hitrik, and J.~Sj{\"o}strand, \emph{Tunnel effect for
  {K}ramers-{F}okker-{P}lanck type operators}, Ann. Henri Poincar\'e \textbf{9}
  (2008), no.~2, 209--274.

\bibitem{Ka76_01}
T.~Kato, \emph{Perturbation theory for linear operators}, second ed.,
  Grundlehren der Mathematischen Wissenschaften, vol. 132, Springer, 1976.

\bibitem{Ma26_01}
S.~Malkov, \emph{Harmonic approximation and resolvent estimates for
  non-self-adjoint operators}, arXiv:2601.17643 (2026).

\bibitem{MaRo88_01}
A.~Martinez and M.~Rouleux, \emph{Effet tunnel entre puits d\'eg\'en\'er\'es},
  Comm. Partial Differential Equations \textbf{13} (1988), no.~9, 1157--1187.

\bibitem{MoRaVu23_01}
L.~Morin, N.~Raymond, and S.~V{\~u}~Ng{\d{o}}c, \emph{Eigenvalue asymptotics
  for confining magnetic {S}chr\"odinger operators with complex potentials},
  Int. Math. Res. Not. IMRN (2023), no.~17, 14547--14593.

\bibitem{Sh02_01}
K.~C. Shin, \emph{On the reality of the eigenvalues for a class of {$\mathscr
  P\mathscr T$}-symmetric oscillators}, Comm. Math. Phys. \textbf{229} (2002),
  no.~3, 543--564.

\bibitem{Si75_01}
Y.~Sibuya, \emph{Global theory of a second order linear ordinary differential
  equation with a polynomial coefficient}, North-Holland Math. Stud., vol.~18,
  Elsevier, Amsterdam, 1975 (English).

\bibitem{Si83_01}
B.~Simon, \emph{Semiclassical analysis of low lying eigenvalues. {I}.
  {N}ondegenerate minima: asymptotic expansions}, Ann. Inst. H. Poincar\'e{}
  Sect. A (N.S.) \textbf{38} (1983), no.~3, 295--308.

\bibitem{Sj19_01}
J.~Sj\"{o}strand, \emph{Non-self-adjoint differential operators, spectral
  asymptotics and random perturbations}, Pseudo-Differential Operators. Theory
  and Applications, vol.~14, Birkh\"{a}user/Springer, Cham, 2019.

\bibitem{Tr97_01}
L.~Trefethen, \emph{Pseudospectra of linear operators}, SIAM Rev. \textbf{39}
  (1997), no.~3, 383--406.

\bibitem{We21_01}
D.~Wei, \emph{Diffusion and mixing in fluid flow via the resolvent estimate},
  Sci. China Math. \textbf{64} (2021), no.~3, 507--518.

\bibitem{Wh34_01}
H.~Whitney, \emph{Functions differentiable on the boundaries of regions}, Ann.
  of Math. \textbf{35} (1934), no.~3, 482--485.

\end{thebibliography}

\bibliographystyle{amsplain}

\end{document}